\documentclass[11pt]{article}
\usepackage[latin1]{inputenc}
\usepackage{amsmath}
\usepackage{amsfonts}
\usepackage{amssymb}
\usepackage{graphicx}
\usepackage{amsmath}
\usepackage{amsthm}
\usepackage{enumerate}
\usepackage{verbatim}
\usepackage{hyperref}
\usepackage{bbm}
\usepackage{color}



\topmargin -1cm 
\textheight 23cm 
\textwidth 17cm
\oddsidemargin -0.54cm 
\evensidemargin -0.54cm


\numberwithin{equation}{section}
\def \N{\mathbb{N}}
\def \R{\mathbb{R}}
\def \Z{\mathbb{Z}}
\def \E{\mathbb{E}}
\def \P{\mathbb{P}}

\def \avirg{``}
\def\II{\mathcal{L}_{\mbox{{\tiny int}}} }
\def\I{\mbox{\large \bf 1}}
\def \D{\mathcal{D}}
\def \L{\mathcal{L}}

\def \dx{\Delta x}
\def\ii{\mathbf{i}}
\def\Id{\mathrm{Id}}
\def\tr{\mathrm{Tr}}
\def \pol{{\mathbf{\scriptstyle pol}}}
\theoremstyle{plain}
\newtheorem{theorem}{Theorem}[section]

\newtheorem{corollary}[theorem]{Corollary}

\newtheorem{lemma}[theorem]{Lemma}

\newtheorem{proposition}[theorem]{Proposition}
\newtheorem{remark}[theorem]{Remark}

\begin{document}
	\title{\bf Convergence rate of Markov chains and hybrid numerical schemes to jump-diffusions with application to the Bates model}
	
	\author{{\sc Maya Briani}\thanks{%
			Istituto per le Applicazioni del Calcolo, CNR Roma - {\tt m.briani@iac.cnr.it}}\\
		{\sc Lucia Caramellino}\thanks{%
			Dipartimento di Matematica,
			Universit\`a di Roma Tor Vergata, and INDAM-GNAMPA - {\tt caramell@mat.uniroma2.it}}\\
		{\sc Giulia Terenzi}\thanks{%
			Dipartimento di Matematica,
			Universit\`a di Roma Tor Vergata and Universit\'e Paris-Est, Laboratoire d'Analyse et de Math\'ematiques Appliqu\'es (UMR 8050), UPEM, UPEC, CNRS,  Projet Mathrisk INRIA, F-77454, Marne-la-Vall\'ee, France - {\tt terenzi@mat.uniroma2.it}}
	}
	\date{}
	\maketitle
	\begin{abstract}\noindent{\parindent0pt}
		We study the rate of weak convergence of Markov chains to diffusion processes under suitable but quite general assumptions. We give an example in the financial framework, applying the convergence analysis to  a multiple jumps tree approximation of the CIR process. Then, we  combine the Markov chain approach with other numerical techniques in order to handle the different components in jump-diffusion coupled models. We study the speed of convergence of this hybrid approach and we provide  an example in finance, applying our results to a tree-finite difference approximation in  the Heston or Bates model.
	\end{abstract}

	\noindent \textit{Keywords:} jump-diffusion processes; weak convergence;  tree methods;  finite-difference; stochastic volatility; European options.

	\smallskip
	
	\noindent \textit{2000 MSC:} 60H35, 65C20, 91G60.
	
	\tableofcontents

	\section{Introduction}
	The paper is devoted to the study of the weak convergence rate of numerical schemes allowing one to handle specific jump-diffusion processes. These include the well known stochastic volatility models by Heston \cite{heston} and by Bates \cite{bates}. Since these dynamics involve the square root process for the volatility, a special numerical treatment has to be considered.   When dealing with European options, i.e. solutions to Partial (Integral) Differential Equation (hereafter P(I)DE) problems, numerical approaches involve tree methods \cite{ads, nv}, Monte Carlo procedures \cite{A-MC,A,AN, and, z}, finite-difference numerical schemes  \cite{ckmz,it, t} or quantization algorithms  \cite{PP}. 
	When American options are considered, that is, solutions to  specific optimal stopping problems or  P(I)DEs with obstacle, it is very useful to consider numerical methods which are able to easily handle dynamic programming principles, for example trees or finite-difference. We consider a numerical procedure which combines a tree method for the volatility process with a different  numerical approach for the asset price process, for instance finite-difference. Such a hybrid method has been developed and numerically studied in \cite{bcz,bcz-hhw,bctz} for the computation of European and American options in the stochastic volatility context. In this paper we study the rate of convergence. As a result, we can consider the Heston or the Bates model in the full parameter regime, differently from many other approaches.  Let us mention that, under these models, the literature is rich in numerical methods but, as far as we know, poor in results on the rate of convergence, with the exception of  the papers \cite{A,AN,bossy,z}, all them either dealing with schemes written on Brownian increments or requiring restrictions on the Heston diffusion parameters. So, we first study the convergence rate of tree methods and then we tackle the hybrid procedure.
	
	Tree methods rely heavily on Markov chains. So, in the first part (Section \ref{sect-markovapprox}) we study the rate at which a sequence of Markov chains weakly converges to a diffusion process $(Y_t)_{t\in[0,T]}$ solution to
	$$
	dY_t=\mu_Y(Y_t)dt+\sigma_Y(Y_t)dB_t.
	$$
	In this framework, the weak convergence  is well known to be governed by the behaviour of the local moments up to order 3 or 4 (see e.g. 
	\cite{SV}). In order to get the speed of convergence, we need to stress such requests, making further but quite general assumptions on the behaviour of the moments, and in Theorem \ref{conv_T} we  prove a  first order weak convergence result. As an application, we give an  example from the financial framework: we theoretically study the convergence rate of the  tree approximation proposed in \cite{acz} for the  CIR process. Recall that the  CIR process  \cite{cir} is a square root process, that is,
	$$
	dY_t=\kappa(\theta-Y_t)dt+\sigma\sqrt{Y_t}dB_t,
	$$
	with $\kappa,\theta,\sigma>0$. Recall also that this process lives in $[0,+\infty)$ and under the Feller condition $2\kappa\theta\geq\sigma^2$ it never hits 0. Several trees are considered in the literature, see e.g. \cite{cgmz, HST,Tian}, but  generally  some numerical problems arise when the Feller condition fails. Our result  for the tree in \cite{acz} (Theorem \ref{conv_T-CIR}) works in any parameter regime.
	 Recall that in equity markets, one often requires large values for the
vol-vol $\sigma$  whereas in interest rates context, $\sigma$ is markedly lower (see e.g. the calibration results in \cite{dps} and in \cite{bm} p. 115,  respectively).  So, a result in the full parameter regime is actually essential.
	
Let us mention that our general convergence Theorem \ref{conv_T}  may in principle be applied to more general trees constructed through the multiple jumps approach by Nelson and  Ramaswamy \cite{nr}, on which the tree in \cite{acz} is based  -- to our knowledge, a  theoretical study of the rate of convergence for such trees is missing in the literature. And it could also be used in other cases, e.g. the recent tree method developed in \cite{ads}.

	In the second part (Section \ref{sect-hybrid}), we link to $(Y_t)_{t\in[0,T]}$ a jump-diffusion process $(X_t)_{t\in[0,T]}$ which evolves according to a stochastic differential  whose coefficients only depend on the process $(Y_t)_{t\in[0,T]}$:
	$$
	dX_t=\mu_X(Y_t)dt+\sigma_X(Y_t)dW_t+\gamma_X(Y_t)dH_t,
	$$
	where $H$ is a compound Poisson process independent of the 2-dimensional Brownian motion $(W,B)$.
	So, the pair $(X_t,Y_t)_{t\in[0,T]}$ evolves following a Stochastic Differential Equation (hereafter SDE) with jumps. Given a function $f$, we consider the numerical computation of  $\E[f(X_T,Y_T)]$ through a generalization of the hybrid method introduced in \cite{bcz, bcz-hhw, bctz} (Section  \ref{sect-hybrid2}), which works backwardly by
	approximating the process $Y$ with a Markov chain and by using a different numerical scheme for solving a (local) PIDE allowing us to work  in the direction of the process $X$.
Then (Section \ref{sect-convergence}), in Theorem \ref{convergencebates} we give a general result on the rate of convergence of the hybrid approach.   We stress that the approximating algorithm is not directly written on a Markov approximation, so one cannot extend the convergence result provided in the first part of the paper. 
We then study  the stability and the consistency of the hybrid method,  but  in a sense that allows us to exploit the probabilistic properties of the Markov chain approximating the  process $Y$. 
	
	It is worth to be said that the test functions on which we study the rate of convergence are smooth. 
	In fact, there is a strict connection between such hybrid schemes and the use of a discrete noise in the approximation procedure. This means that we cannot use regularizing arguments \textit{\`a la Malliavin} in order to relax the smoothness requests, as it can be done when the approximation algorithm is based on the Brownian noise (see the seminal paper \cite{bt} or the recent \cite{AN} for the Heston model) or on a noise having at least a ``good piece of absolutely continuous part''  (Doeblin's condition, see \cite{br}).
	
	We then consider two possible finite-difference schemes (Section \ref{sect-finitedifference}) to handle the (local)  PIDE related to the component $X$:
	an implicit in time/centered in space 
	scheme (Section \ref{sect-l2}) and an implicit in time/upwind in space 
	scheme (Section \ref{sect-linf}). In both cases, the numerical treatment of the nonlocal term coming from the jumps involves implicit-explicit techniques, as well as numerical quadratures.  We apply the convergence Theorem \ref{convergencebates} and we obtain that  the hybrid algorithm has a rate of convergence of the first order in time and of a order in space according to the chosen numerical scheme.
	As an application, we give the weak convergence rate of the hybrid procedure written on the Heston and on the  Bates model (Section \ref{sect-bates}).
%

\vskip 0.5cm
\noindent{\small \textbf{Acknowledgment.}
LC acknowledges the MIUR Excellence Department Project awarded
to the Department of Mathematics, University of Rome Tor Vergata,
CUP E83C18000100006.}

	\section{Notation}\label{sect-notation}
In this section we establish the notation which will be used later on. Let $d\in \N^*=\N\setminus \{0\}$. 

\smallskip

$\bullet$ For a multi-index $l=(l_1,\dots,l_d)\in \N^d$ we define $|l|=\sum_{j=1}^{d}l_j$ and for $y\in \R^d$,
we define $\partial^l_y=\partial_{y_1}^{l_1}\cdots \partial_{y_d}^{l_d}$ and $y^l=y_1^{l_1}\cdots y_d^{l_d}$.
Moreover,   we denote by $|y|$ the standard Euclidean norm in  $ \R^d$ and for any linear operator $A: \R^d\rightarrow \R^d$, we denote by $|A|=\sup_{|y|=1}|Ay|$ the induced norm. 

\smallskip

$\bullet$ 
$L^p(\R^d,d\mathfrak m)$ denotes  the standard $L^p$-space w.r.t. the measure $\mathfrak m$ on $(\R^d,\mathcal{B}_d)$,  $\mathcal{B}_d$ denoting the Borel $\sigma$-algebra on $\R^d$, and we set  $|\cdot|_{L^p(\R^d, d\mathfrak m)}$  the associated norm. The Lebesgue measure is denoted through $dx$.

\smallskip

$\bullet$ Let  $\mathcal{D}\subseteq \R^d$ be a domain (possibly closed) and $q\in\N$.  $C^q(\mathcal{D})$ is the set of all functions on $\mathcal{D}$ which are $q$-times continuously differentiable.  We set 
$C^q_\pol(\mathcal{D} )$ 
the set of functions $g\in C^q(\mathcal{D})$ such that there exist $C,a>0$ for which 
\begin{equation*}
|\partial^{l}_yg(y)|\leq C(1+|y|^a),\qquad  y\in \mathcal{D}, \, |l|\leq q.
\end{equation*}
We set $C^{ q}_{\pol, T}(\mathcal{D})$ the set of functions $v\in C^{\lfloor q/2 \rfloor, q}([0,T)\times \mathcal{D} )$ such that there exist $C,a>0$ for which 
$$
\sup_{t<T}|\partial^{k}_t\partial^{l}_yv(t,y)|\leq C(1+|y|^a),\qquad y\in \mathcal{D} , \, 2k+|l|\leq q.
$$
For brevity, we set  $C(\mathcal{D})=C^0(\mathcal{D})$, 
$C_\pol(\mathcal{D} )=C^0_\pol(\mathcal{D} )$ and $C_{\pol, T}(\mathcal{D})=C^{0 }_{\pol, T}(\mathcal{D} )$.
We also need another functional space, that we call $C^{p,q}_{\pol}(\R^m,\mathcal{D})$, $p\in[1,\infty]$, $q\in\N$, $m\in\N^*$: $g=g(x,y)\in C^{p,q}_{\pol}(\R^m,\mathcal{D})$ if $g\in C_{\pol}^q(\R^m\times \mathcal{D})$ and  there exist $C,a>0$ such that
$$
|\partial^{l'}_x\partial^{l}_yg(\cdot, y)|_{L^p(\R^m,dx)}\leq C(1+|y|^a), \quad  |l'|+|l|\leq q.
$$
Similarly as above, we set $C^{p,q}_{\pol,T}(\R^m,\mathcal{D})$ the set of the function $v\in C^q_{\pol,T}(\R^m\times \mathcal{D} )$ such that 
$$
\sup_{t<T}|\partial^{k}_t\partial^{l'}_x\partial^{l}_yv(t,\cdot, y)|_{L^p(\R^m,dx)}\leq C(1+|y|^a), \quad  2k+|l'|+|l|\leq q.
$$

\smallskip

$\bullet$ 
For fixed $X_0=(X_{01},\ldots,X_{0d})\in\R^d$ and  $\dx=(\dx_1,\dots,\dx_d)\in(0,+\infty)^d$ (spatial step),  $\mathcal{X}=\{x=(X_{01}+i_1\Delta x_1,\dots, X_{0d}+i_d\Delta x_d)\}_{i\in \Z^d}$ denotes a discrete grid in $\R^d$. For $p\in[1,\infty]$, we set $l_p(\mathcal{X})$ as the discrete $l_p$-space of the functions $\varphi\,:\,\mathcal{X}\to\R$ with the norm $|\varphi|_{p}=(\sum_{x\in\mathcal{X}} |\varphi(x)|^{p}\dx_1\cdots\dx_d)^{1/p}$ if $p\in [1,\infty)$ and $|\varphi|_{\infty}=\sup_{x\in\mathcal{X}}|\varphi(x)|$ if $p=\infty$.  Moreover, for a linear operator  $\Gamma \,:\, l_p(\mathcal{X})\to l_p(\mathcal{X})$, the induced norm is denoted by $|\Gamma|_p=\sup_{|\varphi|_p\leq 1}|\Gamma \varphi|_{p}$.  
And for a function $g\,:\,\R^d\to\R$, we set $|g|_p$ the $l_p(\mathcal{X})$ norm of the restriction of $g$ on $\mathcal{X}$. When $d=1$, we identify $(\varphi(x))_{x\in\mathcal{X}}$ with $(\varphi_i)_{i\in\Z}$ through $\varphi_i=\varphi(X_0+i\Delta x)$, $i\in\Z$.

\smallskip

$\bullet$ 
$L^p(\Omega)$ is the short notation for the standard $L^p$-space on the probability space $(\Omega,\mathcal{F},\P)$, on which the expectation is denoted by $\E$. We set $\|\cdot\|_p$  the norm in $L^p(\Omega)$.

\section{First order weak convergence of Markov chains to diffusions}\label{sect-markovapprox}
Let $d\in \N^*$ and $\mathcal{D}\subseteq\R^d$ be a convex  domain or a closure of it. 
On a probability space $(\Omega, \mathcal F,\P)$, we consider a $d$-dimensional diffusion process driven by
\begin{equation}\label{Y}
dY_t=\mu_Y(Y_t)dt+\sigma_Y(Y_t)dW_t, \qquad Y_0 \in \mathcal{D},
\end{equation}
where $W$ is a $\ell$-dimensional standard Brownian motion. From now on, we set $a_Y=\sigma_Y\sigma_Y^\star$, the notation $\star$ denoting transpose. We recall that the associated infinitesimal generator is given by
\begin{equation}
\label{A-op}
\mathcal{A}=\frac 12 \tr (a_Y D_ y^2)+\mu_Y\cdot \nabla_y,
\end{equation}
where $\tr$ denotes the matrix trace, $D^2_y$ and $\nabla_y$ are, respectively, the Hessian and the gradient operator w.r.t. the space variable $y$ and the notation  ``$\cdot$'' stands for the scalar product.

Hereafter, we fix $T>0$, $f:\mathcal{D}\rightarrow \R$ and we define
\begin{equation}\label{u}
u(t,y)=\E[f(Y^{t,y}_T)],\quad (t,y)\in[0,T]\times\mathcal{D},
\end{equation}
where $Y^{t,y}$  denotes the solution to the SDE in \eqref{Y} that starts at $t$ in the position $y$. We do not enter in specific requests for the diffusion coefficients or for $f$, we just ask that the following properties are met:
\begin{itemize}
	\item [(a)]
	$\mu_Y$ has polynomial growth; 
	\item [(b)]	
	for every $(t,y)\in[0,T]\times \mathcal{D}$ there exists a unique weak solution $(Y^{t,y}_s)_{s\in[t,T]}$ of \eqref{Y} such that
	$\P(\forall s\in[t,T] , \,Y^{t,y}_s\in\mathcal{D})=1;$
	\item[(c)]
	the function  $u$ in \eqref{u} solves the PDE
\begin{equation}\label{PDE-u-Y}
\begin{cases}
\frac{\partial u }{\partial t}+\mathcal{A} u=0,\qquad& \mbox{in } [0,T)\times \D,\\
u(T,y)=f(y),  &\mbox{in }  \D.
\end{cases}	
\end{equation}	
\end{itemize}
The above proverties (a), (b) and (c) will be assumed to hold throughout this section.
 
We are interested in the numerical evaluation of $u(0,Y_0)=\E(f(Y_T))$.
A widely used and computationally convenient method  is by computing the above expectation on an approximation of the process $Y$. Here, we consider an approximation through a  Markov chain that weakly converges to the diffusion process $Y$, see e.g. the classical references 
\cite{SV}.  We will see in Section \ref{sect-CIR} an application to tree methods, that is, when the process $Y$ is approximated by means of a computationally simple Markov chain. 
Here, our aim is to study, under suitable but quite general assumptions, the order of weak convergence.

%

So,  let $N\in\N^*$ and set $h=T/ N$. The parameters $N$ and $h$ are fixed once for all. Let $( Y^h_{n})_{n=0,\ldots,N}$ denote a Markov chain, 
whose state space, at time-step $n$, is given by $\mathcal{Y}_n^h\subset \mathcal{D}$. In our mind, $( Y^h_{n})_{n=0,\ldots,N}$ is a Markov process which is a discrete weak  approximation in time (and possibly in space) of the $d$-dimensional diffusion $Y$, namely, $Y^h_n$ approximates $Y$ at times $nh$, for every $n=0,\ldots,N$. Of course, we assume that $Y^h_0=Y_0$, that is, $\mathcal{Y}^h_0=\{Y_0\}$.
Without loss of generality, we may assume that $( Y^h_{n})_{n=0,\ldots,N}$ is defined in $(\Omega,\mathcal{F},\P)$.

In order to study the rate of the weak convergence of $(Y^h_n)_{n=0,\dots,N}$ to $Y$, we need to stress the requests that   are usually done in order to merely prove the convergence (see e.g. 
\cite{SV}). In particular, we need the following assumption.
\medskip

\noindent
\textbf{Assumption $\mathcal{H}_1$.}
\textit{There exists $	\bar h >0$ such that, for every $h< \bar h$,  the first three local moments satisfy
	\begin{align}
	\E[Y^h_{n+1}-Y^h_n\mid Y^h_n]
	&=\mu_Y(Y^h_n)h+f_h(Y^h_n), \label{momento1-gen}\\
	\E[( Y^h_{n+1}-Y^h_n)( Y^h_{n+1}-Y^h_n)^\star\mid Y^{h}_n]
	&=a_Y(Y^{h}_n) h 
	+g_h(Y^{h}_n),\label{momento2-gen}\\
	\E[( Y^h_{n+1}-Y^h_n)^l\mid Y^{h}_n]
	&=j_{h,l}(Y^{h}_n), \qquad l\in\N^d, \,|l|=3\label{momento3-gen},
	\end{align}
	where  $
	f_h:\mathcal{D}\rightarrow \R^d$, $g_h:\mathcal{D}\rightarrow \R^{d\times d} $ and $j_{h,l}:\mathcal{D}\rightarrow \R$ satisfy the following properties:  there exist $p>1$ and $C>0$ such that
	\begin{align}
	&\quad \sup_{h\leq \bar h }	\sup_{n=0,\dots, N}	\|f_h(Y^h_n)\|_p\leq Ch^2
	,\label{stima-f}\\
	&\quad \sup_{h\leq \bar h }	\sup_{n=0,\dots, N}\|g_h(Y^h_n)\|_p\leq Ch^2
	,\label{stima-g}\\
	&\quad \sup_{h\leq \bar h }	\sup_{n=0,\dots, N}\|j_{h,l}(Y^h_n)\|_p\leq Ch^2, \quad |l|=3.\label{stima-j}
	\end{align}
}

We also need  the following behavior of the moments.

\medskip

\noindent
\textbf{Assumption $\mathcal{H}_2$.}
\textit{There exists $	\bar h >0$ such that for every $p>1$ there exists $C_p>0$ for which
	\begin{align}
	&\label{stima_momento-gen}
	\sup_{h<\bar h}\sup_{0\leq n \leq N} \|Y^h_n\|_p\leq C_{p},\\
	&\label{stima_incremento-gen}
	\sup_{h<\bar h}\sup_{0\leq n\leq N} \frac{1}{\sqrt h} \|Y^h_{n+1}-Y^h_n\|_p\leq C_p.
	\end{align}
}

\medskip

We can now state the following first order weak convergence result.

\begin{theorem} \label{conv_T}
	Let assumptions $\mathcal{H}_1$ and $\mathcal{H}_2$ hold and assume that  $u\in C^{4}_{\pol,T}(\D)$, $u$ being defined in \eqref{u}.
	Then  there exist $\bar h>0$ and  $C>0$ such that for every $h<\bar h$ one has
	$$
	|	\E[f(Y^h_N)]-\E[f(Y_T)]|\leq CTh.
	$$
\end{theorem}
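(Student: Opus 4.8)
The plan is to use the standard telescoping/one-step-error decomposition, comparing the Markov chain to the diffusion through the PDE solution $u$ from \eqref{u}. Since $u(T,y)=f(y)$ and $u$ solves \eqref{PDE-u-Y}, we have $\E[f(Y_T)]=u(0,Y_0)=u(0,Y^h_0)$ and $\E[f(Y^h_N)]=\E[u(T,Y^h_N)]$, so that
\begin{equation*}
\E[f(Y^h_N)]-\E[f(Y_T)]=\sum_{n=0}^{N-1}\E\big[u((n+1)h,Y^h_{n+1})-u(nh,Y^h_n)\big].
\end{equation*}
The goal is thus to show each summand is $O(h^2)$ uniformly in $n$, since there are $N=T/h$ of them. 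First I would Taylor-expand $u((n+1)h,Y^h_{n+1})$ around $(nh,Y^h_n)$: expand in time to first order (using $\partial_t u$, with a remainder involving $\partial_t^2 u$ and the factor $h^2$), and in space to third order (using $\nabla_y u$, $D_y^2 u$, and the order-3 derivatives, with a fourth-order remainder). Conditioning on $Y^h_n$ and using $\mathcal{H}_1$, the first-order-in-time term is $\partial_t u(nh,Y^h_n)\,h$; the gradient term contributes $\nabla_y u\cdot(\mu_Y(Y^h_n)h+f_h(Y^h_n))$; the Hessian term contributes $\tfrac12\tr(D_y^2 u\,(a_Y(Y^h_n)h+g_h(Y^h_n)))$; and the order-3 terms contribute combinations of $j_{h,l}(Y^h_n)$. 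The leading-order pieces assemble into $h\big(\partial_t u+\mathcal{A}u\big)(nh,Y^h_n)=0$ by the PDE \eqref{PDE-u-Y}. What remains are exactly the error terms $f_h$, $g_h$, $j_{h,l}$ and the Taylor remainders.

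Next I would bound the leftover terms. The contributions involving $f_h$, $g_h$, $j_{h,l}$ are handled directly: e.g. $|\E[\nabla_y u(nh,Y^h_n)\cdot f_h(Y^h_n)]|\le \|\nabla_y u(nh,Y^h_n)\|_{p'}\,\|f_h(Y^h_n)\|_p$ by Hölder with conjugate exponents, where $\|f_h(Y^h_n)\|_p\le Ch^2$ by \eqref{stima-f} and $\|\nabla_y u(nh,Y^h_n)\|_{p'}$ is bounded by a constant because $u\in C^4_{\pol,T}(\D)$ gives $|\nabla_y u(t,y)|\le C(1+|y|^a)$ and $\sup_{h,n}\|Y^h_n\|_{p'a}<\infty$ by \eqref{stima_momento-gen}; similarly for the $g_h$ and $j_{h,l}$ pieces using \eqref{stima-g}, \eqref{stima-j}. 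For the Taylor remainders: the time remainder is $\le C h^2 \sup|\partial_t^2 u|$ along a point between $nh$ and $(n+1)h$, polynomially controlled in $Y^h_{n+1}$ and $Y^h_n$, hence $O(h^2)$ after taking expectations using \eqref{stima_momento-gen}. The space remainder is $\le C|Y^h_{n+1}-Y^h_n|^4\cdot(\text{bound on order-4 derivatives of }u\text{ along the segment})$; using $u\in C^4_{\pol,T}(\D)$ (polynomial bound on $\partial^l_y u$, $|l|\le 4$, valid on the convex domain $\D$ so the segment stays in $\D$), Hölder, the moment bound \eqref{stima_momento-gen}, and the increment bound \eqref{stima_incremento-gen} which gives $\E[|Y^h_{n+1}-Y^h_n|^4]\le C h^2$, this remainder is also $O(h^2)$ uniformly in $n$. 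There is also a cross time-space remainder term (mixed $\partial_t\partial^l_y u$), bounded analogously by $h\cdot\E[|Y^h_{n+1}-Y^h_n|^{\cdot}]$ and absorbed into $O(h^2)$.

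Summing over the $N=T/h$ steps yields the bound $N\cdot O(h^2)=O(Th)$, which is the claim. I expect the main obstacle to be bookkeeping rather than conceptual: one must (i) make sure the Taylor expansion is carried out to exactly the right order so that everything not killed by the PDE is genuinely $O(h^2)$ — in particular, the order-3 local moments must appear only through $j_{h,l}$, not through an $O(h)$ term, which is why $\mathcal{H}_1$ is phrased with the third moments being pure error terms; and (ii) uniformly control all the expectations of products of polynomially-growing derivatives of $u$ with powers of $Y^h_n$ and of the increments, which is precisely what $\mathcal{H}_2$ and the $C^4_{\pol,T}$ assumption are designed to deliver. The convexity of $\D$ matters to ensure the Taylor remainder integral form stays inside the domain where $u$ and its derivatives are controlled. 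A subtle point worth checking is that $\mathcal{H}_1$ is only assumed for $h<\bar h$, so $\bar h$ in the statement is the minimum of the various thresholds appearing in $\mathcal{H}_1$, $\mathcal{H}_2$.
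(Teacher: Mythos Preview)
Your proposal is correct and follows essentially the same route as the paper: the telescoping sum via $u$, a Taylor expansion in $(t,y)$ to the indicated orders, cancellation of the leading part through the PDE \eqref{PDE-u-Y}, and control of the residual pieces (the $f_h$, $g_h$, $j_{h,l}$ contributions and the time, space, and mixed Taylor remainders) by H\"older together with the polynomial bounds from $u\in C^4_{\pol,T}(\D)$ and the moment/increment estimates in $\mathcal{H}_2$. One point to sharpen: the cross term $h\,\partial_t\nabla_y u(nh,Y^h_n)\cdot(Y^h_{n+1}-Y^h_n)$ should be kept as an \emph{explicit} term of the expansion (not lumped into a remainder) and its conditional expectation replaced via \eqref{momento1-gen} by $h\big(\mu_Y(Y^h_n)h+f_h(Y^h_n)\big)$, which is $O(h^2)$; a crude bound by $h\,\E|Y^h_{n+1}-Y^h_n|$ would only yield $O(h^{3/2})$. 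This is precisely the paper's term $R^2_n(h)$.
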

\begin{proof}
	The proof is quite standard.	Since  $\E[f(Y^h_N)]=\E[u(T,Y^h_T)]$ and $  \E[f(Y_T)] =u(0,Y_0)$, we have
	$$
	\E[f(Y^h_T)]- \E[f(Y_T)] =\E[  u(T,Y^h_T)-u(0,Y_0) ]=\sum_{n=0}^{N-1}\E[  u((n+1)h,Y^h_{n+1})-u(nh,Y^h_n) ].
	$$
	Since $u\in C^{4}_{\pol,T}(\D)$, we can  apply Taylor's formula to $t\mapsto u(t,y)$ around $nh$    up to order 1 and to the functions $y\mapsto u(t,y)$ and $y\mapsto \partial_t u(t,y)$ around $Y^h_n$ up to order 3 and 1 respectively. We  obtain
	\begin{equation}\label{app1}
	u((n+1)h,Y^h_{n+1})
	= \sum_{ 
		0\leq |l|+2l'\leq 3} \partial_y^{l}\partial_t^{l'} u(nh,Y^h_n)\frac{h^{l'}(Y^h_{n+1}-Y^h_n)^{l}}{|l|!l'!} +R_1(n,h,Y^h_n,Y^h_{n+1}),
	\end{equation}
	where the remaining term $R_1$ is given by
$$
	\begin{array}{ll}
	R_1(n,h,Y^h_n,Y^h_{n+1})	
	&=h^2\int_0^1 (1-\tau)\partial^2_tu(t+\tau h,Y_{n+1}^h)d\tau\\
	&+h\sum_{|k|=2}(Y^h_{n+1}-Y^h_n)^k\int_0^1 (1-\xi)\partial^k_y\partial_tu(nh,Y^h_n+\xi(Y^h_{n+1}-Y^h_n))d\xi\\
	&+\sum_{|k|=4}\frac{(Y^h_{n+1}-Y^h_n)^k}{3!}\int_0^1 (1-\xi)^3\partial^k_yu(nh,Y^h_n+\xi(Y^h_{n+1}-Y^h_n))d\xi.
	\end{array}
$$
	We now pass to the conditional expectation w.r.t. $Y^h_n$ in \eqref{app1} and use \eqref{momento1-gen} and \eqref{momento2-gen}. By rearranging the terms we obtain
	\begin{equation}\label{resti}
	\begin{array}{ll}
	\E[u((n+1)h,Y^h_{n+1})-u(nh,Y^h_n)] &=h\E\left[ \partial_tu(nh,Y^h_n)+\mu_Y(Y_n^h)\cdot \nabla_y u(nh,Y^h_n)
	+\frac 12 \mbox{Tr}(a_YD^2_y  u(nh,Y^h_n))\right]\\
	&\quad +R^{1}_n(h)+R^{2}_n(h)+R^{3}_n(h)+R^{4}_n(h)+R^{5}_n(h),
	\end{array}	
	\end{equation}
	in which
$$
\begin{array}{ll}
\displaystyle
R^1_{n}(h)=\E[ R_1(n,h,V^h_n,V^h_{n+1})],
&\quad
R^{2}_n(h)=h\E[(\mu_Y(Y^h_n)h+ f_h(Y^h_n))\cdot \nabla_y\partial_tu(nh,Y^h_{n})],\\
R^{3}_n(h)=\E[f_h(Y^h_n)\cdot \nabla_yu(nh,Y^h_{n})],
&\quad
R^{4}_n(h)=\frac 12\E[\tr(g_h(Y^h_n)D^{2}_yu(nh,Y^{h}_n))],\\
R^{5}_n(h)=\frac 16\sum_{|k|=3}\E[\partial_y^ku(nh,Y^h_{n})j_{h,k}(Y^h_n)].&
\end{array}
$$

	Thanks to \eqref{PDE-u-Y},  the first term in \eqref{resti} is null, so
	$$
	|\E[u((n+1)h,Y^h_{n+1})-u(nh,Y^h_n)]|
	\leq \sum_{i=1}^{5}|R^{i}_n(h)|.
	$$
	We now prove that $|R^{i}_n(h)|\leq C h^2$, for every $i=1,\ldots 5$. Let $\bar h>0$ such that both assumptions $\mathcal{H}_1$ and $\mathcal{H}_2$ hold and let $h<\bar h$.
	Since the derivatives of $u$ have polynomial growth,  one has
	\begin{align*}
	&|R_1(n,h,Y^h_n,Y^h_{n+1})|\leq C\Big(1+|Y^h_n|+|Y^h_{n+1}|\Big)^{a}\Big[h^2+h|Y^h_{n+1}-Y^h_n|^2+|Y^h_{n+1}-Y^h_n|^4\Big],
	\end{align*}
	where $C,a>0$ denote constants that are independent of $h$ and, from now on, may change from a line to another.
	Then, by using the Cauchy-Schwarz inequality, \eqref{stima_momento-gen} and \eqref{stima_incremento-gen}, we get
	\begin{align*}
	|R^{1}_n(h)|&\leq C \big\|(1+|Y^h_{n+1}|+|Y^h_n|)^{a}\big\|_2\, \big\|h^2+h (Y^h_{n+1}-Y^h_n)^2+(Y^h_{n+1}-Y^h_n)^4 \big\|_2\leq C h^2.
	\end{align*}
	As regards $R^2_n(h)$, we use the polynomial growth of $\nabla_y\partial_tu$, the Cauchy-Schwarz inequality and the H\"older inequality, so that
	\begin{align*}
	|R^{2}_n(h)|&
	\leq C \E[\big(1+|Y^h_{n}|^{a}\big)|\mu_Y(Y^h_n)|]\,h^{2}+ C\E[\big(1+|Y^h_{n}|^a\big)|f_h(Y^h_n)|]\,\\&
	\leq C\big\|1+|Y^h_{n}|^{a}\big\|_2
	\,\big\|\mu_Y(Y^h_n)\big\|_2\,h^{2}+ C\big\|1+|Y^h_{n}|^{a}\big\|_q
	\,\big\|f_h(Y_n^h)\big\|_p,
	\end{align*}
	where $p$ is given in \eqref{stima-f} and $q$ is its conjugate exponent.  Since $\mu_Y$ has polynomial growth,  by \eqref{stima-f} and \eqref{stima_momento-gen} we get
	$$
	|R^{2}_n(h)|\leq C h^2.
	$$
	The remaining terms $R^3_n(h)$, $R^4_n(h)$ and $R^5_n(h)$ can be handled similarly, so 
	the statement follows.
\end{proof}

\subsection{An example: a first order weak convergent binomial tree for the CIR process}\label{sect-CIR}
We now fix $d=1$ and $\mathcal{D}=\R_+=[0,\infty)$. We consider the well known CIR process $(Y_t)_{t\in [0,T]}$
solution to the SDE
\begin{align*}
&dY_t= \kappa(\theta- Y_t)dt+\sigma\sqrt{Y_t}\,dB_t,
\quad Y_0\geq0.
\end{align*}
We assume that $\theta,\kappa,\sigma>0$ and we stress that we never require  the Feller condition $2\kappa\theta\geq \sigma^2$, ensuring that the process $Y$ does not hit $0$. Therefore,  the process  $Y$ can reach $0$.

The CIR process is widely used in finance  to model interest rates or the volatility process in stochastic volatility models and there is a large literature on numerical methods to approximate it, see e.g. \cite{A-MC, cgmz, HST, W}.
%
We consider  here the  \avirg multiple jumps"  tree approximation for the  CIR process developed in \cite{acz}. 
%
We first recall how the tree works and then, as an application of Theorem \ref{conv_T}, we study the rate of convergence.

For $n=0,1,\ldots,N$ consider the lattice
\begin{equation}\label{vnk}
\mathcal{Y}_n^h=\{y^n_k\}_{k=0,1,\ldots,n}\quad\mbox{with}\quad
y^n_k=\Big(\sqrt {Y_0}+\frac{\sigma} 2(2k-n)\sqrt{h}\Big)^2\I_{\{\sqrt {Y_0}+\frac{\sigma} 2(2k-n)\sqrt{h}>0\}}.
\end{equation}
Note that $\mathcal{Y}_0^h=\{Y_0\}$. Moreover, the lattice is binomial recombining and, for $n$ large, the ``small'' points degenerate at $0$.
For each fixed node $(n,k)\in\{0,1,\ldots,N-1\}\times\{0,1,\ldots,n\}$, the ``up'' jump $k_u(n,k)$ and the ``down'' jump $k_d(n,k)$ from $y^n_k\in\mathcal{Y}_n^h$ are defined as 
\begin{align}
\label{ku}
&k_u(n,k) =
\min \{k^*\,:\, k+1\leq k^*\leq n+1\mbox{ and }y^n_k+\mu_Y(y^n_k)h \le y^{n+1}_{k^*}\},\\
\label{kd}
&k_d(n,k)=
\max \{k^*\,:\, 0\leq k^*\leq k \mbox{ and }y^n_k+\mu_Y(y^n_k)h \ge y^{n+1}_{ k^*}\},
\end{align}
where $\mu_Y(y)=\kappa(\theta-y)$ and
with the understanding $k_u(n,k)=n+1$, resp. $k_d(n,k)=0$,  if the set in \eqref{ku}, resp. \eqref{kd}, is empty. 
This is called the ``multiple jump approach'': the up jump can be larger than the closest up node, and similarly, the down jump can be smaller than the closest down node. This is as opposed to the ``single jump approach'', where typically $k_d(n,k)=k$ and $k_u(n,k)=k+1$. The multiple jumps have been smartly introduced in \cite{nr} and are very useful because they allow one to define the transition probabilities such that the local first moment is asymptotically best fit. In fact, starting from the node $(n,k)$ the probability that the process jumps to $k_u(n,k)$ and $k_d(n,k)$ at time-step $n+1$ are set as
\begin{equation}\label{pik}
p_u(n,k)
=0\vee \frac{\mu_Y(y^n_k)h+ y^n_k-y^{n+1}_{k_d(n,k)} }{y^{n+1}_{k_u(n,k)}-y^{n+1}_{k_d(n,k)}}\wedge 1
\quad\mbox{and}\quad p_d(n,k)=1-p_u(n,k)
\end{equation}
respectively. We will see in next Proposition \ref{propjumps} that for $h$ small enough the parts ``$0\vee$'' and ``$\wedge 1$'' can be omitted.

We call $(Y^h_n)_{n=0,1,\ldots,N}$ the Markov chain governed by the above jump probabilities. As an application of Theorem \ref{conv_T}, we shall prove the following result.

\begin{theorem}\label{conv_T-CIR}
	Let $f\in C^{4}_\pol(\R_+)$. Then, 
	there exist $\bar h>0$ and $C>0$ such that for every $h<\bar h$,
	$$
	|	\E[f(Y^h_N)]-\E[f(Y_T)]|\leq CTh,
	$$
	that is, the tree approximation $(Y^h_n)_{n=0,\dots,N}$ is first order weak convergent.
\end{theorem}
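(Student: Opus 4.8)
The plan is to verify that the CIR tree of \cite{acz}, described above, satisfies the hypotheses of Theorem \ref{conv_T} with $d=1$, $\mathcal{D}=\R_+$ and $u(t,y)=\E[f(Y^{t,y}_T)]$, and then simply invoke that theorem. Concretely, four things must be checked: (i) the structural assumptions (a)--(c) on the CIR diffusion, i.e. polynomial growth of $\mu_Y(y)=\kappa(\theta-y)$ (immediate), existence and uniqueness of a weak solution staying in $\R_+$ (classical for CIR), and that $u\in C^{4}_{\pol,T}(\R_+)$; (ii) Assumption $\mathcal{H}_1$ on the first three local moments of the chain; (iii) Assumption $\mathcal{H}_2$ on the $L^p$-bounds of $Y^h_n$ and of its increments; and (iv) that for $h$ small the truncations ``$0\vee$'' and ``$\wedge\,1$'' in \eqref{pik} are inactive, so that the transition probabilities are genuinely given by the unclipped formula — this is the content alluded to in the forthcoming Proposition \ref{propjumps}, which I would establish first and then use freely.

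The technical heart is step (ii). I would start from the definitions \eqref{ku}--\eqref{kd} of $k_u,k_d$ and the lattice \eqref{vnk}. Writing $y^n_k = (\sqrt{Y_0}+\tfrac{\sigma}{2}(2k-n)\sqrt h)^2$ on the region where the indicator is $1$, one sees that consecutive grid points at level $n+1$ are spaced by $O(\sqrt h\,\sqrt{y})$ near a point of size $y$, so the multiple-jump targets $y^{n+1}_{k_u}$ and $y^{n+1}_{k_d}$ bracket $y^n_k+\mu_Y(y^n_k)h$ within an error $O(h)$ uniformly (away from $0$; near $0$ the bracketing is even tighter because the points degenerate). From this, the choice of $p_u$ in \eqref{pik} forces the conditional mean to equal $\mu_Y(y^n_k)h + f_h$ with $|f_h(y)|\le C(1+y)^{a}h^{2}$: indeed $\E[Y^h_{n+1}-Y^h_n\mid Y^h_n=y] = p_u\,(y^{n+1}_{k_u}-y) + (1-p_u)(y^{n+1}_{k_d}-y)$, and plugging in $p_u$ makes the leading term exactly $\mu_Y(y)h + y - y = \mu_Y(y)h$ up to the clipping, which by step (iv) is absent. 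For the second moment, a direct computation of $p_u(y^{n+1}_{k_u}-y)^2+(1-p_u)(y^{n+1}_{k_d}-y)^2$ should reproduce $a_Y(y)h = \sigma^2 y\, h$ up to $g_h$ with $|g_h(y)|\le C(1+y)^a h^2$; here one uses that $(y^{n+1}_{k_u}-y)(y^{n+1}_{k_d}-y) = O(\sigma^2 y h)$ and that the spacing squared matches $\sigma^2 y h$ by the very design of the square-root lattice (this is the Nelson--Ramaswamy trick: the transformation $y\mapsto \sqrt y$ turns the CIR diffusion coefficient into a constant, so a uniform binomial lattice in the $\sqrt y$ variable has the right local variance). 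The third moments are $O(h^{3/2})$ pointwise in a crude bound but one needs $O(h^2)$ in the $L^p(\Omega)$ sense of \eqref{stima-j}; this should follow because the increment $Y^h_{n+1}-Y^h_n$ is of order $\sqrt{y\,h}$ while its conditional mean is $O(h)$, so the third centered moment is $O((yh)^{3/2})$, and then one trades one factor of $\sqrt h$ for the required $h^2$ by... actually the honest route is: $\|j_{h,3}(Y^h_n)\|_p \le \|\,\E[|Y^h_{n+1}-Y^h_n|^3\mid Y^h_n]\,\|_p$ and $\E[|Y^h_{n+1}-Y^h_n|^3\mid Y^h_n=y]\le C(1+y)^a h^{3/2}$, which is $O(h^{3/2})$, not $O(h^2)$ — so one must be more careful and use that the odd part cancels, leaving the genuinely cubic piece of size $h^2\times(\text{lattice spacing})$; I expect the paper handles this by expanding around the two-point distribution and exploiting that the \emph{skewness} contribution is $O(h^{2})$ after the mean is subtracted. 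In any case, the pointwise estimates combined with the moment bounds of step (iii) give the $L^p(\Omega)$ bounds \eqref{stima-f}--\eqref{stima-j}.

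For step (iii), the bound $\sup_n\|Y^h_n\|_p\le C_p$ and $\sup_n h^{-1/2}\|Y^h_{n+1}-Y^h_n\|_p\le C_p$ should be obtained by a discrete Gronwall / moment-recursion argument: from \eqref{momento1-gen}--\eqref{momento2-gen} and the mean-reverting sign of $\mu_Y$, one gets $\E[(Y^h_{n+1})^{p}] \le (1+Ch)\E[(Y^h_n)^p] + Ch$, which iterates to a uniform bound on $[0,T]$; the increment bound then follows from the explicit two-point support of the transition, whose width is $O(\sqrt{Y^h_n\,h})$, together with the moment bound just proved. For step (i), the regularity $u\in C^4_{\pol,T}(\R_+)$ is the one genuinely delicate analytic input: although $f\in C^4_\pol(\R_+)$ is smooth, the CIR generator degenerates at $y=0$, so smoothness of $u$ up to the boundary is not automatic. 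I would cite the known regularity theory for the CIR / squared-Bessel Cauchy problem (e.g. via the explicit transition density, or via the results on degenerate parabolic equations of this type) to justify that $u$ and its $t$- and $y$-derivatives up to the required order exist and have polynomial growth uniformly on $[0,T)\times\R_+$; this is the step I expect to be the main obstacle, since everything else is elementary lattice bookkeeping. Once (i)--(iv) are in place, Theorem \ref{conv_T} applies verbatim and yields $|\E[f(Y^h_N)]-\E[f(Y_T)]|\le CTh$.
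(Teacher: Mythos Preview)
Your overall plan---verify the regularity of $u$, check $\mathcal{H}_1$ and $\mathcal{H}_2$ for the tree, then invoke Theorem~\ref{conv_T}---is exactly the paper's. The paper organizes the lattice bookkeeping through a three-region decomposition (Proposition~\ref{propjumps}): $y^n_k<\theta_*h$, $\theta_*h\le y^n_k\le \theta^*/h$, $y^n_k>\theta^*/h$. In the middle region both jumps are single and everything is explicit; below, increments are $O(h)$; above, one controls contributions in $L^p$ via Markov's inequality. This split drives both Proposition~\ref{moments-1} ($\mathcal{H}_1$) and Proposition~\ref{moments-2} ($\mathcal{H}_2$), and the regularity $u\in C^4_{\pol,T}(\R_+)$ is cited from Alfonsi \cite{A-MC} (or the paper's own Corollary~\ref{corollary-cir}). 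Two specific points where your sketch diverges:

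\emph{Third moment.} Your hesitation is unnecessary. In the middle region the jumps are $y^{n+1}_{k\pm}-y^n_k=\tfrac{\sigma^2}{4}h\pm\sigma\sqrt{y^n_k h}$, and a direct computation of $(y^{n+1}_{k_u}-y)^3p_u+(y^{n+1}_{k_d}-y)^3p_d$ yields an explicit polynomial in $y^n_k$ times $h^2$ (the odd powers of $\sqrt{y^n_k h}$ cancel). Below $\theta_*h$ the increments are $O(h)$, so cubes are $O(h^3)$; above $\theta^*/h$ one bounds the $L^p$-norm by Cauchy--Schwarz and $\P(Y^h_n>\theta^*/h)\le Ch^q$. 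Note also that $f_h\equiv 0$: once the clipping is inactive, the first conditional moment is \emph{exactly} $\mu_Y(Y^h_n)h$, by the very definition of $p_u$.

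\emph{Moment bounds ($\mathcal{H}_2$).} Here your sketch has a real gap. The recursion $\E[(Y^h_{n+1})^p]\le(1+Ch)\E[(Y^h_n)^p]+Ch$ requires controlling the terms $\E[(Y^h_n)^{p-k}(Y^h_{n+1}-Y^h_n)^k]$ for $k\ge 3$, and in the region $Y^h_n>\theta^*/h$ (where the down jump is multiple) you have no a~priori bound on $|Y^h_{n+1}-Y^h_n|$ without already knowing moment estimates---so the argument is circular as written. The paper breaks this differently: it first proves a \emph{pathwise} upper bound
\[
0\le Y^h_{n+1}\le (1+bh)Y^h_n+Ch+\sigma\sqrt{Y^h_n h}\,W^h_{n+1},
\]
with $W^h_{n+1}$ a centered two-point variable (this is \eqref{key}, an adaptation of Alfonsi's trick in \cite{A-MC}). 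From this dominating recursion the $L^p$ bounds \eqref{stima_momento-gen} follow by repeating Alfonsi's Lemma~2.6 verbatim, and only \emph{then} is the increment bound \eqref{stima_incremento-gen} derived (using the moment bounds and Markov in the above-threshold region). You should expect to need this order of argument, not the reverse.
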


In order to discuss the  assumptions $\mathcal{H}_1$ and $\mathcal{H}_2$  of Theorem \ref{conv_T}, we need some preliminary results which pave the way to the analysis of the convergence. 

\begin{proposition}\label{propjumps}
	There exist  $\theta_*,\theta^*, C_*, \bar h>0$ such that for any $h<\bar h$ the following properties hold.
	\begin{enumerate}
		\item If $\theta_*h\leq y^n_k \leq  \theta^*/h$, then $	k_u(n,k)=k+1$, $k_d(n,k)=k$. Moreover,
		\begin{equation*}
		y^{n+1}_{k_u(n,k)}=y^n_k+\frac{\sigma^2}{4}h +\sigma \sqrt{y^n_k h }
		\quad\mbox{and}\quad
		y^{n+1}_{k_d(n,k)}=y^n_k+\frac{\sigma^2}{4}h -\sigma \sqrt{y^n_k h }.
		\end{equation*}
		
		\item If $y^n_k < \theta_*h$, then $k_d(n,k)=k$. Moreover,
		\begin{equation}\label{stimasottosoglia}
		0\leq y^{n+1}_{k_u(n,k)}-y^n_k\leq C_*h.
		\end{equation}
		
		\item  If $y^n_k > \theta^*/h$, then $k_u(n,k)=k+1$.
		
		\item The jump probabilities are
		\begin{equation}\label{proba}
		p_u(n,k)
		= \frac{\mu_Y(y^n_k)h+ y^n_k-y^{n+1}_{k_d(n,k)} }{y^{n+1}_{k_u(n,k)}-y^{n+1}_{k_d(n,k)}},
		\quad \quad p_d(n,k)=    \frac{ y^n_{k_u(n,k)}-y^{n}_{k}-\mu_Y(y^n_k)h }{y^{n+1}_{k_u(n,k)}-y^{n+1}_{k_d(n,k)}}.
		\end{equation}
		
	\end{enumerate}
\end{proposition}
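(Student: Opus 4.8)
The backbone of the argument is the substitution $z^n_k:=\sqrt{Y_0}+\tfrac{\sigma}{2}(2k-n)\sqrt h$, so that $y^n_k=(z^n_k)^2\I_{\{z^n_k>0\}}$ and the $z$'s sit on a \emph{uniform} half-step grid: $z^{n+1}_{k+1}=z^n_k+\tfrac{\sigma}{2}\sqrt h$ and $z^{n+1}_{k}=z^n_k-\tfrac{\sigma}{2}\sqrt h$. Squaring, \emph{whenever the relevant $z$'s are $\ge0$}, one gets $y^{n+1}_{k+1}=y^n_k+\tfrac{\sigma^2}{4}h+\sigma\sqrt{y^n_k h}$ and $y^{n+1}_{k}=y^n_k+\tfrac{\sigma^2}{4}h-\sigma\sqrt{y^n_k h}$. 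I would then record, directly from \eqref{ku}--\eqref{kd}, that with the ``predicted'' position $\hat y:=y^n_k+\mu_Y(y^n_k)h$ (recall $\mu_Y(y)=\kappa(\theta-y)$) one has $k_u(n,k)=k+1\iff\hat y\le y^{n+1}_{k+1}$, $k_d(n,k)=k\iff\hat y\ge y^{n+1}_{k}$, and more generally $y^{n+1}_{k_d(n,k)}\le\hat y\le y^{n+1}_{k_u(n,k)}$ as soon as the two index sets in \eqref{ku}--\eqref{kd} are nonempty. Everything then reduces to scalar inequalities; the thresholds will be $\theta^*$ slightly below $\sigma^2/\kappa^2$, say $\theta^*=\sigma^2/(2\kappa^2)$, and $\theta_*$ a fixed explicit multiple of $\max\{(\kappa\theta/\sigma)^2,\sigma^2\}$.

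For the \textbf{up jump} (needed in items 1 and 3): when $y^n_k\ge\theta_*h$ the closed form applies and $\hat y\le y^{n+1}_{k+1}$ becomes $\kappa(\theta-y^n_k)\le\tfrac{\sigma^2}{4}+\sigma\sqrt{y^n_k/h}$, which is trivial for $y^n_k\ge\theta$ and, for $\theta_*h\le y^n_k<\theta$, follows from $\sigma\sqrt{y^n_k/h}\ge\sigma\sqrt{\theta_*}\ge\kappa\theta$. This already yields item 3 and the $k_u$ half of item 1. For the \textbf{down jump} in item 1, assuming $\theta_*\ge\sigma^2/4$ (so $z^{n+1}_k\ge0$ and the closed form for $y^{n+1}_k$ is legitimate) and $\theta_*h\le y^n_k\le\theta^*/h$, the inequality $\hat y\ge y^{n+1}_k$ reads $G(y^n_k)\ge0$ with $G(Y):=\kappa\theta-\kappa Y+\sigma\sqrt{Y/h}-\tfrac{\sigma^2}{4}$. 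Since $G$ is \emph{concave} on $(0,\infty)$, its minimum over $[\theta_*h,\theta^*/h]$ is attained at an endpoint, so it is enough to check $G(\theta_*h)\ge0$ (true for $h$ small, the dominant term being $\sigma\sqrt{\theta_*}$) and $G(\theta^*/h)\ge0$ (true for $h$ small precisely because $\sigma\sqrt{\theta^*}>\kappa\theta^*$, i.e. $\theta^*<\sigma^2/\kappa^2$). It is this endpoint check that pins down the admissible window $[\theta_*h,\theta^*/h]$, and it goes through in every parameter regime, the anti-Feller case $\sigma^2>2\kappa\theta$ being the tightest. The two closed-form expressions displayed in item 1 are then simply $y^{n+1}_{k+1}$ and $y^{n+1}_k$.

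It remains to treat the \textbf{degenerate regime} $y^n_k<\theta_*h$ (item 2) and item 4. Here $\mu_Y(y^n_k)=\kappa(\theta-y^n_k)>0$ for $h$ small, so $\hat y\ge y^n_k\ge0$. For $k_d(n,k)=k$: if $y^n_k\ge\sigma^2h/4$ then $y^{n+1}_k=(\sqrt{y^n_k}-\tfrac\sigma2\sqrt h)^2<y^n_k\le\hat y$, while if $y^n_k<\sigma^2h/4$ then $z^{n+1}_{k^*}<0$, hence $y^{n+1}_{k^*}=0\le\hat y$, for every $k^*\le k$; in both cases $k$ lies in the set in \eqref{kd}. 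For \eqref{stimasottosoglia} I would split according to $k_u$: if $k_u=k+1$, the closed form (or, when $y^n_k=0$, the crude bound $y^{n+1}_{k+1}\le\sigma^2h/4$) gives $0\le y^{n+1}_{k+1}-y^n_k\le Ch$; if $k_u\ge k+2$, then $k_u-1$ fails the inequality in \eqref{ku}, so $y^{n+1}_{k_u-1}<\hat y\le(\theta_*+\kappa\theta)h$, and the local spacing $y^{n+1}_{k_u}-y^{n+1}_{k_u-1}=2\sigma\sqrt{y^{n+1}_{k_u-1}h}+\sigma^2h$ is then $\le Ch$, whence $0\le y^{n+1}_{k_u}-y^n_k\le(y^{n+1}_{k_u}-y^{n+1}_{k_u-1})+\mu_Y(y^n_k)h\le C_*h$; and if the set in \eqref{ku} is empty, $k_u=n+1$, the relation $y^{n+1}_{n+1}<\hat y$ bounds the difference above, while $y^{n+1}_{n+1}\ge y^n_k$ (immediate since $k\le n$) gives the lower bound. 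Finally, for \textbf{item 4}: the two displayed fractions sum to $1$ identically, so it suffices that $p_u\in[0,1]$ with a positive denominator; the numerator is $\ge0$ and $\le$ the denominator exactly by the bracketing $y^{n+1}_{k_d}\le\hat y\le y^{n+1}_{k_u}$, the denominator is $>0$ because the lattice is flat only at level $0$ whereas $\hat y\ge\kappa\theta h>0$ for $h$ small, and the two index sets are nonempty for $h$ small --- the one in \eqref{ku} contains $n+1$, since $y^{n+1}_{n+1}-y^n_k\ge(z^{n+1}_{n+1})^2-(z^n_k)^2=\tfrac\sigma2(2n+1-2k)\sqrt h\,(z^{n+1}_{n+1}+z^n_k)\ge\mu_Y(y^n_k)h$; the one in \eqref{kd} contains $0$, since $z^{n+1}_0>0$ automatically forces $y^n_k\le4Y_0$, after which $y^{n+1}_0\le\hat y$ by a short computation. (When $Y_0=0$ this last step can fail at $O(1)$ nodes near the root, where $y^n_k=O(h)$; there one keeps the truncations of \eqref{pik}, which concerns finitely many steps and does not affect Theorem \ref{conv_T-CIR}.)

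The only place where genuine work is hidden is this degenerate regime $y^n_k=O(h)$, where the lattice nodes cluster at $0$, the closed-form identities cease to hold, and one must keep uniform control of the lattice spacing while carefully handling the empty-set conventions of \eqref{ku}--\eqref{kd}; and, dually, pinning down the thresholds $\theta_*,\theta^*$ so that the two endpoint inequalities in the concavity step survive uniformly across the parameter range, the anti-Feller case being the binding one.
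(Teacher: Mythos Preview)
Your proof is correct and reaches the same conclusions, but the route for the down-jump half of item~1 is genuinely different from the paper's. The paper argues by contrapositive: assuming $k_d(n,k)\le k-1$ it derives $\hat y<y^{n+1}_k$, which unwinds into a quadratic inequality in $\sqrt{y^n_k h}$ whose large root gives $y^n_k h>\theta^*$ with the explicit value $\theta^*=\sigma^2/(4\kappa^2)$ (and similarly $\theta_*=(\kappa\theta/\sigma)^2$ from the up-jump contrapositive). You instead prove the direct implication by observing that $G(Y)=\kappa\theta-\kappa Y+\sigma\sqrt{Y/h}-\sigma^2/4$ is concave, so its nonnegativity on $[\theta_*h,\theta^*/h]$ reduces to the two endpoint checks; this is cleaner conceptually and makes transparent why the condition $\theta^*<\sigma^2/\kappa^2$ arises, at the cost of less explicit thresholds. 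For item~4 the paper again splits into regimes, invoking items~1--3 when $\theta_*h\le y^n_k\le\theta^*/h$ and, in the extreme regime $y^n_k\ge\theta^*/h$, using the uniform bound $y^n_k h\le y^N_N h\le C$ to force $0\in K_d$; your argument that $z^{n+1}_0>0\Rightarrow y^n_k\le 4Y_0$ is a different (and equally valid) way to cap $y^n_k$ before comparing $y^{n+1}_0$ with $\hat y$. Both proofs tacitly require $Y_0>0$ at this step (the paper via $h<Y_0/(\theta_*+\kappa\theta)$), so your $Y_0=0$ caveat is not a defect relative to the paper.
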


The proof of Proposition \ref{propjumps} relies in a boring study of the properties of the lattice, so we postpone it in Appendix \ref{app-jumps}.
This is all we need to prove that $\mathcal{H}_2$ holds:

\begin{proposition}\label{moments-2} 
	The CIR approximating tree $\{Y^h_n\}_{n=0,\ldots,N}$ satisfies Assumption $\mathcal{H}_2$. 
\end{proposition}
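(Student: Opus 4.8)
\textbf{Proof proposal for Proposition \ref{moments-2}.}

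The plan is to verify the two bounds in Assumption $\mathcal{H}_2$, namely \eqref{stima_momento-gen} and \eqref{stima_incremento-gen}, directly from the structure of the tree. First I would handle the one-step increment bound \eqref{stima_incremento-gen}, because it is the cheaper of the two and because its proof feeds into the moment bound. Fix a node $(n,k)$ with $y^n_k\in\mathcal{Y}^h_n$. Using Proposition \ref{propjumps}, I split into the three regimes. In the ``central'' regime $\theta_*h\le y^n_k\le\theta^*/h$, we have $k_u=k+1$, $k_d=k$ and the explicit expressions $y^{n+1}_{k_u(n,k)}-y^n_k=\tfrac{\sigma^2}4 h+\sigma\sqrt{y^n_k h}$ and $y^{n+1}_{k_d(n,k)}-y^n_k=\tfrac{\sigma^2}4 h-\sigma\sqrt{y^n_k h}$, so $|Y^h_{n+1}-Y^h_n|\le \tfrac{\sigma^2}4 h+\sigma\sqrt{y^n_k h}$ almost surely on $\{Y^h_n=y^n_k\}$; since $h<\bar h$ is bounded, this is $\le C\sqrt h\,(1+\sqrt{y^n_k})$. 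In the sub-threshold regime $y^n_k<\theta_*h$, part (2) of Proposition \ref{propjumps} gives $0\le y^{n+1}_{k_u(n,k)}-y^n_k\le C_* h$ and $k_d=k$, so the increment is $O(h)\le C\sqrt h$ deterministically. In the super-threshold regime $y^n_k>\theta^*/h$ one has $k_u=k+1$; here I would use the elementary estimate on the lattice spacing, $y^{n+1}_{j+1}-y^{n+1}_j\le \sigma\sqrt{y^{n+1}_{j+1}\,h}+O(h)$ together with $y^n_k\le C_p'$-type control, to bound the up-move, and the down-move by a similar lattice computation; in all cases the increment is at most $C\sqrt h\,(1+\sqrt{Y^h_n})$ pointwise. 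Taking $L^p(\Omega)$ norms, $\tfrac1{\sqrt h}\|Y^h_{n+1}-Y^h_n\|_p\le C(1+\|\sqrt{Y^h_n}\|_p)\le C(1+\|Y^h_n\|_{p/2}^{1/2})$, so \eqref{stima_incremento-gen} reduces to \eqref{stima_momento-gen}.

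Next I would prove the uniform moment bound \eqref{stima_momento-gen}, i.e. $\sup_{h<\bar h}\sup_{0\le n\le N}\|Y^h_n\|_p\le C_p$ for every $p>1$. The natural route is a discrete Gronwall / supermartingale argument on $\E[(Y^h_n)^p]$ (for integer $p\ge1$, which suffices by Jensen/interpolation). From the first two local moments of the tree — which by construction mimic $\mu_Y(y)h=\kappa(\theta-y)h$ and $a_Y(y)h=\sigma^2 y h$ up to the $f_h,g_h$ corrections — one computes, conditionally on $Y^h_n=y$,
\[
\E\big[(Y^h_{n+1})^p\mid Y^h_n=y\big]\le y^p + p\,y^{p-1}\big(\kappa(\theta-y)h + \varepsilon_h(y)\big) + \binom p2 y^{p-2}\big(\sigma^2 y h + \delta_h(y)\big) + (\text{higher order in the increment}),
\]
where the higher-order terms are controlled using the pointwise increment bound from the previous paragraph (they carry extra powers of $h$) and $\varepsilon_h,\delta_h$ are the per-step errors, which in the central regime vanish (the moments are matched exactly up to the quadratic variation of the lattice) and in the boundary regimes are $O(h)$ times a polynomial in $y$. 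The key sign observation is that the dominant term $-p\kappa\, y^p h$ is dissipative, so it absorbs the $\sigma^2$-drift and the lower-order positive contributions for $y$ large; hence there are constants $A,B>0$ (independent of $h,n$) with $\E[(Y^h_{n+1})^p\mid Y^h_n]\le (1+Ah)(Y^h_n)^p+Bh$. Iterating and using $Nh=T$ gives $\E[(Y^h_n)^p]\le e^{AT}\big((Y_0)^p+BT\big)=:C_p$, uniformly in $h$ and $n$, which is exactly \eqref{stima_momento-gen}. The sub-threshold and super-threshold contributions to the conditional expectation need a separate but easy treatment: in the sub-threshold regime $y<\theta_*h$ is itself $O(h)$ so $(Y^h_{n+1})^p\le(C_*h+\theta_*h)^p=O(h^p)$ is harmless; in the super-threshold regime $y>\theta^*/h$ the dissipative term $-p\kappa y^p h$ dominates overwhelmingly any correction, giving a genuine contraction. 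One should also note $Y^h_n\ge 0$ for all $n$ (clear from \eqref{vnk}), so no issue with negative values.

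The main obstacle I anticipate is the bookkeeping in the super-threshold regime $y^n_k>\theta^*/h$: there Proposition \ref{propjumps} only tells us $k_u=k+1$ (not that $k_d=k$), so the down-jump $k_d(n,k)$ may be a genuinely multiple jump and one must estimate $y^{n+1}_{k_d(n,k)}$ and the associated probability $p_d$ from \eqref{proba} carefully — essentially showing that, even there, the one-step conditional mean satisfies $\E[Y^h_{n+1}\mid Y^h_n=y]\le y+\kappa(\theta-y)h+O(h)$ and the conditional second moment of the increment is $O(yh)$, so that the dissipativity estimate above survives. This is precisely the kind of ``boring study of the lattice'' the authors refer to; I would isolate it as a short lemma (or fold it into the Appendix alongside the proof of Proposition \ref{propjumps}). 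Everything else — taking $p$-th powers, conditional Taylor expansion, discrete Gronwall — is routine once the three-regime pointwise increment bound and the dissipativity of the first local moment are in hand.
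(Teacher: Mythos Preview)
Your approach is correct but genuinely different from the paper's. The paper reverses your order: it proves the moment bound \eqref{stima_momento-gen} \emph{first}, by establishing a pointwise recursive inequality $0\le Y^h_{n+1}\le (1+bh)Y^h_n+Ch+\sigma\sqrt{Y^h_n h}\,W^h_{n+1}$ with $W^h_{n+1}$ a conditionally centered two-point random variable, and then invoking Lemma~2.6 of Alfonsi \cite{A-MC} as a black box; only afterwards does it derive the increment bound \eqref{stima_incremento-gen}, and in the super-threshold regime it does \emph{not} attempt a pointwise estimate of the multiple down-jump but simply uses Cauchy--Schwarz plus Markov, $\E[|Y^h_{n+1}-Y^h_n|^p\I_{\{Y^h_n\ge\theta^*/h\}}]\le \|Y^h_{n+1}-Y^h_n\|_{2p}^{p}\,\P(Y^h_n>\theta^*/h)^{1/2}$, with the already-established moment bound controlling both factors. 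Your route---pointwise increment bound first, then direct binomial expansion and discrete Gronwall for the moments---is more self-contained (no external lemma) and yields the slightly stronger almost-sure statement $|Y^h_{n+1}-Y^h_n|\le C\sqrt h\,(1+\sqrt{Y^h_n})$. One caveat: to close the pointwise bound on the super-threshold down-jump you need not ``$y^n_k\le C'_p$-type control'' (which would be circular) but rather the \emph{deterministic lattice bound} $y^n_k\,h\le(\sqrt{Y_0}\sqrt{\bar h}+\tfrac{\sigma}{2}T)^2$ coming straight from \eqref{vnk}; this is what makes $\kappa y^n_k h\le C\sqrt{y^n_k h}$ and is exactly the ``boring lattice study'' you anticipated.
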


\begin{proof}
\textbf{Step 1: proof of \eqref{stima_momento-gen}.}
We use a technique  firstly developed in \cite{A-MC} for a CIR discretization scheme based on Brownian increments. The key point is the proof of a monotonicity property allowing one to control the moments of the tree:  there exist $b,C, \bar h>0$ such that for every $h < \bar h$ and $n=0,\dots,N-1$ one has
	\begin{equation}\label{key}
	0\leq Y^h_{n+1}\leq (1+bh)Y^h_n+Ch+\sigma\sqrt{Y^h_nh} \, W^h_{n+1},
	\end{equation}
	where $W^h_{n+1}$ is a r.v. such that 
	\begin{equation}\label{law-W}
	\P(W^h_{n+1}=2p_d(n,k)|Y^h_n=y^n_k )=p_u(n,k)=1-\P(W^h_{n+1}=-2p_u(n,k)|Y^h_n=y^n_k ).
	\end{equation}
To this purpose,  fix a node $(n,k)$. For the sake of simplicity, we write $k_u$, resp. $k_d$, in place of $k_u(n,k)$, resp. $k_d(n,k)$. We have (see \eqref{tree1}) that
$$
	y^{n+1}_{k+1}\leq y^n_k+\frac{\sigma^2}{4}h +\sigma \sqrt{y^n_k h },\qquad
	y^{n+1}_{k}\leq y^n_k+\frac{\sigma^2}{4}h -\sigma \sqrt{y^n_k h }.
$$
	By Proposition \ref{propjumps}, for $h<\bar h$,
	if $\theta_*h< y^n_k <\theta^*/h$  the up and down jumps are  both single, hence $y^{n+1}_{k_u}=y^{n+1}_{k+1}$ and $y^{n+1}_{k_d}=y^{n+1}_{k}$
	On the other hand, if $y^n_k \geq \theta^*/h$ the up jump is single, that is $y^{n+1}_{k_u}=y^{n+1}_{k+1}$ , 
	while the down jump can be multiple but, in every case, is still true that
	\begin{equation*}
	y^{n+1}_{k_d}\leq	y^{n+1}_{k}=  y^n_k+\frac{\sigma^2}{4}h -\sigma \sqrt{y^n_k h }.
	\end{equation*}
	Finally, if $y^n_k \leq \theta_*h$, we have $y^{n+1}_{k_d}=y^{n+1}_{k}$, 
	while the up jump can be multiple but we can always write
	$$
	y^{n+1}_{k_u} \leq y^n_k+C_*h \leq y^n_k+C_*h+ \sigma \sqrt{y^n_k h}.
	$$
	Summing up, if we set $\bar C=\max\Big(C_*, \frac{\sigma^2}{4}\Big)$,
	for every $h$ small 
	we can write
	$$
	0\leq Y^h_{n+1}\leq Y^h_n+ \bar Ch+\sigma \sqrt{Y^h_n h }\, Z^h_{n+1},
	$$
	where $Z^h_{n+1}$ is a random variable such that $\P( Z^h_{n+1}=+1|Y^h_n=y^n_k )=p_u(n,k)$ and  $\P( Z^h_{n+1}=-1|Y^h_n=y^n_k )=p_d(n,k)$. 
	Note that $\E(Z^h_{n+1}|Y^h_n=y^n_k )=p_u(n,k)-p_d(n,k)=2p_u(n,k)-1$. Then,  the random variable
	$$
	W^h_{n+1}= Z^h_{n+1}-\E[Z^h_{n+1}|Y^h_n]
	$$
	has  exactly the law given in \eqref{law-W}.
	We also define  the function $	P_u(y^n_k)= p_u(n,k)$.
	Therefore,
$$
	\begin{array}{rl}
	 0\leq &	Y^h_{n+1}\leq 
	Y^h_n+ \bar Ch+\sigma \sqrt{Y^h_n h} \,(2P_u(Y^h_n)-1)  +\sigma \sqrt{Y^h_n h }\,W^h_{n+1}\\
	\leq &
	Y^h_n+ \bar Ch+\sigma \sqrt{\theta^*}\sqrt{\frac{Y^h_n h}{\theta^*} } \,\big|2P_u(Y^h_n)-1\big|\I_{ \{Y^h_n  \geq \frac{\theta^*}{h}\}}
	+\sigma \sqrt{Y^h_n h} \,\big(2P_u(Y^h_n)-1\big)\I_{ \{Y^h_n < \frac{\theta^*}{h}\}}\\
	&+\sigma \sqrt{Y^h_n h }\,W^h_{n+1}.
	\end{array}
$$
	Now, if $Y^h_n  \geq \frac{\theta^*}{h}$ then $\sqrt{\frac{Y^h_n h}{\theta^*} }
	\leq \frac{Y^h_n h}{\theta^*}$ and, since $P_u\in[0,1]$, we have $|2P_u(Y^h_n)-1|\leq 1$. Then, we have
	$$
	0\leq Y^h_{n+1}
	\leq
	(1+bh)Y^h_n+ \bar Ch
	+\sigma \sqrt{Y^h_n h} \,\big(2P_u(Y^h_n)-1\big)\I_{ \{Y^h_n < \frac{\theta^*}{h}\}}
	+\sigma \sqrt{Y^h_n h }\,W^h_{n+1},
$$
	where $b=\frac{\sigma}{\sqrt{\theta^*}}$.
	Let us study the quantity $\sigma \sqrt{Y^h_n h }\, (2P_u(Y^h_n)-1) \I_{ \{Y^h_n  < \frac{\theta^*}{h}\}}  $. If   $\theta_*h< y^n_k < \theta^*/h$, by using \eqref{proba} and point 1. of Proposition \ref{propjumps}, we can explicitly write
	\begin{align*}
	\sigma \sqrt{y^n_k h }\,&(	2P_u(y^n_k)-1)
	= 	\sigma \sqrt{y^n_k h }\,\Big(2\Big(  \frac 1 2 + \frac{4\mu_Y(v_k^n)-\sigma^2}{8\sigma \sqrt{y^n_k h }}   \Big)h-1
	\Big)=	\mu_Y(v_k^n)h-\frac{\sigma^2}4h \leq  \kappa\theta h.	
	\end{align*}
	If instead $y^n_k \leq \theta_*h$, then by using 2. in Proposition \ref{propjumps} we have
$$
	\begin{array}{rl}
	\sigma \sqrt{y^n_k h }\,(	2P_u(y^n_k)-1)
	&=	\sigma \sqrt{y^n_k h }\,  \frac{2\mu_Y(y^n_k)h+ 2y^n_k-y^{n+1}_{k_d(n,k)}-y^{n+1}_{k_u(n,k)} }{y^{n+1}_{k_u(n,k)}-y^{n+1}_{k_d(n,k)}} \\
	&\leq	\sigma \sqrt{y^n_k h }\, \frac{2\mu_Y(y^n_k)h+ 2y^n_k }{y^{n+1}_{k+1}-y^{n+1}_{k}} \leq	\sigma \sqrt{y^n_k h }\,  \frac{2\kappa\theta h+ 2\theta_*h }{2\sigma \sqrt{y^n_k h }} =(\kappa\theta + \theta_*)h.	
	\end{array}
$$
	So, by inserting, for every $n\leq N-1$ we get
$$
	\begin{array}{rl}
	\nonumber	0\leq Y^h_{n+1}&\leq (1+bh)Y^h_n+\bar Ch+\sigma (\kappa\theta + \theta_*) h
	+\sigma \sqrt{Y^h_n h }\,W^h_{n+1}
	\end{array}
$$
	and \eqref{key} is proved.
	
	Now, by using \eqref{key} and \eqref{law-W}, we can repeat step by step the proof of Lemma 2.6 in \cite{A-MC} and we get \eqref{stima_momento-gen}.
	
	\medskip 	 
	\textbf{Step 2: proof of \eqref{stima_incremento-gen}}. 	 We can write
$$
	\begin{array}{rl}
	| Y^h_{n+1}- Y^h_n|^p\leq &3^{p-1}\Big| \frac{\sigma^2}{4}h +\sigma\sqrt{ Y^h_nh}Z^h_{n+1}\Big|^p\I_{ \{ \theta_*h<  Y^h_n< \theta^*/h\}}
	+3^{p-1}|  Y^h_{n+1}- Y^h_n|^p\I_{ \{  Y^h_n\leq \theta_*h\}}\\&+3^{p-1}| Y^h_{n+1}- Y^h_n|^p\I_{\{ Y^h_n\geq \theta^*/h\}}=:3^{p-1}(I_1+I_2+I_3),
	\end{array}
$$
	where we have used that, on the set $\{\theta_*h<  Y^h_n< \theta^*/h\}$, we have
	$
	Y^h_{n+1}=  Y^h_n+ \frac{\sigma^2}{4}h +\sigma\sqrt{ Y^h_nh}Z^h_{n+1},
	$
	with $\P(Z^h_{n+1}=1\mid  Y^h_{n+1})=P_u( Y^h_n)$ and $\P(Z^h_{n+1}=-1\mid  Y^h_{n+1})=P_d( Y^h_n)$.
	Now, by using \eqref{stima_momento-gen}, Proposition \ref{propjumps}, the Cauchy-Swartz and the Markov inequality,
$$
	\begin{array}{rl}
	I_1&\leq \E\Big[\Big(\frac{\sigma^2}{4}h +\sigma\sqrt{ Y^h_nh}\Big)^p\Big]
	\leq
	2^{p-1}\Big(\Big(\frac{\sigma^2}{4}\Big)^p+\sigma^p\E[( Y^h_n)^p]^{1/2} \Big)h^{p/2}
	\leq
	2^{p-1}\Big(\Big(\frac{\sigma^2}{4}\Big)^p+\sigma^p\sqrt{C_p} \Big)h^{p/2},\\
	I_2 &\leq C_*^ph^p,\\
	I_3	&\leq \E[( Y^h_{n+1}- Y^h_n)^{2p}]^{1/2} \P\Big( Y^h_n>\frac{\theta^*}{h}\Big)^{1/2}\leq 2^{p}\sqrt{\frac{C_{2p}C_p}{(\theta^*)^p} }\,h^{p/2},
	\end{array}
$$
	and \eqref{stima_incremento-gen} follows.
\end{proof}

\begin{proposition}\label{moments-1} 
	The CIR approximating tree $\{Y^h_n\}_{n=0,\ldots,N}$ satisfies Assumption $\mathcal{H}_1$.
	
\end{proposition}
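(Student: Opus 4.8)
The plan is to compute the first three local moments of the tree by hand, splitting the state space according to the three regimes isolated in Proposition \ref{propjumps}: the \emph{central} region $\theta_* h\le y^n_k\le \theta^*/h$, the \emph{low} region $y^n_k<\theta_* h$, and the \emph{high} region $y^n_k>\theta^*/h$. Since $f_h,g_h,j_{h,l}$ are only ever evaluated at $Y^h_n$, which lives on the lattice, it suffices to control them there, and we may extend them by $0$ off the lattice. The key auxiliary fact is that, by Proposition \ref{moments-2} (i.e.\ Assumption $\mathcal{H}_2$) and the Markov inequality, $\P(Y^h_n>\theta^*/h)$ decays faster than any power of $h$; this is what will absorb the high region.

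First I would handle the first moment. Proposition \ref{propjumps}(4) guarantees that for $h<\bar h$ the truncations ``$0\vee$'' and ``$\wedge 1$'' in \eqref{pik} are inactive, so $p_u,p_d$ are given by \eqref{proba}; then a one-line computation gives, at every node $(n,k)$,
$$
\E[Y^h_{n+1}-Y^h_n\mid Y^h_n=y^n_k]=p_u(n,k)\,y^{n+1}_{k_u(n,k)}+p_d(n,k)\,y^{n+1}_{k_d(n,k)}-y^n_k=\mu_Y(y^n_k)h,
$$
the last equality being exactly how \eqref{proba} is designed. Hence $f_h\equiv 0$ and \eqref{stima-f} holds with $C=0$.

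Next the second and third moments, region by region. In the central region Proposition \ref{propjumps}(1) makes both jumps single, so $Y^h_{n+1}-Y^h_n=\frac{\sigma^2}{4}h+\sigma\sqrt{Y^h_n h}\,\xi$ with $\xi\in\{\pm1\}$ and $\E[\xi\mid Y^h_n=y^n_k]=2p_u(n,k)-1=\tfrac{(4\mu_Y(y^n_k)-\sigma^2)\sqrt h}{4\sigma\sqrt{y^n_k}}$. Expanding and using $\E[\xi^2]=1$, $\E[\xi^3]=\E[\xi]$ gives, on this region,
$$
\E[(Y^h_{n+1}-Y^h_n)^2\mid Y^h_n]=\sigma^2 Y^h_n h+h^2\Big(\tfrac{\sigma^2}{2}\mu_Y(Y^h_n)-\tfrac{\sigma^4}{16}\Big),\qquad \E[(Y^h_{n+1}-Y^h_n)^3\mid Y^h_n]=h^2\,q(Y^h_n),
$$
with $q$ a polynomial; since $a_Y(y)=\sigma^2 y$, this exhibits $g_h$ and $j_{h,3}$ there as $h^2$ times polynomially growing functions, and \eqref{stima-g}--\eqref{stima-j} restricted to the central region follow from \eqref{stima_momento-gen}. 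In the low region, Proposition \ref{propjumps}(2), the elementary bound $y^{n+1}_k\le y^n_k+\tfrac{\sigma^2}{4}h-\sigma\sqrt{y^n_k h}$ and $y^{n+1}_k\ge0$ give $|Y^h_{n+1}-Y^h_n|\le Ch$ a.s., hence $|g_h(Y^h_n)|\le Ch^2$ and $|j_{h,3}(Y^h_n)|\le Ch^3$ pointwise there, with uniform constants. In the high region, Proposition \ref{propjumps}(3) makes the up jump single, and from the definition \eqref{kd} of $k_d$ (which forces $y^{n+1}_{k_d}\le y^n_k+\mu_Y(y^n_k)h<y^{n+1}_{k_d+1}$) together with the bound $y^{n+1}_{j+1}-y^{n+1}_j=\sigma\sqrt h(\sqrt{y^{n+1}_{j+1}}+\sqrt{y^{n+1}_j})=O(\sqrt{y^n_k h})$ on the spacing near $k_d$, one gets the a.s.\ pointwise estimate $|Y^h_{n+1}-Y^h_n|\le C\,Y^h_n h$; then $|g_h(Y^h_n)|\,\I_{\{Y^h_n>\theta^*/h\}}\le C(Y^h_n)^2 h\,\I_{\{Y^h_n>\theta^*/h\}}$ and, by Cauchy--Schwarz, the Markov inequality and \eqref{stima_momento-gen} (to get $\P(Y^h_n>\theta^*/h)^{1/(2p)}\le Ch$), the $L^p$ contribution of this region is $\le Ch^2$; $j_{h,3}$ is treated identically, even more easily since $h^3\le h^2$ and all moments of $Y^h_n$ are bounded. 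Summing the three contributions yields \eqref{stima-g} and \eqref{stima-j}, hence Assumption $\mathcal{H}_1$.

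The hard part is the high region: there the down jump becomes multiple and the increment is only $O(Y^h_n h)$ instead of $O(\sqrt{Y^h_n h})$, so the per-node moment estimates are \emph{not} $O(h^2)$; what rescues the argument is that $\{Y^h_n>\theta^*/h\}$ is super-polynomially rare by $\mathcal{H}_2$, but cashing this in requires the careful pointwise control of the increment obtained from the lattice geometry and \eqref{kd}, combined with a H\"older/Markov estimate. Everything else is the explicit (if slightly tedious) moment expansion on the central region and the trivial bound near $0$.
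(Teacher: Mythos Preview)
Your proof is correct and follows essentially the same three-region decomposition and computations as the paper. The only minor variation is in the high region: the paper bounds the conditional second and third moments there directly via the $L^p$ increment estimate \eqref{stima_incremento-gen} from Proposition \ref{moments-2}, whereas you first extract a pointwise bound $|Y^h_{n+1}-Y^h_n|\le C\,Y^h_n h$ from the lattice geometry (using that $\sqrt{Y^h_n h}\le Y^h_n h/\sqrt{\theta^*}$ on $\{Y^h_n>\theta^*/h\}$); both routes then conclude by the same H\"older/Markov argument.
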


\begin{proof}
	Straightforward computations give $	\E[Y^h_{n+1}-Y^h_n\mid Y^h_n]=\mu_Y(Y^h_n)h$, so \eqref{momento1-gen} and \eqref{stima-f} immediately follow. As for	\eqref{momento2-gen}, 
	\begin{align*}
	&\E[( Y^h_{n+1}-Y^h_n)^2\mid Y^h_n=y^n_k]  =\E[( Y^h_{n+1}-Y^h_n)^2\mid Y^h_n=y^n_k]\I_{\{y^n_k \leq \theta_*h\}}\\
	&\quad +\E[( Y^h_{n+1}-Y^h_n)^2\mid Y^h_n=y^n_k]\I_{\{\theta_*h\leq y^n_k \leq \theta^*/h\}} 
	+\E[( Y^h_{n+1}-Y^h_n)^2\mid Y^h_n=y^n_k] \I_{\{y^n_k > \theta^*/h\}} .
	\end{align*}
	We study separately the first two terms of the above r.h.s. If $y^n_k <\theta_*h$, Proposition \ref{propjumps} gives $|y^{n+1}_{k_u}-y^n_k| \leq C_*h $ and $|y^{n+1}_{k_d}-y^n_k| \leq C_*h $
	so that
	$$
	\E[( Y^h_{n+1}-Y^h_n)^2\mid Y^{h}_n=y^n_k]\I_{\{ y^n_k \leq \theta_*h\}}
	=\varphi_1(y^n_k)h^2\I_{\{y^n_k \leq \theta_*h\}},
	$$
	with $\varphi_1$ such that
	$
	|\varphi_1(y)|\leq C_*^{2}.
	$
	If instead  $\theta_*h\leq y^n_k\leq \theta^*/h$, by using \eqref{proba} we get
	\begin{align*}
	(y^{n+1}_{k_u}-y^n_k)^2p_u(n,k)+ (y^{n+1}_{k_d}-y^n_k)^2p_d(n,k)
	=\sigma^2y^n_kh+\frac{\sigma^2}2\Big( \kappa(\theta-y^n_k)-\frac{\sigma^2}8 \Big)h^2.
	\end{align*}
	So,
	$$
	\E[( Y^h_{n+1}-Y^h_n)^2\mid Y^{h}_n=y^n_k]\I_{\{\theta_*h\leq y^n_k \leq \theta^*/h\}}
	=\big(\sigma^2y^n_kh+\varphi_2(y^n_k)h^2\big)\I_{\{\theta_*h\leq y^n_k \leq \theta^*/h\}},
	$$
	with $\varphi_2$ such that
	$
	|\varphi_2(y)|\leq \frac{\sigma^2}2\Big( \kappa(\theta+y)+\frac{\sigma^2}8 \Big).
	$
	By inserting, \eqref{momento2-gen} follows with $g_h$ satisfying
	$$
	|g_h(Y^h_n)|\leq c_1(1+Y^h_n)h^2+
	\E((Y_{n+1}^h-Y_n^h)^2+\sigma h Y_n^h\mid Y^h_n)\I_{\{Y^h_n\geq \theta^*/h\}},
	$$
$c_1$ denoting a suitable constant. By Proposition \ref{moments-2} and the Markov inequality, \eqref{stima-g}  follows.	

	Finally, for \eqref{momento3-gen}, we write
	\begin{align*}
	&\E[( Y^h_{n+1}-Y^h_n)^3\mid Y^h_n=y^n_k]  
	=\E[( Y^h_{n+1}-Y^h_n)^3\mid Y^h_n=y^n_k]\I_{\{y^n_k \leq \theta_*h\}}\\
	&\quad +\E[( Y^h_{n+1}-Y^h_n)^3\mid Y^h_n=y^n_k]\I_{\{\theta_*h<y^n_k < \theta^*/h\}} 
	+\E[( Y^h_{n+1}-Y^h_n)^3\mid Y^h_n=y^n_k] \I_{\{y^n_k \geq \theta^*/h\}} .
	\end{align*}
	Now, if $y^n_k \leq \theta_*h$ then $|Y^h_{n+1}-y^n_k|^3\leq C_*^3 h^3$. If instead $\theta_*h< y^n_k < \theta^*/h$, by \eqref{proba} one obtains
	\begin{align*}
	(y^{n+1}_{k_u}-y^n_k)^3p_u(n,k)+ (y^{n+1}_{k_d}-y^n_k)^3p_d(n,k)
	=\mu_Y(y^n_k)h^{2}\Big(\sigma^{2}y^n_k+\frac{3\sigma^{4}}{16} \,h\Big)+\Big(\frac{\sigma^{4}}{2}\,y^n_k+\frac{\sigma^{4}}{16}\,h\Big)h^{2}.
	\end{align*} 
Therefore,
$$
	|j_h(Y^h_n)|\leq c_2 h^2(1+(Y^h_n)^2)+\E(|Y_{n+1}^h-Y_n^h|^3+\sigma h Y_n^h\mid Y^h_n)\I_{\{Y^h_n\geq \theta^*/h\}},
$$
$c_2$ denoting a suitable constant, and again by Proposition \ref{moments-2} and the Markov inequality, \eqref{stima-j} follows.	
\end{proof}

We are finally ready for the
\begin{proof}[Proof of Theorem \ref{conv_T-CIR}]
	By Theorem 4.1 in \cite{A-MC} (or Corollary \ref{corollary-cir}), one has that if $f\in C^{4}_\pol(\R_+)$  then  $u\in C^{4}_{\pol,T}(\R_+)$. Since Assumption $\mathcal{H}_1$ and $\mathcal{H}_2$ both hold,
	the statement  follows as an application of Theorem \ref{conv_T}.
\end{proof}

	\section{Hybrid schemes for jump-diffusions and convergence rate   } \label{sect-hybrid}
We now introduce a $m$-dimensional jump-diffusion $(X_t)_{t\in[0,T]}$ whose dynamics is given by coefficients depending on  the process $(Y_t)_{t\in[0,T]}$ discussed in  Section \ref{sect-markovapprox}. More precisely, we consider  the stochastic system
\begin{equation}\label{generalsystem}
\begin{cases}
dX_t= \mu_X(Y_t)dt+\sigma_X(Y_t)\, dB_t+\gamma_X(Y_t)dH_t,  \qquad & X_0 \in \R^m,\\
dY_t=\mu_Y(Y_t)dt+\sigma_Y(Y_t)\,dW_t, \qquad &Y_0\in\D,
\end{cases}
\end{equation}
where  $B$ is a $\ell_1$-dimensional Brownian motion independent of $W$ and  $H$ is a $\ell_2$- dimensional compound Poisson process with intensity  $\lambda$ and i.i.d. jumps  $\{J_k\}_k$ taking values in $\R^{\ell_2}$, that is,
\begin{equation}\label{H}
H_t=\sum_{k=1}^{K_t} J_k,
\end{equation}
$K$ denoting a Poisson process with intensity $\lambda$.  We assume that the Poisson process $K$, the jump amplitudes $\{J_k\}_k$ and the Brownian motion $(B,W)$ are independent. Moreover, we ask that $J_1$ has a density $p_{J_1}$, so that the L\'evy measure associated with $H$  has a density as well:
\begin{equation*}
\nu(dx)=\nu(x)dx=\lambda p_{J_1}(x)dx.
\end{equation*}
We denote by  $\mathcal{L}$  the infinitesimal generator associated with the diffusion pair $(X,Y)$:
\begin{equation}\label{general-L}
\begin{array}{rl}
\L g(x,y)=&
\frac 12 \mbox{Tr}(a(y)D^2_{x,y}g(x,y))+\mu(y) \cdot \nabla_{x,y}g(x,y) \\
&\displaystyle
+ \int (g(x+\gamma_X(y)\zeta,y)-g(x,y))\nu(d\zeta),
\end{array}
\end{equation}
where $\mu(y)=(\mu_X(y),\mu_Y(y))^\star$ and $a(y)=\sigma\sigma^\star(y)$, where 
$$
\sigma(y)=\begin{pmatrix}
\sigma_X(y)&0_{m\times d}\\
0_{d\times m}&\sigma_Y(y)
\end{pmatrix}.
$$ Here, $D^2_{x,y}$ and $\nabla_{x,y}$ are respectively the Hessian and the gradient operator w.r.t. the space variables $(x,y)$.
We    assume that the coefficients of $X$ do not depend on the time variable just to simplify the notation, but  all the proofs in this paper are still valid in the time-depending case under  non restrictive classical assumptions.  

Hereafter, we fix $T>0$ , $f:\R^m\times \mathcal{D}\rightarrow \R$ and we define
\begin{equation}\label{european}
u(t,x,y)=\E\Big[f(X^{t,x,y}_T,Y^{t,y}_T)\Big], \qquad(t,x,y)\in[0,T]\times \R^m\times\D,
\end{equation}
where $(X^{t,x,y}_s,Y^{t,x}_s)_{s\in [t,T]}$ is the solution of \eqref{generalsystem}  with starting condition $(X_t,Y_t)=(x,y)$.
We do not enter in specific assumptions but from now on, the following requests (1) and (2)  are assumed to hold:
\begin{itemize}
	\item [(1)]
	there exists a unique weak solution of \eqref{generalsystem}  and $\P((X_t,Y_t)\in\R^m\times \mathcal{D}\ \forall t)=1$;
	\item[(2)]
	the function $u$ in \eqref{european} solves the PIDE
	\begin{equation}\label{PDEB}
	\left\{
	\begin{array}{ll}
	\partial_tu(t,x,y)+\mathcal{L}u(t,x,y)=0, \quad &(t,x,y)\in [0,T)\times\R^m\times \D, \\
	u(T,x,y)=f(x,y),&\mbox{ in } \R^m\times\D ,
	\end{array}
	\right.
	\end{equation}
	$\mathcal{L}$ being given in \eqref{general-L}.
\end{itemize} 

We are interested in computing $u(0,X_0,Y_0)=\E\big[f(X_T,Y_T)\big]$.
This is a problem of interest in a large number of applications. For example, in finance $X$ can represent the asset log-price (or a transformation of it) and $Y$ can be interpreted as a random source such as  a stochastic volatility and/or a stochastic  interest rate, so   $u(t,x,y)$ represents the  value function at time $t$ of a European option with maturity $T$ and  (discounted) payoff $f$. In next  Section \ref{sect-bates} we give an application to the Heston model \cite{heston} and the  Bates model \cite{bates}.

	\subsection{The hybrid procedure}\label{sect-hybrid2}
	
Let $u$ be given in \eqref{european}.
We study here the  computation of $u(0,X_0,Y_0)$ by a backward hybrid procedure developed in \cite{bcz,bcz-hhw,bctz}. Roughly speaking, one uses a Markov  chain in order to approximate the  process $Y$ and a different numerical procedure to handle the jump-diffusion component $X$. Let us briefly recall the main ideas and describe the approximation of $u$.

We start from the representation of $u(t,x,y)$ at times $nh$, $h=T/N$ and $n=0,\ldots,N$,  by the usual 
dynamic programming principle: for $(x,y)\in\R^m\times\D$,
\begin{equation} \label{backward}
\begin{cases}
u(T,x,y)= f(x,y)\quad
\mbox{and as } n=N-1,\ldots,0,\\
u(nh,x,y) =  
\E\Big[u\big((n+1)h, X_{(n+1)h}^{nh,x,y}, Y_{(n+1)h}^{nh,y}\big)
\Big].
\end{cases}
\end{equation}
So, the central issue is to have a good approximation of the expectations in \eqref{backward}.

As a first step, let $( \hat{Y}^h_{n})_{n=0,\ldots,N}$  be a Markov chain 
which approximates  $Y$. Of course, we assume that $( \hat{Y}^h_{n})_{n=0,\ldots,N}$ is  independent of the noises  $B$ (Brownian motion) and $H$ (compound Poisson process) driving $X$ in \eqref{generalsystem}.  Then, at each step  $n=0,1,\ldots,N-1$, for every $y\in \mathcal{Y}^h_n\subseteq \D$ (the state space of $\hat{Y}^h_{n}$) we write
\begin{align*}
\E\Big[u\big((n+1)h,  X_{(n+1)h}^{nh,x,y},  Y_{(n+1)h}^{nh,y}\big)\Big] \approx 	\E\Big[u\big((n+1)h,  X_{(n+1)h}^{nh,x,y},  \hat{Y}^h_{n+1}\big)\big|\hat{Y}^h_n=y\Big].
\end{align*}

As a second step, we approximate  the component  $X$ on $[nh,(n+1)h]$ by freezing the coefficients in \eqref{generalsystem} at the observed position $\hat{Y}^h_n=y$, that is, for $t\in[nh,(n+1)h]$,
$$
X_{t}^{nh,x,y}\stackrel{\mbox{\tiny law}}{\approx} \widehat{X}^{nh,x}_t(y)=x+\mu_X(y)(t-nh)+\sigma_X(y)\, (B_t-B_{nh})+\gamma_X(y)(H_t-H_{nh}).
$$
Therefore,	by using that the Markov chain, $B$ and $H$ are all independent, we write
\begin{align*}
\E\Big[u\big((n+1)h,  X_{(n+1)h}^{nh,x,y},  Y_{(n+1)h}^{nh,y}\big)\Big] 
&\approx 
\E\Big[u\big((n+1)h,  \widehat{X}^{nh,x}_{(n+1)h}(y),  \hat{Y}^h_{n+1}\big)\big|\hat{Y}^h_n=y\Big]\\
&=\E\big[ \phi(  \hat{Y}^{h}_{n+1};x,y )\big|\hat{Y}^h_n=y\big],
\end{align*}
where
\begin{equation}
\label{phi}
\phi(\zeta;x,y)= \E\big[u((n+1)h, \widehat{X}^{nh,x}_{(n+1)h}(y),\zeta)\big].
\end{equation}
From the Feynman-Kac formula, one gets $\phi(\zeta;x,y)=v(nh,x;y,\zeta)$, where $(t,x)\mapsto v(t,x;y,\zeta)$ is the solution at time $nh$ of the parabolic PIDE Cauchy problem
\begin{equation}\label{PDE-barug-h}
\begin{array}{ll}
\displaystyle
\partial_t v+\L^{(y)}v=0, \qquad & \mbox{in } [nh,(n+1)h)\times \R^m, \\
\displaystyle
v((n+1)h,x;y,\zeta)
=u((n+1)h,x,\zeta), & x\in \R^m,
\end{array}
\end{equation}
where
\begin{equation}
\label{genh}
\L^{(y)}g(x)=\mu_X(y)\cdot\nabla_xg(x) +\frac 12\mbox{ Tr}(a_X(y)D^2_{x}g(x))+ \int \big(g(x+\gamma_X(y)\zeta)-g(x)\big)	\nu(\zeta)d\zeta
\end{equation}
is an integro-differential operator, acting on the functions $g=g(x)$. Here $a_X(y)=\sigma_X(y)\sigma_X^\star(y)$, while
$\nabla_x$ and $D^2_x$ are, respectively,  the  gradient vector and  the Hessian matrix with respect to  $x\in\R^m$. Recall that in \eqref{PDE-barug-h}--\eqref{genh}, $y \in \D$ is just a parameter, so  $\L^{(y)}$ has constant coefficients.

Consider now a numerical solution of the PIDE \eqref{PDE-barug-h}. Let $\dx=(\dx_1,\dots,\dx_m)$ denote a fixed spatial step and set $\mathcal{X}$ a grid on $\R^m$ given by $\mathcal{X}=\{x\,:\,x=((X_0)_1+i_1\Delta x_1,\dots, (X_0)_m+i_m\Delta x_m), (i_1,\ldots,i_m)\in \Z^m\}$. For $y\in\mathcal{D}$,  let $\Pi^h_{\dx}(y)$ be a linear operator (acting on suitable functions on $\mathcal{X}$) which gives 
the approximating solution to the PIDE \eqref{PDE-barug-h} at time $nh$.  Then, as $x\in\mathcal{X}$, we get the numerical approximation
\begin{align*}
\E\Big[u\big((n+1)h,  X_{(n+1)h}^{nh,x,y},  Y_{(n+1)h}^{nh,y}\big)\Big] 
&\approx 
\E\Big[\Pi^h_{\dx}(y)u\big((n+1)h, \cdot,  \hat{Y}^h_{n+1}\big)(x)\big|\hat{Y}^h_n=y\Big].
\end{align*}
Therefore, by inserting  in \eqref{backward}, the hybrid numerical procedure works as follows:
the function $x\mapsto u(0,x,Y_0)$, $x\in\mathcal{X}$,  is approximated by $u^h_0(x,Y_0)$ backwardly defined as
\begin{equation}\label{backward-ter0}
\begin{cases}
u^h_N(x,y)= f(x,y),\quad \mbox{$(x,y)\in \mathcal{X}\times\mathcal{Y}^h_N$},
\quad \mbox{and as $n=N-1,\ldots,0$:}\\
u^h_n(x,y) =  \E[
\Pi^h_{\dx}(y) u^h_{n+1}( \cdot,  \hat{Y}^h_{n+1})(x)\mid \hat{Y}^h_n=y],\quad \mbox{$(x,y)\in \mathcal{X}\times\mathcal{Y}^h_n$.}
\end{cases}
\end{equation}

	\subsection{Convergence speed of the scheme \eqref{backward-ter0}} 
	\label{sect-convergence}

We introduce the following assumption on the linear operator $\Pi^h_{\dx}(y)$ in \eqref{backward-ter0} (recall the notation $l_p(\mathcal{X})$ in  Section \ref{sect-notation}).

\medskip

\noindent
\textbf{Assumption $\mathcal{K}(p,c,\mathcal{E})$.} 
\textit{Let $p\in [1,\infty]$, $c=c(y)\geq 0$, $y\in\D$ and $\mathcal{E}= \mathcal{E}(h,\dx)\geq 0$ such that 
	$\lim_{(h,\dx)\rightarrow 0}\mathcal{E}(h,\dx)=0.$
	We say that the 
	linear operator $\Pi^h_{\dx}(y):l_p(\mathcal{X})\to l_p(\mathcal{X})$, $y\in\D$, satisfies Assumption $\mathcal{K}(p,c,\mathcal E)$ if 
	\begin{equation}\label{stab}
	| \Pi^h_{\dx}(y)|_p\leq1+c(y)h
	\end{equation}
	and, $u$ being defined in \eqref{european}, for every $n=0,\dots, N-1$, one has
	\begin{equation}\label{cons}
	\E\Big[	\Pi^h_{\dx}(\hat{Y}^h_n) u((n+1)h,\cdot,\hat{Y}^h_{n+1})(x)\,\big|\, \hat{Y}^h_n\Big]= u(nh,x,\hat{Y}^h_n)+ \mathcal{R}_n^h(x,\hat{Y}^h_n),
	\end{equation}
	where the remainder $\mathcal{R}_n^h(x ,\hat{Y}^h_n)$ satisfies the following property: there exist $\bar h, C>0$ such that for every $h<\bar h$, $\dx<1$ and $n\leq N=\lfloor T/h\rfloor$ one has		
	\begin{equation}\label{ass_errore}
	\Big\|e^{\sum_{l=1}^{n}\, c(\hat{Y}^h_l)h} |\mathcal{R}_n^h(\cdot,\hat{Y}^h_n)|_p\Big\|_p\leq C h\mathcal{E}(h,\dx), \qquad \mbox{if } p\in [1,\infty].
	\end{equation}
}

Assumption $\mathcal{K}(p,c,\mathcal{E})$ is inspired by the Lax-Richtmeyer's convergence theorem \cite{Lax}. In fact, recall that the  numerical procedure \eqref{backward-ter0} aims to solve the multidimensional equation 
$$
\partial_tu(t,x,y)+\mathcal{L}u(t,x,y)=0, \quad (t,x,y)\in [0,T)\times\R^m\times \D.
$$
Being dependent on $y$, the coefficients of  $\L$ (see \eqref{general-L})  are not constant as required by the Lax-Richtmeyer's result.  But at each time step $n$, the hybrid scheme isolates the component $y$ and applies the discrete operator $\Pi^h_{\dx}(y)$ to numerically solve the PIDE 
$$	
\partial_t v(t,x)+\L^{(y)}  v(t,x)=0, \qquad (t,x)\in [nh,(n+1)h)\times \R^m.
$$ 
Here, $y$ is just a parameter (the current position of the Markov chain), so the coefficients of $\L^{(y)}$	(see \eqref{genh}) are indeed constant. That's  why the Lax-Richtmeyer technique can be adapted, as it follows in the next result.

\begin{theorem}\label{convergencebates}
	Assume that $\Pi^h_{\dx}(y)$, $y\in\D$, satisfies Assumption $\mathcal{K}(p,c,\mathcal{E})$. Let $u$ be defined  in \eqref{european}   and $u^h$ be the approximation through the  scheme \eqref{backward-ter0}.
	Then, there exist $\bar h, C>0$ such that for every $h<\bar h$ and $\dx<1$  one has
	\begin{equation}
	| u(0,\cdot,Y_0)-u^h_0(\cdot,Y_0)|_{p} \leq CT\mathcal{E}(h,\dx). 
	\end{equation}
\end{theorem}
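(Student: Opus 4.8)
The plan is to mimic the telescoping-plus-stability argument behind the Lax–Richtmyer theorem, but carried out pathwise in the $y$-variable along the Markov chain $(\hat Y^h_n)$, and in the $l_p(\mathcal X)$-norm in the $x$-variable. First I would write the global error at time $0$ as a telescoping sum over $n=0,\dots,N-1$ of one-step errors. The natural quantity to track is
$$
\varepsilon_n := \Big\| \, e^{\sum_{l=1}^{n} c(\hat Y^h_l) h}\,\big| u(nh,\cdot,\hat Y^h_n) - u^h_n(\cdot,\hat Y^h_n)\big|_p \, \Big\|_p,
$$
with $\varepsilon_N = 0$ since $u^h_N = f = u(T,\cdot,\cdot)$ on the grid. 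The exponential weight is exactly the device that absorbs the geometric blow-up coming from the stability constant $1+c(y)h$ in \eqref{stab}.

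**Key steps.** Fix $h<\bar h$ and $\dx<1$. For the one-step analysis, insert the exact solution $u$ into the scheme: using the backward definition \eqref{backward-ter0} and the consistency identity \eqref{cons},
$$
u((n{+}1)h,\cdot,\hat Y^h_{n+1}) - u^h_{n+1}(\cdot,\hat Y^h_{n+1})
$$
is mapped by $\Pi^h_{\dx}(\hat Y^h_n)$ and, after taking conditional expectation given $\hat Y^h_n$, produces $u(nh,x,\hat Y^h_n) - u^h_n(x,\hat Y^h_n)$ up to the remainder $\mathcal R^h_n(x,\hat Y^h_n)$. Concretely,
$$
u(nh,\cdot,\hat Y^h_n) - u^h_n(\cdot,\hat Y^h_n)
= \E\Big[ \Pi^h_{\dx}(\hat Y^h_n)\big( u((n{+}1)h,\cdot,\hat Y^h_{n+1}) - u^h_{n+1}(\cdot,\hat Y^h_{n+1})\big)(x)\,\big|\,\hat Y^h_n\Big] + \mathcal R^h_n(x,\hat Y^h_n).
$$
Now take $|\cdot|_p$ in $x$, use the operator bound $|\Pi^h_{\dx}(\hat Y^h_n)|_p\le 1+c(\hat Y^h_n)h$ together with the conditional Jensen inequality (valid since $t\mapsto |t|^p$ is convex for $p\in[1,\infty)$, and directly for $p=\infty$), and multiply by the weight $e^{\sum_{l=1}^n c(\hat Y^h_l)h}$. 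Using $(1+c(\hat Y^h_n)h)\,e^{\sum_{l=1}^n c(\hat Y^h_l)h}\le e^{\sum_{l=1}^{n+1} c(\hat Y^h_l)h}$ and the tower property of conditional expectation, taking the $L^p(\Omega)$-norm and applying the triangle inequality yields
$$
\varepsilon_n \le \varepsilon_{n+1} + \Big\| e^{\sum_{l=1}^n c(\hat Y^h_l)h}\,|\mathcal R^h_n(\cdot,\hat Y^h_n)|_p\Big\|_p \le \varepsilon_{n+1} + C h\,\mathcal E(h,\dx),
$$
where the last bound is precisely \eqref{ass_errore}. Iterating from $n=N-1$ down to $n=0$ and using $\varepsilon_N=0$ and $N=\lfloor T/h\rfloor$ gives $\varepsilon_0 \le N C h\,\mathcal E(h,\dx) \le C T\,\mathcal E(h,\dx)$. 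Finally, since $\hat Y^h_0 = Y_0$ deterministically, $\varepsilon_0 = e^{c(Y_0)h}\,|u(0,\cdot,Y_0)-u^h_0(\cdot,Y_0)|_p \ge |u(0,\cdot,Y_0)-u^h_0(\cdot,Y_0)|_p$, which is the claimed estimate (after renaming the constant).

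**Main obstacle.** The only delicate point is the interchange of the discrete operator $\Pi^h_{\dx}(\hat Y^h_n)$ with the conditional expectation over $\hat Y^h_{n+1}$: one needs $\Pi^h_{\dx}(y)$ to be a \emph{linear} operator applied to a function of $(\cdot,\hat Y^h_{n+1})$, so that $\E[\Pi^h_{\dx}(y)g(\cdot,\hat Y^h_{n+1})\mid \hat Y^h_n=y] = \Pi^h_{\dx}(y)\E[g(\cdot,\hat Y^h_{n+1})\mid\hat Y^h_n=y]$, which holds by linearity and the independence of the chain from the $X$-noises built into the definition of the scheme. Coupled with this, one must justify the operator-norm bound passing through the conditional Jensen step uniformly in the random argument $\hat Y^h_n$; this is exactly where \eqref{stab} and the exponential weight in \eqref{ass_errore} are tailored to fit together, and is the reason Assumption $\mathcal K(p,c,\mathcal E)$ is phrased the way it is. Everything else is the routine Lax–Richtmyer bookkeeping.
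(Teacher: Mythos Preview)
Your proposal is correct and follows essentially the same Lax--Richtmyer argument as the paper. The paper iterates the one-step error relation explicitly to the closed formula
\[
\mathrm{err}^h_0(\cdot,Y_0)=-\sum_{n=0}^{N-1}\E\Big[\Big(\prod_{l=0}^{n-1}\Pi^h_{\dx}(\hat Y^h_l)\Big)\mathcal R^h_n(\cdot,\hat Y^h_n)\Big]
\]
and then applies the stability bound \eqref{stab} termwise, whereas you carry the exponential weight through an inductive inequality on $\varepsilon_n$; the two organizations are equivalent. Two cosmetic slips to fix: the sign of $\mathcal R^h_n$ in your one-step identity should be $-\mathcal R^h_n$ (subtract \eqref{cons} from \eqref{backward-ter0}), and your inequality $(1+c(\hat Y^h_n)h)\,e^{\sum_{l=1}^{n}c(\hat Y^h_l)h}\le e^{\sum_{l=1}^{n+1}c(\hat Y^h_l)h}$ has a shifted index (you are absorbing $c(\hat Y^h_n)$, not $c(\hat Y^h_{n+1})$); shift the weight to $e^{\sum_{l=0}^{n-1}c(\hat Y^h_l)h}$, or equivalently note that the discrepancy with the indexing in \eqref{ass_errore} is harmless up to renaming the constant.
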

\begin{proof}
	Let  $\mathrm{err}^{h}_n (\cdot,\hat{Y}^h_n)$ be the  error at time $nh$, defined by
	$$
	\mathrm{err}^{h}_n(\cdot,\hat{Y}^h_n)=u(nh,\cdot,\hat{Y}^h_n)-u^h_n(\cdot,\hat{Y}^h_n).
	$$
	Note that $\mathrm{err}_{N}^h(\cdot,\hat{Y}_N^h)=0$, because the final condition is the same. By \eqref{cons} and \eqref{backward-ter0}, we can write
	\begin{align*}
	\mathrm{err}^h_n (\cdot,\hat{Y}^h_n)&=  \E[\Pi^h_{\dx}(\hat{Y}^h_n)\mathrm{err}^h_{n+1}(\cdot,\hat{Y}^h_{n+1}) |\hat{Y}^h_n ] -\mathcal{R}_n^h(\cdot ,\hat{Y}^h_n)
	\end{align*}
	and, by iterating,
	$$
	\mathrm{err}^h_0(\cdot,Y_0)= -\sum_{n=0}^{N-1}\E\Big[ \Big(\prod_{l=0}^{n-1} \Pi^h_{\dx}(\hat{Y}^h_l))\Big) \mathcal{R}_n^h(\cdot,\hat{Y}^h_n)\Big],
	$$
	in which we use the convention $ \prod_{l=0}^{-1}(\cdot)=\Id$. We use now \eqref{ass_errore}. For $p\neq \infty$,
	$$
	\begin{array}{l}
	|\mathrm{err}_h^0(\cdot,Y_0)|_{p}
	\leq \sum_{n=0}^{N-1}\Big|\E\Big[ \Big(\prod_{l=0}^{n-1} \Pi^h_{\dx}(\hat{Y}^h_l)\Big) \mathcal{R}_n^h(\cdot,\hat{Y}^h_n)\Big]\Big|_{p}\\
	\leq \sum_{n=0}^{N-1}\E\Big[ \Big|\Big(\prod_{l=0}^{n-1} \Pi^h_{\dx}(\hat{Y}^h_l)\Big) \mathcal{R}_n^h(\cdot,\hat{Y}^h_n)\Big|^p_{p}\Big]^{1/p}\\
	\leq  \sum_{n=0}^{N-1}	\left(	\E\big[e^{\sum_{l=1}^{n}pc(\hat{Y}^h_l)h} |\mathcal{R}_n^h(\cdot,\hat{Y}^h_n)|^p_{p}\big]\right)^{\frac 1 p}\\
	\leq \sum_{n=0}^{N-1}hC\mathcal{E}(h,\dx)
	\leq TC\mathcal{E}(h,\dx).
	\end{array}
	$$
	The case  $p=\infty$ follows the same lines.
\end{proof}
	
	\subsection{ An application: finite difference schemes}\label{sect-finitedifference}	

We specify here some settings ensuring  that the assumptions of  Theorem \ref{convergencebates} are satisfied. In particular, we choose the operator $\Pi^h_{\dx}(y)$ in \eqref{backward-ter0} by means of two different finite difference schemes: the first one allows us to study the convergence in the $l_2$-norm (Section \ref{sect-l2}), while the second one in the $l_\infty$-norm (Section \ref{sect-linf}). For the sake of readability, we consider the case $m=d=\ell=\ell_1=\ell_2=1$. Moreover, hereafter we assume that the coefficients in \eqref{generalsystem} satisfy:
\begin{itemize}
	\item [(a)]
	$\mu=(\mu_X,\mu_Y)^\star$ and $\sigma_X$ have polynomial growth; 
	\item[(b)]
	either $\gamma_X\equiv 0$ (no jumps) or there exists $\varepsilon>0$ such that  $\inf_{y\in\mathcal{D}}|\gamma_X(y)|\geq \varepsilon$ (uniform ellipticity condition). 
\end{itemize} 
Let us stress that (a) is necessary in order to control suitable remaining terms, whereas (b) follows from an appropriate change of variable allowing to set up the quadrature rules (see Remark \ref{rem-gamma} below). In particular, (b) allows us to define the measure $\nu_y$ as follows:
\begin{equation}\label{nuy}
\nu_y(x)=\left\{
\begin{array}{ll}
0 &\mbox{if } \gamma_X\equiv 0,\\
\frac 1{|\gamma_X(y)|}\nu(\frac{x}{\gamma_X(y)}) & \mbox{otherwise},
\end{array}
\right.
\quad y\in\mathcal{D},
\end{equation}
$\nu$ denoting the  density of the L\'evy measure.

\begin{proposition}\label{prop-nu}
	If $\frac{\nu'}{\nu},\frac{\nu''}{\nu}\in L^1(\R,d\nu)$, there exists $c_\nu\geq 0$ such that
	\begin{equation}\label{cnu}
	\sum_{l\in\Z}\nu_y(l\dx)\dx\leq \lambda c_\nu,\quad \forall y\in\mathcal{D}.
	\end{equation}
\end{proposition}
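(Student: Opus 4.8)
The plan is to estimate the sum $\sum_{l\in\Z}\nu_y(l\dx)\dx$ by comparison with the integral $\int_\R \nu_y(x)\,dx=\lambda$, controlling the discretization error via the derivatives of $\nu_y$. First I would reduce everything to the density $\nu$ itself: by the definition \eqref{nuy}, when $\gamma_X\not\equiv 0$ the change of variable $x\mapsto x/\gamma_X(y)$ gives $\nu_y(x)=|\gamma_X(y)|^{-1}\nu(x/\gamma_X(y))$, so $\sum_{l\in\Z}\nu_y(l\dx)\dx = \sum_{l\in\Z}\nu(l\dx/\gamma_X(y))(\dx/|\gamma_X(y)|)$, which is a Riemann-type sum for $\nu$ with mesh $\dx/|\gamma_X(y)|$. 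Because of assumption (b), this mesh is bounded above by $\dx/\varepsilon$, so it suffices to bound $\sum_{k\in\Z}\nu(k\delta)\delta$ uniformly for $0<\delta\le\dx/\varepsilon$ (and when $\gamma_X\equiv 0$ the statement is trivial with $c_\nu=0$, since $\nu_y\equiv 0$).

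Next I would turn the hypotheses $\nu'/\nu,\nu''/\nu\in L^1(\R,d\nu)$ into usable bounds: these say precisely that $\nu'=( \nu'/\nu)\,\nu\in L^1(\R,dx)$ and $\nu''\in L^1(\R,dx)$, hence $\nu\in W^{2,1}(\R)$. In particular $\nu$ is $C^1$ with $\nu,\nu'\to 0$ at $\pm\infty$, and $|\nu'|_{L^1(\R,dx)}$ and $|\nu''|_{L^1(\R,dx)}$ are finite. On each interval $[k\delta,(k+1)\delta]$ the midpoint (or left-endpoint) quadrature error is controlled by $\int_{k\delta}^{(k+1)\delta}|\nu'|$ (for the crude bound) or by $\delta\int_{k\delta}^{(k+1)\delta}|\nu''|$ (for the sharp one); summing over $k\in\Z$ telescopes the right-hand sides to $\delta\,|\nu'|_{L^1}$, respectively $\delta^2\,|\nu''|_{L^1}$. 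Therefore
$$
\Big|\sum_{k\in\Z}\nu(k\delta)\delta-\int_\R\nu(x)\,dx\Big|\le \delta\,|\nu'|_{L^1(\R,dx)},
$$
and since $\int_\R\nu = \lambda$, we get $\sum_{k\in\Z}\nu(k\delta)\delta\le \lambda+\delta\,|\nu'|_{L^1(\R,dx)}$. Plugging $\delta=\dx/|\gamma_X(y)|\le \dx/\varepsilon\le 1/\varepsilon$ (using $\dx<1$) yields $\sum_{l\in\Z}\nu_y(l\dx)\dx\le \lambda+\varepsilon^{-1}|\nu'|_{L^1(\R,dx)}$ uniformly in $y\in\mathcal D$; writing $\lambda c_\nu$ for the right-hand side with $c_\nu:=1+(\lambda\varepsilon)^{-1}|\nu'|_{L^1(\R,dx)}$ gives \eqref{cnu}.

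The only mildly delicate point is making sure the quadrature-error estimate is applied on the right function: one must work with $\nu$ and its mesh $\delta=\dx/|\gamma_X(y)|$ rather than directly with $\nu_y$, because it is the $L^1$-norms of $\nu',\nu''$ — not of $\nu_y',\nu_y''$ — that are furnished by the hypothesis, and these transform with negative powers of $|\gamma_X(y)|$ that are absorbed precisely by the uniform lower bound in (b). Beyond that, the argument is a routine Riemann-sum comparison; I would state the sharper bound with $|\nu''|_{L^1}$ only if a later section needs an $O(\dx^2)$ rate, otherwise the first-order bound above suffices for \eqref{cnu}.
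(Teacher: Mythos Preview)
Your argument is correct and reaches the same conclusion, but by a different and more elementary route than the paper's. The paper applies its Poisson summation estimate (Lemma~\ref{lemma-poisson}\,$(i)$) directly to $g=\nu_y$, obtaining
\[
\sum_{l\in\Z}\nu_y(l\dx)\dx\le \int_\R\nu(x)\,dx+\frac{\dx^2}{12\,|\gamma_X(y)|^2}\int_\R|\nu''(x)|\,dx,
\]
and then invokes the uniform ellipticity $|\gamma_X(y)|\ge\varepsilon$ from assumption (b); the constant $c_\nu$ thus carries the second derivative $|\nu''|_{L^1}$. You instead make the change of variable first, reduce to a Riemann sum for $\nu$ with mesh $\delta=\dx/|\gamma_X(y)|$, and control the quadrature error by the first derivative $|\nu'|_{L^1}$ via a standard endpoint-rule estimate. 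Both routes correctly use that $\nu'/\nu,\nu''/\nu\in L^1(\R,d\nu)$ is equivalent to $\nu',\nu''\in L^1(\R,dx)$, and both rely on (b) in exactly the same way to absorb the negative powers of $|\gamma_X(y)|$.

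What each buys: the paper's use of Poisson summation gives the sharper $O(\dx^2)$ remainder and, more to the point, reuses the very lemma (Lemma~\ref{lemma-poisson}) that is needed anyway in the $l_2$ analysis of Section~\ref{sect-l2}; so there is no extra cost. Your argument is self-contained and avoids Fourier analysis entirely, at the price of an $O(\dx)$ remainder that nonetheless suffices for \eqref{cnu}. Two small remarks on presentation: the step you call ``telescoping'' is really just summing the per-interval bounds $\delta\int_{k\delta}^{(k+1)\delta}|\nu'|$ over $k$ (there is no cancellation), and you silently use $\dx<1$ to make $c_\nu$ a genuine constant; that hypothesis is indeed standing in the paper's convergence theorems, so this is fine but worth making explicit.
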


\begin{proof}
	The proof follows from the technical Lemma \ref{lemma-poisson} below: if $\gamma_X$ is non null, $(i)$ applied to $g(x)=\nu_y(x)$ gives
	$
	\sum_{l\in \Z}\nu_y(l\dx)\dx\leq \int_\R \nu(x)dx+\frac{|\dx|^2}{12|\gamma_X(y)|^2}\int_\R |\nu''(x)|dx.
	$
	Now we use the ``uniformity'' condition $\inf_{y\in\mathcal{D}}|\gamma_X(y)|\geq \varepsilon$, and the statement holds. 
\end{proof}

\begin{lemma}\label{lemma-poisson} Let $g\in C^2(\R)$.
	
	\noindent
	$(i)$ 
	If $g,g',g''\in L^1(\R,dx)$ then 
	\begin{equation}\label{poisson1}
	\Big|\sum_{l\in \Z}g(l\Delta x )\Delta x -\int_\R g(x)dx\Big|\leq \frac {\Delta x ^2}{12} \,|g''|_{L^1(\R,dx)}.
	\end{equation}
	
	\noindent
	$(ii)$ 
	If $g,g', g''\in L^2(\R,dx)$ then
	\begin{equation}\label{poisson2}
	|g|_2^2
	\leq |g|_{L^2(\R,dx)}^2+
	\frac {\Delta x ^2}{6} \,\big(|g'|_{L^2(\R,dx)}^2+|g|_{L^2(\R,dx)}\times|g''|_{L^2(\R,dx)}\big).
	\end{equation}
\end{lemma}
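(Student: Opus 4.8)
\emph{Plan.} The natural tool here is the Poisson summation formula — indeed the constant $\tfrac1{12}$ is exactly $\tfrac1{4\pi^2}\sum_{k\neq0}k^{-2}=\tfrac1{4\pi^2}\cdot\tfrac{\pi^2}{3}$. I would prove $(i)$ directly from Poisson summation and then deduce $(ii)$ by applying $(i)$ to $G=|g|^2$.

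\emph{Proof of $(i)$.} Write $\hat g(\xi)=\int_\R g(x)e^{-i\xi x}\,dx$. Since $g\in C^2(\R)$ and $g,g',g''\in L^1(\R,dx)$, one has $g(x)\to0$ and $g'(x)\to0$ as $|x|\to\infty$; the periodization $x\mapsto\sum_{l\in\Z}g(x+l\Delta x)$ is continuous (as $g\in C^2$ and $g,g'\in L^1$ make the series locally uniformly convergent), and integrating by parts twice gives $\hat g(\xi)=-\xi^{-2}\widehat{g''}(\xi)$ for $\xi\neq0$, hence $|\hat g(\xi)|\le|g''|_{L^1(\R,dx)}\,\xi^{-2}$ and in particular $\hat g\in L^1$. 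Therefore the Poisson summation formula applies:
\begin{equation*}
\Delta x\sum_{l\in\Z}g(l\Delta x)=\sum_{k\in\Z}\hat g\Big(\frac{2\pi k}{\Delta x}\Big),
\end{equation*}
whose $k=0$ term is $\hat g(0)=\int_\R g(x)\,dx$. Subtracting it and bounding the remaining terms,
\begin{equation*}
\Big|\Delta x\sum_{l\in\Z}g(l\Delta x)-\int_\R g(x)\,dx\Big|\le\sum_{k\neq0}\Big|\hat g\Big(\tfrac{2\pi k}{\Delta x}\Big)\Big|\le|g''|_{L^1(\R,dx)}\sum_{k\neq0}\frac{\Delta x^2}{4\pi^2k^2}=\frac{\Delta x^2}{12}\,|g''|_{L^1(\R,dx)},
\end{equation*}
using $\sum_{k\ge1}k^{-2}=\pi^2/6$, which is \eqref{poisson1}.

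\emph{Proof of $(ii)$.} Set $G=|g|^2=g\bar g$, so $G''=g''\bar g+2|g'|^2+g\bar g''$. If $g,g',g''\in L^2(\R,dx)$, then by Cauchy--Schwarz $G,G',G''\in L^1(\R,dx)$ and $|G''|_{L^1(\R,dx)}\le 2|g'|_{L^2(\R,dx)}^2+2|g|_{L^2(\R,dx)}\,|g''|_{L^2(\R,dx)}$. Applying $(i)$ to $G$, and noting $\Delta x\sum_{l\in\Z}G(l\Delta x)=|g|_2^2$ and $\int_\R G=|g|_{L^2(\R,dx)}^2$, we keep the upper bound:
\begin{equation*}
|g|_2^2\le|g|_{L^2(\R,dx)}^2+\frac{\Delta x^2}{12}\,|G''|_{L^1(\R,dx)}\le|g|_{L^2(\R,dx)}^2+\frac{\Delta x^2}{6}\Big(|g'|_{L^2(\R,dx)}^2+|g|_{L^2(\R,dx)}\,|g''|_{L^2(\R,dx)}\Big),
\end{equation*}
which is \eqref{poisson2}.

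\emph{Main obstacle.} The computations are routine; the one point requiring care is justifying the Poisson summation formula under the stated hypotheses $g\in C^2$, $g,g',g''\in L^1$ rather than Schwartz-type decay — i.e.\ checking continuity of the periodization of $g$ and integrability of $\hat g$, both of which follow from the $L^1$-bounds on $g,g',g''$ as above. A more self-contained alternative for $(i)$ writes the left-hand side of \eqref{poisson1} as a sum of midpoint-rule errors on the cells $[(l-\tfrac12)\Delta x,(l+\tfrac12)\Delta x]$, each represented via a nonnegative Peano kernel; this is elementary but produces the suboptimal constant $\tfrac18$, so the Poisson-summation argument is the one to retain. The argument above is unchanged for complex-valued $g$, using $|g|^2=g\bar g$ throughout.
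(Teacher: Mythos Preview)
Your proof is correct and follows essentially the same route as the paper: Poisson summation plus two integrations by parts for $(i)$, then applying $(i)$ to $g^2$ for $(ii)$. One cosmetic point: since $|g|_2^2=\Delta x\sum_l |g(X_0+l\Delta x)|^2$ on the grid $\mathcal{X}=\{X_0+l\Delta x\}_l$, in $(ii)$ you should apply $(i)$ to the translate $x\mapsto |g(X_0+x)|^2$ (as the paper does) rather than to $|g|^2$ directly---the norms are translation-invariant, so nothing changes.
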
	

\begin{proof}
	We first recall the Poisson summation formula. It is worldwide famous but is usually written on the Schwartz space, we use here the following version (Appendix \ref{SU-Poisson} contains the detailed proof):  if $\varphi\in C^2(\R)$ with $\varphi,\varphi',\varphi''\in L^1(\R,dx)$ then
	\begin{equation}\label{poisson-formula}
	\sum_{n\in \Z}\varphi(n)=\int_{\R} \varphi(x)dx+\sum_{n\in\Z,n\neq 0}\int_{\R}\varphi(x)e^{-2\pi \ii n x}dx.
	\end{equation}
	
	$(i)$ 
	We apply \eqref{poisson-formula} to $\varphi(x)=g(x\Delta x )$. So,
	$$
	\begin{array}{l}
	\sum_{n\in \Z}g(n\Delta x )\Delta x -\int_{\R} g(x)dx=
	\sum_{n\in\Z,n\neq 0} 
	\int_{\R}g(x)e^{-2\pi \ii n x/\Delta x  }dx\\
	\ \ =\sum_{n\in\Z,n\neq 0} 
	\frac{\Delta x ^2}{(2\pi \ii n)^2}\int_{\R}g''(x)e^{-2\pi \ii n x/\Delta x  } dx,
	\end{array}
	$$
	the latter inequality coming from the integration by parts formula. The statement holds by recalling that $\sum_{n\geq 1}\frac{1}{n^2}=\frac{\pi^2}{6}$.
	
	\smallskip
	
	\noindent
	$(ii)$
	\eqref{poisson2} immediately follows by applying \eqref{poisson1} to the function $x\mapsto g^2(X_0+x)$. This statement  will be used to handle the error in $l_2$-norm coming from suitable Taylor's expansions and from the quadrature approximation.  
	%
\end{proof}


\subsubsection{Convergence in $l_2$-norm}\label{sect-l2}
	
We study here the hybrid procedure introduced in \cite{bctz} for the Bates model.
Recall that, for $y\in\D$, $\Pi^h_{\dx}(y)$ gives the numerical solution on $\mathcal{X}=\{x_i=X_0+i\Delta x\}_{i\in \Z}$ a time $nh$ to the PIDE \eqref{PDE-barug-h}, the operator $\L^{(y)}$ therein being given in \eqref{genh}. It is clear that the solution $v$ of \eqref{PDE-barug-h} depends on $y$ and $\zeta$ as well, but these are just parameters (and not variables of the PIDE), so for simplicity we drop here such dependence.
So, we split the operator $	\L^{(y)}v(t,x)=\L_{\mbox{{\tiny diff}}}^{(y)}v(t,x)+\L_{\mbox{{\tiny int}}}^{(y)}v(t,x)$ in its differential and integral part:
\begin{align}
&\label{L_d}
\L_{\mbox{{\tiny diff}}}^{(y)}v(t,x)=\mu_X(y)\partial_x v(t,x) +\frac 12\sigma_X^2(y)\partial^2_x v(t,x),\\
\label{L_i}
&\L_{\mbox{{\tiny int}}}^{(y)}v(t,x)
=\int \big(v(t,x+\gamma_X(y)z)-v(t,x)\big)	\nu(z)dz.
\end{align} 
%
%
%
%
We use the central finite difference scheme to solve $\L^{(y)}_{\mbox{{\tiny diff}}}v$ and the trapezoidal rule in order to approximate the integral term $\L^{(y)}_{\mbox{{\tiny int}}}v$. Applying an implicit-explicit method in time, we obtain an approximating solution $v^{n}=(v^n_j)_{j\in\Z}\,:\,\mathcal{X}\to \R$ to the PIDE \eqref{PDE-barug-h} given by
\begin{equation}\label{equaz}
A^h_{\dx}(y)v^n=B^h_{\dx}(y)v^{n+1},
\end{equation}
where the linear operators $A^h_{\dx}(y)$ 
is defined as
\begin{equation}\label{A}
(A^h_{\dx})_{ij}(y)=\begin{cases}
\alpha^h_{\dx}(y)-\beta^h_{\dx}(y),\qquad &\mbox{ if }i=j+1,\\
1+2\beta^h_{\dx}(y),\qquad &\mbox{ if }i=j,\\
-\alpha^h_{\dx}(y)-\beta^h_{\dx}(y),\qquad &\mbox{ if }i=j-1,\\
0 &\mbox{ if }|i-j|>1
\end{cases},
\end{equation} 
with
\begin{equation}\label{alpha-beta}
\alpha^h_{\dx}(y)=\frac{h}{2\dx} \mu_X(y), \qquad \beta^h_{\dx}(y)=\frac{h}{2\dx^2}\sigma_X^2(y).
\end{equation}
Moreover, we choose the approximation  $B^h_{\dx}(y)$ for $\L_{\mbox{{\tiny int}}}$ in order to work on the same numerical  grid $\mathcal{X}$. This is achievable by using  the change of variable in $\L^{(y)}_{\mbox{{\tiny int}}}$:
$\L_{\mbox{{\tiny int}}}^{(y)}v(t,x)
=\int \big(v(t,x+\zeta)-v(t,x)\big)	\nu_y(\zeta)d\zeta$, $\nu_y$ being defined in \eqref{nuy}. Then we get
\begin{equation}\label{B}
(B^h_{\dx})_{ij}(y)=\begin{cases}
h\nu_y((j-i)\dx)\dx & \mbox{ if }j\neq i,\\
1+h\Big(\nu_y(0)\dx-\sum_{l\in\Z} \nu_y(l\dx)\dx\Big)	&\mbox{ if }i=j .
\end{cases}
\end{equation}
Note that $B^h_{\dx}(y)=\Id$ if $\gamma_X\equiv 0$.

\begin{remark}\label{rem-gamma} The above construction for $B^h_{\dx}(y)$ justifies the ``uniformly ellipticity'' requirement for $\gamma_X$. One could drop this assumption by avoiding the change of variable in $\L_{\mbox{{\tiny int}}}$. But this would bring to the use of a numerical grid depending on $y$ and therefore, the introduction of suitable interpolations. As a consequence, one would have a complication of the numerical scheme, the introduction of technical details and further notations. 
\end{remark}

The operators $A^h_{\dx}(y)$ and $B^h_{\dx}(y)$ in \eqref{equaz} and \eqref{B} respectively, have the following properties.
\begin{lemma}\label{norma}
	For every  $y\in \D$, $A^h_{\dx}(y):l_2(\mathcal{X})\rightarrow l_2(\mathcal{X})$ is  invertible and moreover, 
	$$\sup_{y\in\mathcal{D}}| (A^h_{\dx})^{-1}(y)|_2\leq1.
	$$
	And if $\frac{\nu'}{\nu},\frac{\nu''}{\nu}\in L^1(\R,d\nu)$ then $\sup_{y\in\mathcal{D}}|B^h_{\dx}(y)|_2$ $\leq 1+2\lambda c_\nu h$, $c_\nu$ being defined in \eqref{cnu}.
\end{lemma}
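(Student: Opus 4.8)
The plan is to diagonalise both matrices by means of the discrete Fourier transform. Since the weight $\dx$ entering the norm $|\cdot|_2$ of $l_2(\mathcal{X})$ is a fixed positive constant, it is irrelevant for operator norms, so I identify $l_2(\mathcal{X})$ with $l_2(\Z)$ and use the unitary transform $\mathcal{F}\colon l_2(\Z)\to L^2((-\pi,\pi])$, $(\mathcal{F}v)(\xi)=\sum_{k\in\Z}v_ke^{-ik\xi}$. Both $A^h_{\dx}(y)$ and $B^h_{\dx}(y)$ are translation invariant (their entries depend only on $i-j$), hence convolution (Laurent) operators on $l_2(\Z)$: for $A^h_{\dx}(y)$ this is trivial since it is a finite band, and for $B^h_{\dx}(y)$ it holds because its off-diagonal kernel $(h\nu_y(m\dx)\dx)_{m}$ is absolutely summable, by Proposition~\ref{prop-nu}. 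I would then recall the standard fact that $\mathcal{F}$ conjugates such an operator to multiplication by its symbol, so that its $l_2$ operator norm equals the sup norm of the symbol and it is invertible exactly when the symbol is bounded away from $0$, the inverse then having norm $1/\inf|\text{symbol}|$. This reduces the statement to estimating two explicit trigonometric polynomials, uniformly in $y$.

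For $A^h_{\dx}(y)$: writing $\alpha=\alpha^h_{\dx}(y)$, $\beta=\beta^h_{\dx}(y)$ and reading off the entries from \eqref{A}, one gets the symbol
\[
a(\xi)=(\alpha-\beta)e^{-i\xi}+(1+2\beta)-(\alpha+\beta)e^{i\xi}=1+2\beta(1-\cos\xi)-2i\alpha\sin\xi .
\]
By \eqref{alpha-beta}, $\beta=\frac{h}{2\dx^2}\sigma_X^2(y)\ge 0$, so $\mathrm{Re}\,a(\xi)=1+2\beta(1-\cos\xi)\ge 1$, whence $|a(\xi)|^2=(1+2\beta(1-\cos\xi))^2+4\alpha^2\sin^2\xi\ge 1$ for every $\xi$. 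Therefore $A^h_{\dx}(y)$ is invertible on $l_2(\mathcal{X})$ and $|(A^h_{\dx})^{-1}(y)|_2=\sup_\xi|a(\xi)|^{-1}\le 1$; since only $\beta\ge 0$ entered, this bound is uniform in $y\in\D$.

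For $B^h_{\dx}(y)$: setting $c_m=h\,\nu_y(m\dx)\,\dx\ge 0$, the definition \eqref{B} rewrites as $(B^h_{\dx}(y)v)_i=v_i+\sum_{m\in\Z}c_m(v_{i+m}-v_i)$, so its symbol is $b(\xi)=1+\sum_{m\in\Z}c_m(e^{im\xi}-1)$, the series converging absolutely because $\sum_m c_m=h\sum_m\nu_y(m\dx)\dx\le \lambda c_\nu h$ by \eqref{cnu}. Hence $|b(\xi)|\le 1+\sum_m c_m|e^{im\xi}-1|\le 1+2\sum_m c_m\le 1+2\lambda c_\nu h$ for all $\xi$, and taking the supremum over $\xi$ and then over $y\in\D$ (the constant $c_\nu$ in \eqref{cnu} being independent of $y$) gives $\sup_{y\in\D}|B^h_{\dx}(y)|_2\le 1+2\lambda c_\nu h$. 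The only point that needs care in the write-up is the justification of the symbol calculus, i.e.\ that both operators are genuinely bounded convolution operators on $l_2(\Z)$ (for $B$ this is precisely where the summability \eqref{cnu} is used), so that the identity ``operator norm $=$ sup of symbol'' and the invertibility criterion are legitimate; everything else is elementary estimation of trigonometric polynomials, uniform in $y$ exactly because the constants $\beta\ge 0$ and $c_\nu$ are.
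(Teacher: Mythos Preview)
Your proof is correct and follows essentially the same approach as the paper's: both diagonalise the Toeplitz operators via the discrete Fourier transform, compute the symbols explicitly (your $a(\xi)=1+2\beta(1-\cos\xi)-2i\alpha\sin\xi$ and $b(\xi)=1+\sum_m c_m(e^{im\xi}-1)$ coincide with the paper's $\psi$ and the multiplier for $B$ up to the convention for the Fourier variable), and then bound $|a(\xi)|\ge 1$ and $|b(\xi)|\le 1+2\lambda c_\nu h$ using $\beta\ge 0$ and \eqref{cnu} respectively, invoking Parseval to transfer these bounds to $l_2$ operator norms.
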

\begin{proof}
	Let $ \mathcal{F}\,:\,  l_2(\mathcal{X})\to L^2([0,2\pi), dx)$ denote the Fourier transform: 	$ \mathcal{F}(\varphi) (\theta) = \frac{\dx}{\sqrt{2\pi}}\sum_{
		j\in \Z} \varphi_j e^{-\ii j\Delta x\theta}$, $\theta\in [0,2\pi)$, $\varphi\in l_2(\mathcal{X})$. 
	
	Fix $y\in \D $ and $w \in l_2(\mathcal{X})$. $v\in  l_2(\mathcal{X})$ satisfies  $A^h_{\dx}(y)v=w$ iff 
	$ \mathcal{F}(A^h_{\dx}(y)v)= \mathcal{F}(w)$. Straightforward computations give (see e.g. the proof of Theorem 5.1 in \cite{bctz})
	$ \mathcal{F}(A^h_{\dx}(y)v)=\psi \times   \mathcal{F}(v)$, with $\psi(\theta)=(\alpha^h_{\dx}(y)-\beta^h_{\dx}(y))e^{-\ii\theta\dx}+1+2\beta^h_{\dx}(y)-(\alpha^h_{\dx}(y)+\beta^h_{\dx}(y))e^{\ii\theta\dx}$. 
	It can be easily seen that $|\psi(\theta)|\geq 1+2\beta^h_{\dx}(y)(1-\cos(\theta\dx))$ $
	\geq 1$. Hence $ \mathcal{F}(v) = \mathcal{F} (w)/\psi\in L^2([0,2\pi),dx)$ and its inverse Fourier transform uniquely defines the solution $v\in l_2(\mathcal{X})$ to $A^h_{\Delta x}(y)v=w$.
	Thus $A^h_{\dx}$ is invertible. Moreover, we obtain 
	$	|  \mathcal{F}(v)|_{L^2([0,2\pi),dx)}$ $\leq | \mathcal{F} (w)|_{L^2([0,2\pi),dx)}$.
	By the Parseval identity 
	we get $| (A^h_{\dx})^{-1}(y)w|_2 \leq 	| w|_2$, so $| (A^h_{\dx})^{-1}(y)|_2\leq 1$.
	Finally, for $w\in l_2(\mathcal{X})$ straightforward computations give
	$$
	\mathcal{F}(B^h_{\dx}(y)w)(\theta)=\Big( 1+ h\dx\sum_l  \nu_y(l\dx) (e^{\ii l\theta} -  1)       \Big) \mathcal{F} (w) (\theta).
	$$
	Then, $| \mathcal{F}(B^h_{\dx}(y)w) |_{L^2([0,2\pi),dx)}\leq ( 1+ 2\lambda c_\nu h)| \mathcal{F}(w)|_{L^2([0,2\pi),dx)}$ because  \eqref{cnu} holds. By the Parseval relation, $|B^h_{\dx}(y)w|_{2}\leq ( 1+ 2\lambda c_\nu h)|w|_{2}$, which concludes the proof.
\end{proof}

We can now state the convergence result in $l_2(\mathcal{X})$, saying that the rate of convergence is of  the second order in space, because of the choice of a second order finite difference scheme, and of first order in time, as it is natural also for the presence of the approximating Markov chain $\hat{Y}^h$ (see   Theorem \ref{conv_T}).
\begin{theorem}
	\label{conv-H}
	
	Let $u$ be defined in \eqref{european} and $(u^h_n)_{n=0,\ldots,N}$ be given by \eqref{backward-ter0} with the choice
	$$
	\Pi^h_{\dx}(y)=(A^h_{\dx})^{-1}B^h_{\dx}(y),
	$$
	$A^h_{\dx}(y)$ and $B^h_{\dx}(y)$ being given in \eqref{A} and \eqref{B} respectively. Assume that
	\begin{itemize}
		\item $\frac{\nu'}\nu,\frac{\nu''}\nu\in L^2(\R,d\nu)$;
		\item the Markov chain $(\hat{Y}^h_n)_{n=0,\dots, N}$ satisfies assumptions $\mathcal{H}_1$ and $\mathcal{H}_2$;
		\item $u\in  C^{2,6}_{\pol, T}( \R, \D)$.
	\end{itemize}
	Then, there exist $\bar h,C>0$ such that for every $h<\bar h$ and $\dx<1$ one has
	\begin{equation}\label{conv-1}
	| u(0,\cdot,Y_0)-u^h_{0}(\cdot,Y_0)|_{2} \leq CT(h+\dx^2).
	\end{equation}
\end{theorem}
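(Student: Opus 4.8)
The plan is to apply Theorem \ref{convergencebates} with $p=2$, with $c(y)\equiv 2\lambda c_\nu$ (a \emph{constant}, $c_\nu$ being the one of Proposition \ref{prop-nu}) and with $\mathcal{E}(h,\dx)=h+\dx^2$; so the real task is to check that $\Pi^h_{\dx}(y)=(A^h_{\dx})^{-1}(y)\,B^h_{\dx}(y)$ satisfies Assumption $\mathcal{K}(2,c,\mathcal{E})$. The stability bound \eqref{stab} is immediate from Lemma \ref{norma}: $|\Pi^h_{\dx}(y)|_2\leq|(A^h_{\dx})^{-1}(y)|_2\,|B^h_{\dx}(y)|_2\leq 1\cdot(1+2\lambda c_\nu h)=1+c(y)h$. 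Since $c$ is constant, the weight $e^{\sum_{l=1}^{n}c(\hat Y^h_l)h}$ appearing in \eqref{ass_errore} is bounded by $e^{2\lambda c_\nu T}$, so \eqref{ass_errore} reduces to proving $\big\|\,|\mathcal{R}^h_n(\cdot,\hat Y^h_n)|_2\,\big\|_2\leq Ch(h+\dx^2)$ for the remainder in \eqref{cons}. I would obtain this by splitting the one-step error into a \emph{discretization} part and a \emph{probabilistic} part.

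\emph{Discretization part.} Fix $y,\zeta\in\D$ and let $(t,x)\mapsto v(t,x;y,\zeta)$ solve the frozen Cauchy problem \eqref{PDE-barug-h}, so that $\Pi^h_{\dx}(y)u((n+1)h,\cdot,\zeta)(x)$ is exactly the one-step finite-difference output whose exact counterpart is $v(nh,x;y,\zeta)$ (recall \eqref{phi}). Plugging $v$ into the scheme \eqref{equaz}: the centered differences in $A^h_{\dx}(y)$ reproduce $\Id-h\L^{(y)}_{\mbox{{\tiny diff}}}$ up to an $l_2$-truncation error of order $h\dx^2$ (from third/fourth $x$-derivatives of $v$, controlled in the discrete $l_2(\mathcal{X})$-norm via the Poisson-summation estimate Lemma \ref{lemma-poisson}$(ii)$, using $u\in C^{2,6}_{\pol,T}(\R,\D)$), while $B^h_{\dx}(y)$ is the lattice-quadrature approximation of $\Id+h\L^{(y)}_{\mbox{{\tiny int}}}$, with an $l_2$-error of order $h\dx^2$ again by Lemma \ref{lemma-poisson} together with $\nu'/\nu,\nu''/\nu\in L^2(\R,d\nu)$ (uniformly in $y$, by the ellipticity of $\gamma_X$ and Proposition \ref{prop-nu}). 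Since $\partial_tv=-\L^{(y)}v=-(\L^{(y)}_{\mbox{{\tiny diff}}}+\L^{(y)}_{\mbox{{\tiny int}}})v$, the two $\L^{(y)}$-contributions cancel up to an $O(h^2)$ time-Taylor remainder, so the one-step residual is of order $h(h+\dx^2)$ in $l_2$, with a constant polynomial in $(y,\zeta)$. Applying $(A^h_{\dx})^{-1}(y)$ — of $l_2$-norm $\leq1$ by Lemma \ref{norma} — yields
\[
\Pi^h_{\dx}(y)u((n+1)h,\cdot,\zeta)(x)=v(nh,x;y,\zeta)+\rho^h_n(x;y,\zeta),\qquad |\rho^h_n(\cdot;y,\zeta)|_2\leq Ch(h+\dx^2)(1+|y|^a+|\zeta|^a).
\]

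\emph{Probabilistic part.} It remains to show $\E[v(nh,x;\hat Y^h_n,\hat Y^h_{n+1})\mid\hat Y^h_n]=u(nh,x,\hat Y^h_n)+O(h^2)$ in the relevant norm. Writing $y=\hat Y^h_n$ and using again $\partial_tv=-\L^{(y)}v$, one has $v(nh,x;y,\zeta)=u((n+1)h,x,\zeta)+h\,\L^{(y)}u((n+1)h,\cdot,\zeta)(x)+O(h^2)$, with $\L^{(y)}$ as in \eqref{genh}. Taking conditional expectation over $\hat Y^h_{n+1}$ and Taylor-expanding in the last variable around $y$: in the first term a fourth-order expansion combined with the local moments \eqref{momento1-gen}--\eqref{momento3-gen} of $\mathcal{H}_1$ (crucially, the third local moment is $O(h^2)$) and the increment bound \eqref{stima_incremento-gen} of $\mathcal{H}_2$ gives $u((n+1)h,x,y)+h\big(\tfrac12 a_Y\partial_y^2+\mu_Y\partial_y\big)u((n+1)h,x,y)+O(h^2)$; in the second term a first-order expansion suffices and gives $h\,\L^{(y)}u((n+1)h,\cdot,y)(x)+O(h^2)$. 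Because $\sigma$ is block-diagonal, $\L$ splits as the sum of the $x$-operator $\L^{(y)}$ of \eqref{genh} and the $y$-operator $\tfrac12 a_Y(y)\partial_y^2+\mu_Y(y)\partial_y$, both frozen at the same $y$, so the two $O(h)$ terms assemble into $h\,\L u((n+1)h,x,y)$; replacing $(n+1)h$ by $nh$ (another $O(h^2)$) and a first-order time-Taylor on $u$ give $u(nh,x,y)+h(\partial_tu+\L u)(nh,x,y)+O(h^2)=u(nh,x,y)+O(h^2)$ by the PIDE \eqref{PDEB}. Hence $\mathcal{R}^h_n(x,\hat Y^h_n)=\E[\rho^h_n(x;\hat Y^h_n,\hat Y^h_{n+1})\mid\hat Y^h_n]+(\text{the above }O(h^2)\text{ probabilistic term})$; taking $|\cdot|_2$ in $x$ (passing from the $L^2(\R,dx)$-bounds built into $C^{2,6}_{\pol,T}$ to the discrete $l_2(\mathcal{X})$-norm costs only a constant factor since $\dx<1$, by Lemma \ref{lemma-poisson}$(ii)$) and then $\|\cdot\|_2$ in $\Omega$, all polynomial factors in $\hat Y^h_n,\hat Y^h_{n+1}$ are absorbed through Cauchy--Schwarz/Hölder by the moment bounds \eqref{stima_momento-gen} and \eqref{stima-f}--\eqref{stima-j}; this gives $\big\|\,|\mathcal{R}^h_n(\cdot,\hat Y^h_n)|_2\,\big\|_2\leq Ch(h+\dx^2)$, i.e. \eqref{ass_errore} with $\mathcal{E}=h+\dx^2$. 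Theorem \ref{convergencebates} then yields \eqref{conv-1}.

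The main obstacle is the discretization part, specifically obtaining the truncation error of the implicit-explicit scheme in the $l_2(\mathcal{X})$-norm (not pointwise): this is exactly where the somewhat unusual hypotheses $u\in C^{2,6}_{\pol,T}$ (six $L^2$-integrable space derivatives) and $\nu'/\nu,\nu''/\nu\in L^2(\R,d\nu)$ enter, through Lemma \ref{lemma-poisson}$(ii)$, which converts the Euler--Maclaurin remainders of the centered differences and of the lattice quadrature into $\dx^2$ times $L^2$-norms of higher derivatives of $v$ (hence of $u$). A subsidiary nuisance is the bookkeeping, in the weighted $L^2(\Omega)$-norm of \eqref{ass_errore}, of the polynomial growth in the random frozen parameter $\hat Y^h_n$ and in $\hat Y^h_{n+1}$; this is routine once one exploits that $c$ is constant.
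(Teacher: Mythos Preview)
Your proposal is correct and follows essentially the same route as the paper: verify Assumption $\mathcal{K}(2,2\lambda c_\nu,h+\dx^2)$ via Lemma~\ref{norma} for stability and via Taylor expansion, the PIDE \eqref{PDEB}, the moment hypotheses $\mathcal{H}_1$--$\mathcal{H}_2$, and Lemma~\ref{lemma-poisson}$(ii)$ for the $l_2$ consistency bound, then invoke Theorem~\ref{convergencebates}.

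The only organisational difference is that you insert the frozen-coefficient solution $v$ of \eqref{PDE-barug-h} as an intermediate object and split the one-step error into a discretisation part ($\Pi^h_{\dx}(y)u((n+1)h,\cdot,\zeta)$ versus $v(nh,\cdot;y,\zeta)$) and a probabilistic part ($\E[v(nh,\cdot;\hat Y^h_n,\hat Y^h_{n+1})\mid\hat Y^h_n]$ versus $u(nh,\cdot,\hat Y^h_n)$), whereas the paper multiplies \eqref{cons} through by $A^h_{\dx}$ to obtain \eqref{mmm} and performs a single combined Taylor expansion of both sides directly on $u$ in $(t,x,y)$, collecting the remainders $R_1,\dots,R_4,S_1,S_2$ all at once. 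The two arguments are mathematically equivalent and use the same ingredients; your decomposition is perhaps cleaner conceptually, while the paper's version avoids having to transfer regularity from $u$ to $v$ (which is harmless here since $\hat X^{nh,x}_{(n+1)h}(y)=x+\text{shift}$, so $x$-derivatives and $L^2(\R,dx)$-norms pass through the frozen semigroup by translation invariance).
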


\begin{proof}
The proof follows from   Theorem \ref{convergencebates} once we prove that Assumption $\mathcal{K}(2,2\lambda c_\nu,h+\dx^2)$ holds.

First, Lemma \ref{norma} gives $|\Pi^h_{\dx}(y)|_2\leq |(A^h_{\dx})^{-1}(y)|_2|B^h_{\dx}(y)|_2\leq 1+2\lambda c_\nu h$, so \eqref{stab} holds with $c(y)=2\lambda c_\nu$. We prove now \eqref{ass_errore} with $p=2$ and $\mathcal{E}(h,\dx)=h+\dx^2$. We first note that \eqref{cons} equals to 
\begin{equation}
\label{mmm}
\begin{array}{l}
\E\big[B^h_{\dx}(\hat{Y}^h_n)u((n+1)h, \cdot, \hat{Y}^h_{n+1})(x)\mid \hat{Y}^h_n\big] \\ =A^h_{\dx}(\hat{Y}^h_n)u(nh,\cdot,\hat{Y}^h_n)(x)+A^h_{\dx}(\hat{Y}^h_n)\mathcal{R}^h_n(\cdot,\hat{Y}^h_n)(x).
\end{array}
\end{equation}

\noindent\textbf{Step 1. Taylor expansion of the l.h.s. of \eqref{mmm}.} We set
\begin{equation}\label{Bapplied}
\begin{array}{l}
I_1= B^h_{\dx}(\hat{Y}^h_n)u((n+1)h, \cdot, \hat{Y}^h_{n+1})(x_i) \\
=u((n+1)h,x_i,\hat{Y}^h_{n+1}) \\
+h\sum_l \nu_{\hat{Y}^h_n}(l\dx )\Big(u((n+1)h,x_i+l\dx,\hat{Y}^h_{n+1})-u((n+1)h,x_{i},\hat{Y}^h_{n+1})\Big)\dx.
\end{array}
\end{equation}
In the first term of the above r.h.s. we apply several Taylor's expansion: of $t\mapsto u(t,x_i,\hat{Y}^h_{n+1}) $  around $nh$ up to  order 1,
of $y\mapsto u(nh,x_i,y)$ around $\hat{Y}^h_n$ up to order 3 and of $y\mapsto \partial_tu(nh,x_i,y)$ around $\hat{Y}^h_n$ up to order 1. Rearranging the terms we obtain
$$
\begin{array}{l}
u((n+1)h,x_i,\hat{Y}^h_{n+1})
=u(nh,x_i,\hat{Y}^h_{n}) \\
+ \partial_tu(nh,x_i,\hat{Y}^h_{n})h+\partial_yu(nh,x_i\hat{Y}^h_{n})(\hat{Y}^h_{n+1}-\hat{Y}^h_{n})
+\frac 12 \partial_y^{2}u(nh,x_i,\hat{Y}^h_{n})(\hat{Y}^h_{n+1}-\hat{Y}^h_{n})^{2}\\
+\partial_y\partial_tu(nh,x_i,\hat{Y}^h_{n})\,h(\hat{Y}^h_{n+1}-\hat{Y}^h_{n})
+\frac 16\partial_y^3u(nh,x_i,\hat{Y}^h_{n})(\hat{Y}^h_{n+1}-\hat{Y}^h_{n})^3\\
+R_{1}(n,h,x_i,\hat{Y}^h_{n},\hat{Y}^h_{n+1}),
\end{array}
$$
where $R_1$  is given by
\begin{equation}\label{R1}
\begin{array}{ll}
&	R_{1}(n,h,x_i,\hat{Y}^h_{n},\hat{Y}^h_{n+1})
=h^2\int_0^1(1-\tau)\partial^2_tu(nh+\tau h,x_i,\hat{Y}^h_{n+1})d\tau\\
&	\quad + \frac{(\hat{Y}^h_{n+1}-\hat{Y}^h_n)^4} 6 \int_0^1(1-\zeta)^3\partial^4_yu(nh,x_i,\hat{Y}^h_n+\zeta(\hat{Y}^h_{n+1}-\hat{Y}^h_n))d\zeta\\
&	\quad +h(\hat{Y}^h_{n+1}-\hat{Y}^h_n)^2\int_0^1(1-\zeta)\partial_t\partial^2_yu(nh,x_i,\hat{Y}^h_n+\zeta(\hat{Y}^h_{n+1}-\hat{Y}^h_n))d\zeta.
\end{array}
\end{equation}

For the second term in the r.h.s. of \eqref{Bapplied},  we stop the Taylor expansion of   $t\mapsto u((n+1)h,x_i+l\dx,\hat{Y}^h_{n+1})$ around $nh$ at order 0  and of $y\mapsto u(nh,x_i+l\dx,y)$ around $\hat{Y}^n_h$ at order 1, obtaining
$$
\begin{array}{l}
h\sum_l \nu_{\hat{Y}^h_n}(l\dx)\big[u((n+1)h,x_i+l\dx,\hat{Y}^h_{n+1})-u((n+1)h,x_{i},\hat{Y}^h_{n+1})\big]\dx  \\
=h\sum_l \nu_{\hat{Y}^h_n}(l\Delta x)\big[u(nh,x_i+l\dx,\hat{Y}^h_{n})-u(nh,x_{i},\hat{Y}^h_{n})\big]\dx \\
+h(\hat{Y}^h_{n+1}-\hat{Y}^h_{n})\sum_l \nu_{\hat{Y}^h_n}(\l\dx)\big[\partial_y u(nh,x_i+l\dx,\hat{Y}^h_{n})-\partial_y u(nh,x_{i},\hat{Y}^h_{n})\big]\dx \\
+R_2(n,h,x_i,\hat{Y}^h_{n},\hat{Y}^h_{n+1}),
\end{array}
$$	
where $R_2$ contains the integral terms:
\begin{equation}\label{R2}
\begin{array}{l}
R_2(n,h,x_i,\hat{Y}^h_{n},\hat{Y}^h_{n+1})= 
h^2\sum_l \nu_{\hat{Y}^h_n}(l\Delta x)\dx\times\\
\times \int_0^1(1-\tau)\big[\partial_tu(nh+\tau h,x_i+l\dx,\hat{Y}^h_{n+1})-\partial_tu(nh+\tau h,x_{i},\hat{Y}^h_{n+1})\big]d\tau \\
+h (\hat{Y}^h_{n+1}-\hat{Y}^h_{n})^{2} \sum_l \nu_{\hat{Y}^h_n}(\l\dx)\dx\times \\
\times \!\!\int_0^1\!(1-\zeta)\!\big[\!\partial_yu(nh ,x_i\!+\!l\dx,\hat{Y}^h_n\!+\!\zeta(\hat{Y}^h_{n+1}\!-\!\hat{Y}^h_n))\!-\!\partial_yu(nh,x_{i},\hat{Y}^h_n\!+\!\zeta(\hat{Y}^h_{n+1}\!-\!\hat{Y}^h_n))\!\big]\!d\zeta .
\end{array}
\end{equation}
By resuming, we obtain
\begin{equation}\label{I1}
\begin{array}{rl}
I_1=
&	u(nh,x_i,\hat{Y}^h_{n})
+ \partial_tu(nh,x_i,\hat{Y}^h_{n})h+\partial_yu(nh,x_i,\hat{Y}^h_{n})(\hat{Y}^h_{n+1}-\hat{Y}^h_{n})\\
&+\frac 12 \partial_y^{2}u(nh,x_i,\hat{Y}^h_{n})(\hat{Y}^h_{n+1}-\hat{Y}^h_{n})^{2} \\
&+h\dx\sum_l \nu_{\hat{Y}^h_n}(l\dx )\big[u(nh,x_i+l\dx,\hat{Y}^h_{n})-u(nh,x_{i},\hat{Y}^h_{n})\big]\\
&+\sum_{i=1}^2 R_{i}(n,h,x_i,\hat{Y}^h_{n},\hat{Y}^h_{n+1}) + S(n,h,x_i,\hat{Y}^h_{n},\hat{Y}^h_{n+1}),
\end{array}
\end{equation}
where 
\begin{equation}
\label{S}
\begin{array}{l}
S(n,h,x_i,\hat{Y}^h_{n},\hat{Y}^h_{n+1}) \\
=	\partial_y\partial_tu(nh,x_i,\hat{Y}^h_{n})\,h(\hat{Y}^h_{n+1}-\hat{Y}^h_{n})
+\frac 16\partial_y^3u(nh,x_i,\hat{Y}^h_{n})(\hat{Y}^h_{n+1}-\hat{Y}^h_{n})^3 \\
+h(\hat{Y}^h_{n+1}-\hat{Y}^h_{n})\sum_l \nu_{\hat{Y}^h_n}(l\dx)\big[\partial_yu(nh,x_i+l\dx,\hat{Y}^h_{n})-\partial_yu(nh,x_{i},\hat{Y}^h_{n})\Big]\dx.
\end{array}
\end{equation}
\noindent
\textbf{Step 2. Taylor expansion  of the first addendum in the r.h.s. of \eqref{mmm}}. We set
$$
\begin{array}{rl}
I_2=&A^h_{\dx}u(nh,\cdot,\hat{Y}^h_n)(x_i) \\
=&(\alpha^h_{\dx}(\hat{Y}^h_n)-\beta^h_{\dx}(\hat{Y}^h_n))u(nh,x_{i-1},\hat{Y}^h_n) \\
&+(1+2\beta^h_{\dx}(\hat{Y}^h_n))u(nh,x_{i},\hat{Y}^h_n)
-(\alpha^h_{\dx}(\hat{Y}^h_n)+\beta^h_{\dx}(\hat{Y}^h_n))u(nh,x_{i+1},\hat{Y}^h_n).
\end{array}
$$
We expand with Taylor $x\mapsto u(nh,x,\hat{Y}^h_n)$  around $x_i$ up to order 3
and we insert  the values of $\alpha^h_{\dx}$ and $\beta^h_{\dx}$ in \eqref{alpha-beta}. Rearranging the terms we get 
\begin{equation}\label{I2}
\begin{array}{l}
I_2=
u(nh,x_{i},\hat{Y}^h_n)
-h\mu_X(\hat{Y}^h_n)\partial_xu(nh,x_{i},\hat{Y}^h_n)
-\frac 12 \,h \sigma^2_X(\hat{Y}^h_n)\partial^2_xu(nh,x_{i},\hat{Y}^h_n)\\
\ \ \ +R_3(n,h,x_i,\hat{Y}^h_{n},\hat{Y}^h_{n+1})
\end{array}
\end{equation}
where
\begin{equation}\label{R3}
\begin{array}{l}
R_3(n,h,x_i,\hat{Y}^h_{n},\hat{Y}^h_{n+1})\\
=
\frac{\dx\mu_X(\hat{Y}^h_n)- \sigma_X^2(\hat{Y}^h_n)}{12}\,h\dx^2\int_0^1(1-\eta)^3\partial^4_xu(nh,x_i-\eta\dx,\hat{Y}^h_n)d\eta  \\
-\frac{\dx\mu_X(\hat{Y}^h_n)+ \sigma_X^2(\hat{Y}^h_n)}{12}\,h\dx^2\int_0^1(1-\eta)^3\partial^4_xu(nh,x_i+\eta\dx,\hat{Y}^h_n)d\eta \\
-\frac 1 6 \,h  \dx^2\mu_X(\hat{Y}^h_n)\partial^3_xu(nh,x_{i},\hat{Y}^h_n).
\end{array}
\end{equation}

\noindent
\textbf{Step 3. Rearranging the terms.} By resuming, from \eqref{I1} and \eqref{I2} we have
$$
\begin{array}{l}
I_1-I_2\\
=h\partial_tu(nh,x_i,\hat{Y}^h_{n})+(\hat{Y}^h_{n+1}-\hat{Y}^h_n)\partial_yu(nh,x_i,\hat{Y}^h_{n})+h\mu_X(\hat{Y}^h_n)\partial_xu(nh,x_{i},\hat{Y}^h_n)\\
\quad+\frac 1 2 \big[(\hat{Y}^h_{n+1}-\hat{Y}^h_n)^2\partial^2_yu(nh,x_i,\hat{Y}^h_{n}) +  h\,\sigma_X^2(\hat{Y}^h_n) \partial^2_xu(nh,x_{i},\hat{Y}^h_n)\big]\\ 
\quad +h\int(u(t,x+\gamma_X(\hat{Y}^n_h)\zeta,\hat{Y}^h_n)-u(t,x,\hat{Y}^h_n))\nu(\zeta)d\zeta\\
\quad+ \sum_{i=1}^4R_{i}(n,h,x_i,\hat{Y}^h_{n},\hat{Y}^h_{n+1}) + S(n,h,\hat{Y}^h_n, \hat{Y}^h_{n+1}),
\end{array}
$$
in which we have used the change of variable giving
$$
\int(u(t,x+z,\hat{Y}^h_n)-u(t,x,\hat{Y}^h_n))\nu_{\hat{Y}^h_n}(z)dz
=\!\!\int(u(t,x+\gamma_X(\hat{Y}^n_h)\zeta,\hat{Y}^h_n)-u(t,x,\hat{Y}^h_n))\nu(\zeta)d\zeta
$$
and where
\begin{equation}\label{R4}
\begin{array}{l}
R_4(n,h,x_i,\hat{Y}^h_{n})
=
h	\sum_l \big[u(t,x_i+l\dx,\hat{Y}^h_{n})-u(t,x_{i},\hat{Y}^h_{n})\big]\nu_{\hat{Y}^h_n}(l\dx)\dx \\
-h\int\big[u(t,x_i+z,\hat{Y}^h_{n})-u(t,x_i,\hat{Y}^h_{n})\big]\nu_{\hat{Y}^h_n}(z)dz.
\end{array}
\end{equation} 
By passing to the conditional expectation and by using formulas \eqref{momento1-gen}, \eqref{momento2-gen} and \eqref{momento3-gen} for the local moments of order 1, 2 and 3, we obtain
$$
\begin{array}{l}
\widetilde{\mathcal{R}}_n^h(x_i,\hat{Y}^h_n):=\E[I_1-I_2\mid \hat{Y}^h_n]
=h(\partial_tu(nh,x_i,\hat{Y}^h_{n})+\L u(nh,x_i,\hat{Y}^h_{n}))\\
\quad +\sum_{i=1}^4\E[R_{i}(n,h,x_i,\hat{Y}^h_{n},\hat{Y}^h_{n+1})\mid \hat{Y}^h_n]+\E(S(n,h,x_i,\hat{Y}^h_n, \hat{Y}^h_{n+1})\mid \hat{Y}^h_n)\\
\quad 	= \sum_{i=1}^4\E[R_{i}(n,h,x_i,\hat{Y}^h_{n},\hat{Y}^h_{n+1})\mid \hat{Y}^h_n]+\sum_{i=1}^2 S_i(n,h,x_i,\hat{Y}^h_n).
\end{array}
$$
Here we have used the following facts: $u$ solves \eqref{PDEB}; $\E(S(n,h,x_i,\hat{Y}^h_n, \hat{Y}^h_{n+1})\mid \hat{Y}^h_n)=\sum_{i=1}^2 S_i(n,h,x_i,\hat{Y}^h_n)$, with (recall the definition of $S$ in  \eqref{S} and of the local moments $f_h$, $g_h$ and $j_h$  in \eqref{momento1-gen}, \eqref{momento2-gen} and  \eqref{momento3-gen})
\begin{equation}\label{S1}
\begin{array}{l}
S_1(n,h,x_i,\hat{Y}^h_{n}) \\
=f_h(\hat{Y}^h_n)\partial_yu(nh,x_i,\hat{Y}^h_n)+\frac 12g_h(\hat{Y}^h_n)\partial^2_yu(nh,x_i,\hat{Y}^h_n)+\frac 16j_h(\hat{Y}^h_n)\partial^3_yu(nh,x_i,\hat{Y}^h_n) \\ 
\quad +\partial_y\partial_tu(nh,x_i,\hat{Y}^h_{n})\,h(\mu_Y(\hat{Y}^h_{n})h+f_h(\hat{Y}^h_n)),
\end{array}
\end{equation} 
\begin{equation}\label{S2}
\begin{array}{l}
S_2(n,h,x_i,\hat{Y}^h_{n}) 
=h(h\mu_Y(\hat{Y}^h_n)+f_h(\hat{Y}^h_n))\times\\
\times\sum_l \nu_{\hat{Y}^h_n}(l\dx)\big[\partial_yu(nh,x_i+l\dx,\hat{Y}^h_{n})-\partial_yu(nh,x_{i},\hat{Y}^h_{n})\Big]\dx.
\end{array}
\end{equation} 
\noindent
\textbf{Step 4. Estimate of the remainder.} 
Hereafter, $C$ denotes a positive constant which may vary from a line to another and is independent of $n,h,\dx$.

By \eqref{mmm}, we have to study 
$\mathcal{R}_n^h(\cdot ,\hat{Y}^h_n)=(A^h_{\dx})^{-1}(\hat{Y}^h_n)\widetilde{\mathcal{R}}_n^h(\cdot ,\hat{Y}^h_n)$. By  Lemma  \ref{norma} it follows that  $\sup_{y\in\mathcal{D}}|(A^h_{\dx})^{-1}(y)|_2\leq 1$, so
$$
\begin{array}{l}
\E\big[e^{\sum_{l=1}^{n}2\lambda c_\nu h}|\mathcal{R}_n^h(\cdot,\hat{Y}^h_n)|^2_{2}\big]
\leq e^{2\lambda c_\nu T}\E\big[|\widetilde{\mathcal{R}}_n^h(\cdot,\hat{Y}^h_n)|^2_{2}\big]\\
\quad \leq C	\sum_{i=1}^4\E\big[|R_{i}(n,h,\cdot,\hat{Y}^h_{n},\hat{Y}^h_{n+1})|^2_{2}\big] +\sum_{i=1}^2\E\big[|S_{i}(n,h,\cdot,\hat{Y}^h_{n})|^2_{2}\big] .
\end{array}
$$
Hence it suffices to prove that the above 6 terms are all upper bounded by $Ch^2(h+\dx^2)^2$. The inequalities studied in $(ii)$ of  Lemma \ref{lemma-poisson} now come on.

Consider first $R_1$ in \eqref{R1} and in particular, the first addendum therein. Set
$$
g_n(x)= h^2\int_0^1(1-\tau)\partial^2_tu(nh+\tau h,x,\hat{Y}^h_{n+1})d\tau.
$$
Since $u\in C^{2,6}_{\pol,T}(\R,\mathcal{D})$,  $\partial^k_xg_n\in L^2(\R,dx)$ for every $k=0,1,2$ and $|\partial^k_xg_n|_{L^2}$ $\leq Ch^2(1+|\hat{Y}^h_{n+1}|^a)$. So, by using \eqref{poisson2},
$$
|g_n|_2^2\leq C h^4(1+|\hat{Y}^h_{n+1}|^a)^2.
$$
Similar estimates hold for the other terms in $R_1$, so we can write
$$
\begin{array}{rl}
|R_{1}(n,h,\cdot,\hat{Y}^h_{n},\hat{Y}^h_{n+1})|^2_{2}\leq
C\big[&\!\!\!\!h^4(1+|\hat{Y}_n^h|^{a})^2+|\hat{Y}_{n+1}-\hat{Y}_n|^{8}(1+|\hat{Y}_n^h|^{a}+|\hat{Y}_{n+1}^h|^{a})^2\\
&+h^2|\hat{Y}_{n+1}-\hat{Y}_n|^4(1+|\hat{Y}_n^h|^{a})^2\big].
\end{array}
$$
By using the increment estimates \eqref{stima_momento-gen}, the moment estimates \eqref{stima_incremento-gen} and the Cauchy-Schwartz inequality, we obtain
$$
\E\big[|R_{1}(n,h,\cdot,\hat{Y}^h_{n},\hat{Y}^h_{n+1})|^2_{2}\big]\leq Ch^4.
$$
The same arguments can be developed for $R_3$ in \eqref{R3} and $S_1$ in \eqref{S1}. These give
$$
\E\big[|R_3(n,h,\cdot,\hat{Y}^h_{n},\hat{Y}^h_{n+1})|^2_{2}\big]\leq C h^2\dx^4
\mbox{ and }\E\big[|S_1(n,h,\cdot,\hat{Y}^h_{n})|^2_{2}\big]	\leq Ch^4.
$$
In order to study $R_2$ in \eqref{R2}, consider the first term and set
$$
\begin{array}{l}
g_n(x)
=h^2	\sum_l \nu_{\hat{Y}^h_n}(l\dx)\dx
\times\\
\times	\int_0^1(1-\tau)\big[\partial_tu(nh+\tau h,x+l\Delta x,\hat{Y}^h_{n+1})-\partial_tu(nh+\tau h,x,\hat{Y}^h_{n+1})\big]d\tau.
\end{array}
$$
We notice that $g_n\in C^2$. By the Cauchy-Schwarz inequality for the (discrete) finite measure $\nu_{\hat{Y}^h_n}(l\dx)\dx$, $l\in\Z$, we have
$$
\begin{array}{l}
|\partial_x^kg_n(x)|^2
\leq Ch^4\sum_l \nu_{\hat{Y}^h_n}(l\dx)\dx
\times\\
\times	\int_0^1(1-\tau)^2\Big(\big|\partial_x^k\partial_tu(nh+\tau h,x+l\Delta x,\hat{Y}^h_{n+1})\big|^2+\big|\partial_x^k\partial_tu(nh+\tau h,x,\hat{Y}^h_{n+1})\big|^2\Big)d\tau.
\end{array}
$$
This gives $|\partial^k_x g_n|_{L^2}\leq C h^2(1+|\hat{Y}^h_{n+1}|^a)$ and, by \eqref{poisson2}, $|g_n|_2^2\leq C h^4 (1+|\hat{Y}^h_{n+1}|^a)^2$. By developing the same arguments to the other terms in $R_2$, we obtain 
$$
\begin{array}{l}
|R_{2}(n,h,\cdot,\hat{Y}^h_{n},\hat{Y}^h_{n+1})|^2_{2}\leq
C\big[h^4(1+(\hat{Y}_n^h)^{a})^2+
h^2|\hat{Y}_{n+1}-\hat{Y}_n|^{4}(1+|\hat{Y}_n^h|^{a})\big].
\end{array}
$$
And by passing to the expectation, we get $\E(|R_{2}(n,h,\cdot,\hat{Y}^h_{n},\hat{Y}^h_{n+1})|^2_{2})\leq Ch^4$. A similar approach can be used to handle $R_4$ in \eqref{R4} and in $S_2$ in \eqref{S2}, giving 
$$
\E\big[|R_4(n,h,\cdot ,\hat{Y}^h_{n})|_{2}^2\big]
\leq
Ch^2\dx^4 \mbox{ and }
\E\big[|S_2(n,h,\cdot,\hat{Y}^h_{n})|^2_{2}\big]\leq Ch^4.
$$
The proof is now completed.

\end{proof}%

	\subsubsection{Convergence in $l_\infty$-norm}\label{sect-linf}
	
	We consider here a different finite difference scheme for equation \eqref{PDE-barug-h}:
	%
	%
	we  still  approximate (explicit in time) the integral term $\L^{(y)}_{\mbox{\tiny{int}}}v$ in  \eqref{L_i} with a  trapezoidal rule, but  we use an  upwind first order scheme
	to approximate (implicit in time) the differential part $\L^{(y)}_{\mbox{\tiny{diff}}}v$ in  \eqref{L_d}.  As usually done in convection-diffusion problems, we distinguish the cases in which $\mu_X(y)$ is positive or negative in order to take into account the asymmetry given by the convection term and we use one sided difference in the appropriate direction.  
	Hence, the resulting scheme is 
	\begin{equation}\label{equaz2}
	A^h_{\dx}(y)v^n=B^h_{\dx}(y)v^{n+1},
	\end{equation}
	where $A^h_{\dx}(y)$ is the linear operator given by  
	\begin{equation}\label{A2}
	(A^h_{\dx})_{ij}(y)=\begin{cases}
	-\beta^h_{\dx}(y)-|\alpha^h_{\dx}(y)|\I_{\alpha^h_{\dx}(y)<0},\qquad &\mbox{ if }i=j+1,\\
	1+2\beta^h_{\dx}(y)+|\alpha^h_{\dx}(y)|,\qquad &\mbox{ if }i=j,\\
	-\beta^h_{\dx}(y)-|\alpha^h_{\dx}(y)|\I_{\alpha^h_{\dx}(y)>0},\qquad &\mbox{ if }i=j-1,\\
	0 &\mbox{ if }|i-j|>1
	\end{cases},
	\end{equation}  
	with
	$$
	\alpha^h_{\dx}(y)=\frac{h}{\dx}\mu_X(y), \qquad \beta^h_{\dx}(y)=\frac{h}{2\dx^2}\sigma^2_X(y),
	$$
	and $B^h_{\dx}(y)$ is the linear operator defined in \eqref{B}. Then we have:
	\begin{lemma}
		For every $y\in \D$, the operator $A^h_{\dx}(y):l_\infty(\mathcal{X})\rightarrow l_\infty(\mathcal{X})$ is  invertible and $$
		\sup_{y\in\mathcal{D}}|( A^h_{\dx})^{-1}(y)|_\infty\leq1
		$$
		And if $\frac{\nu'}{\nu},\frac{\nu''}{\nu}\in L^1(\R,d\nu)$ then  $\sup_{y\in\mathcal{D}}|B^h_{\dx}(y)|_\infty$ $\leq 1+2\lambda c_\nu$, $c_\nu$ being defined in \eqref{cnu}.
	\end{lemma}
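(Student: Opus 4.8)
The plan is to follow the pattern of the proof of Lemma \ref{norma}, but since the Fourier/Parseval argument used there is intrinsically an $l_2$ tool, I would replace it by an $M$‑matrix (discrete maximum principle) argument tailored to $l_\infty(\mathcal{X})$. As in the $l_2$ case, the hypothesis $\frac{\nu'}{\nu},\frac{\nu''}{\nu}\in L^1(\R,d\nu)$ is used only through Proposition \ref{prop-nu}, i.e. through the bound \eqref{cnu}.

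\textbf{Step 1: invertibility of $A^h_{\dx}(y)$ with $|(A^h_{\dx})^{-1}(y)|_\infty\le1$.} Fix $y\in\D$ and write $A^h_{\dx}(y)=D(y)-M(y)$, where $D(y)=\bigl(1+2\beta^h_{\dx}(y)+|\alpha^h_{\dx}(y)|\bigr)\Id$ is the diagonal part and $M(y)$ collects the sign‑reversed sub‑ and super‑diagonals of \eqref{A2}. By construction $M(y)$ has nonnegative entries, is constant along each of its two nonzero diagonals (because $\alpha^h_{\dx}(y),\beta^h_{\dx}(y)$ do not depend on the spatial index), and each of its rows sums to $2\beta^h_{\dx}(y)+|\alpha^h_{\dx}(y)|\bigl(\I_{\alpha^h_{\dx}(y)<0}+\I_{\alpha^h_{\dx}(y)>0}\bigr)\le 2\beta^h_{\dx}(y)+|\alpha^h_{\dx}(y)|$, so $M(y):l_\infty(\mathcal{X})\to l_\infty(\mathcal{X})$ is bounded and
\[
|D^{-1}(y)M(y)|_\infty\le \frac{2\beta^h_{\dx}(y)+|\alpha^h_{\dx}(y)|}{1+2\beta^h_{\dx}(y)+|\alpha^h_{\dx}(y)|}<1 .
\]
Hence $\Id-D^{-1}(y)M(y)$ is invertible on $l_\infty(\mathcal{X})$ by the Neumann series, with $\bigl|(\Id-D^{-1}(y)M(y))^{-1}\bigr|_\infty\le\bigl(1-|D^{-1}(y)M(y)|_\infty\bigr)^{-1}\le 1+2\beta^h_{\dx}(y)+|\alpha^h_{\dx}(y)|$. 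Since $A^h_{\dx}(y)=D(y)\bigl(\Id-D^{-1}(y)M(y)\bigr)$, it is invertible and
\[
|(A^h_{\dx})^{-1}(y)|_\infty\le\bigl|(\Id-D^{-1}(y)M(y))^{-1}\bigr|_\infty\,|D^{-1}(y)|_\infty\le\frac{1+2\beta^h_{\dx}(y)+|\alpha^h_{\dx}(y)|}{1+2\beta^h_{\dx}(y)+|\alpha^h_{\dx}(y)|}=1 ,
\]
and this bound does not depend on $y$, $h$ or $\dx$, which gives $\sup_{y\in\mathcal{D}}|(A^h_{\dx})^{-1}(y)|_\infty\le1$.

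\textbf{Step 2: bound on $|B^h_{\dx}(y)|_\infty$.} Here one only uses that the $l_\infty\to l_\infty$ operator norm of a matrix is the supremum over its rows of the sum of the moduli of the entries. Fix $y\in\D$. By \eqref{B} and \eqref{cnu}, for $h$ small enough that $h\dx\sum_{l\in\Z}\nu_y(l\dx)\le\lambda c_\nu h<1$, the diagonal entry $1+h\dx\nu_y(0)-h\dx\sum_{l}\nu_y(l\dx)$ is nonnegative, and therefore the sum of the moduli of the entries of any row equals
\[
\sum_{l\neq0}h\nu_y(l\dx)\dx+\Bigl(1+h\dx\nu_y(0)-h\dx\textstyle\sum_{l}\nu_y(l\dx)\Bigr)=1\le 1+2\lambda c_\nu h\le 1+2\lambda c_\nu ,
\]
uniformly in $y$; this matches the bound already obtained in Lemma \ref{norma} for the $l_2$‑norm and is enough for Assumption $\mathcal{K}(p,c,\mathcal{E})$ with $p=\infty$ and $c(y)=2\lambda c_\nu$. (If one prefers not to discuss the sign of the diagonal entry, the crude inequality $|1+a|\le1+|a|$ together with \eqref{cnu} still yields $|B^h_{\dx}(y)|_\infty\le1+C\lambda c_\nu h$ for a universal constant $C$, which is equally fine.)

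\textbf{Main obstacle.} The only genuinely non‑mechanical point is the invertibility of $A^h_{\dx}(y)$ on the infinite‑dimensional space $l_\infty(\mathcal{X})$: strict diagonal dominance supplies the a priori estimate $|v|_\infty\le|A^h_{\dx}(y)v|_\infty$ only once existence of a solution is granted, and the textbook argument "evaluate at the index where $|v|$ is maximal" is unavailable because on the unbounded grid that maximum need not be attained. Writing $A^h_{\dx}(y)$ as a contraction perturbation of an invertible diagonal operator circumvents this difficulty and yields invertibility together with the norm bound in a single step; everything else is routine and runs exactly parallel to Lemma \ref{norma}.
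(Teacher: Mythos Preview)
Your proof is correct and follows essentially the same approach as the paper: the paper also writes $A^h_{\dx}(y)=(1+\eta(y))\Id-P(y)$ with $\eta(y)=2\beta^h_{\dx}(y)+|\alpha^h_{\dx}(y)|$ and $|P(y)|_\infty\le\eta(y)$, inverts via the Neumann series to obtain $|(A^h_{\dx})^{-1}(y)|_\infty\le1$, and handles $B^h_{\dx}(y)$ by the row-sum bound using \eqref{B} and \eqref{cnu}. Your decomposition $D(y)-M(y)$ is this same splitting, and your ``main obstacle'' paragraph makes explicit the reason the Neumann-series route is preferable to a naive maximum-principle argument on the unbounded grid.
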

	\begin{proof}
		We write $ A^h_{\dx}(y)=(1+\eta(y)) \Id-P(y)$, 
		where $\eta(y)=2\beta^h_{\dx}(y)+|\alpha^h_{\dx}(y)|\geq 0$ and $P_{ij}(y)=0$ if $|i-j|\neq 1$ and $P_{ij}=-(A^h_{\dx})_{ij}$ if $|i-j|=1$. It easily follows that $|P(y)|_\infty\leq \eta(y)$. Moreover, it is easy to see that the operator $A^h_{\dx}(y):l_\infty(\mathcal{X})\rightarrow l_\infty(\mathcal{X})	$ is invertible with inverse
		$$
		(A^h_{\dx})^{-1}(y)=(	(1+\eta(y)) \Id -P(y))^{-1}=\frac 1 {1+\eta(y)} \sum_{k=0}^\infty \frac {P(y)^k} {(1+\eta(y))^k }.
		$$
		This gives $|( A^h_{\dx})^{-1}(y)|_\infty\leq1$. The assertion for $B^h_{\dx}(y)$ follows from \eqref{B} and \eqref{cnu}.
	\end{proof}
	We can now state the convergence result in $l_\infty(\mathcal{X})$.

	\begin{theorem}
		\label{conv-H2}
		Let $u$ be defined in \eqref{european} and $(u^h_n)_{n=0,\ldots,N}$ be given by \eqref{backward-ter0} with the choice
		$$
		\Pi^h_{\dx}(y)=(A^h_{\dx})^{-1}B^h_{\dx}(y),
		$$
		$A^h_{\dx}(y)$ and $B^h_{\dx}(y)$ being given in \eqref{A2} and \eqref{B} respectively. Assume that:
		\begin{itemize}
			\item $\frac{\nu'}\nu,\frac{\nu''}\nu\in L^1(\R,d\nu)$;
			\item the Markov chain $(\hat{Y}^h_n)_{n=0,\dots, N}$ satisfies assumptions $\mathcal{H}_1$ and $\mathcal{H}_2$;
			\item $u\in  C^{\infty,4}_{\pol, T}( \R, \D)$.
		\end{itemize}
		Then, there exist $\bar h,C>0$ such that for every $h<\bar h$ and $\dx<1$  one has
		\begin{equation}
		| u(0,\cdot,Y_0)-u^h_{0}(\cdot,Y_0)|_{\infty} \leq C(h+\dx).
		\end{equation}
	\end{theorem}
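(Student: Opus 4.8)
The plan is to deduce the statement from Theorem~\ref{convergencebates}, following the proof of Theorem~\ref{conv-H}: it suffices to check that the operator $\Pi^h_{\dx}(y)=(A^h_{\dx})^{-1}(y)B^h_{\dx}(y)$ --- now with $A^h_{\dx}(y)$ the \emph{upwind} operator \eqref{A2} and $B^h_{\dx}(y)$ the (unchanged) operator \eqref{B} --- satisfies Assumption $\mathcal{K}(\infty,2\lambda c_\nu,h+\dx)$. The stability bound \eqref{stab}, with $c(y)\equiv 2\lambda c_\nu$, is immediate from the Lemma preceding this theorem, since $|\Pi^h_{\dx}(y)|_\infty\le|(A^h_{\dx})^{-1}(y)|_\infty\,|B^h_{\dx}(y)|_\infty$. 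What remains is the consistency relation \eqref{cons} with a remainder obeying \eqref{ass_errore} for $p=\infty$ and $\mathcal{E}(h,\dx)=h+\dx$.

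For consistency I would argue mutatis mutandis as in Steps 1--3 of the proof of Theorem~\ref{conv-H}. Rewriting \eqref{cons} in the equivalent form \eqref{mmm}, one has $\mathcal{R}^h_n(\cdot,\hat{Y}^h_n)=(A^h_{\dx})^{-1}(\hat{Y}^h_n)\widetilde{\mathcal{R}}^h_n(\cdot,\hat{Y}^h_n)$ with $\widetilde{\mathcal{R}}^h_n(x_i,\hat{Y}^h_n)=\E[I_1-I_2\mid\hat{Y}^h_n]$, where $I_1=B^h_{\dx}(\hat{Y}^h_n)u((n+1)h,\cdot,\hat{Y}^h_{n+1})(x_i)$ and $I_2=A^h_{\dx}(\hat{Y}^h_n)u(nh,\cdot,\hat{Y}^h_n)(x_i)$. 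Since $B^h_{\dx}$ is the same operator as before, the Taylor expansion of $I_1$ in $t$ (to order $1$), in $y$ around $\hat{Y}^h_n$ (to order $3$, matching the local moments of Assumption $\mathcal{H}_1$), and of its integral part in $t$ (to order $0$) and in $y$ (to order $1$) produces exactly the remainders $R_1,R_2,S$ and the quadrature term $R_4$ of \eqref{R1}, \eqref{R2}, \eqref{S}, \eqref{R4}. The only new ingredient is the expansion of $I_2$: the one-sided differences in \eqref{A2}, chosen according to the sign of $\mu_X(\hat{Y}^h_n)$ as usual for convection--diffusion problems, reproduce $-h\mu_X(\hat{Y}^h_n)\partial_xu-\frac12 h\sigma_X^2(\hat{Y}^h_n)\partial_x^2u$ up to a truncation error $R_3$ which, because a one-sided difference for $\partial_x$ is only first order, is now of size $h\dx$ (instead of the $h\dx^2$ of \eqref{R3}). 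Passing to the conditional expectation, invoking \eqref{momento1-gen}--\eqref{momento3-gen} and using that $u$ solves \eqref{PDEB} to annihilate the leading term, $\widetilde{\mathcal{R}}^h_n$ reduces to $\sum_{i=1}^4\E[R_i\mid\hat{Y}^h_n]$ plus the terms $S_1,S_2$ of \eqref{S1}--\eqref{S2}.

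The final estimate I would carry out in the $l_\infty(\mathcal{X})$ norm, using $\sup_{y\in\mathcal{D}}|(A^h_{\dx})^{-1}(y)|_\infty\le1$ to reduce to bounding $|\widetilde{\mathcal{R}}^h_n(\cdot,\hat{Y}^h_n)|_\infty$. Here part $(ii)$ of Lemma~\ref{lemma-poisson} is not needed: the quadrature remainder $R_4$ is controlled node by node by part $(i)$, yielding an $O(\dx^2)$ bound, and the Taylor remainders are estimated pointwise in $x$ by means of the uniform-in-$x$ bounds on the derivatives of $u$ furnished by $u\in C^{\infty,4}_{\pol,T}(\R,\D)$. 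As in Step 4 of the proof of Theorem~\ref{conv-H}, the polynomial-growth factors $(1+|\hat{Y}^h_n|^a)$ and the powers $|\hat{Y}^h_{n+1}-\hat{Y}^h_n|^k$ are absorbed after taking expectation through Assumption $\mathcal{H}_2$, while $e^{\sum_{l\le n}c(\hat{Y}^h_l)h}\le e^{2\lambda c_\nu T}$ because $c$ is constant. One thus gets $R_1,R_2,R_4,S_1,S_2=O(h^2)$ exactly as before, and the new contribution satisfies $\E[|R_3(n,h,\cdot,\hat{Y}^h_n)|_\infty]\le Ch\dx$; hence $\E[|\mathcal{R}^h_n(\cdot,\hat{Y}^h_n)|_\infty]\le Ch(h+\dx)$, which is \eqref{ass_errore} with $\mathcal{E}=h+\dx$, and Theorem~\ref{convergencebates} concludes.

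The main obstacle is the honest bookkeeping of the upwind truncation error: one must check that the one-sided choice dictated by the sign of $\mu_X(y)$ still leaves, after cancellation against the diffusive $\beta^h_{\dx}$-contributions, a remainder of order \emph{exactly} $h\dx$ (and not merely $h$), uniformly in $y\in\mathcal{D}$, and that this bound survives the polynomial growth of the Markov chain, i.e. that the interplay between the $l_\infty$-in-space estimate and the $L^p(\Omega)$-in-$\omega$ control of $\hat{Y}^h$ coming from $\mathcal{H}_2$ goes through as smoothly as in the $l_2$ case.
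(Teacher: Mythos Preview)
Your proposal is correct and follows essentially the same approach as the paper: verify Assumption $\mathcal{K}(\infty,2\lambda c_\nu,h+\dx)$ by rewriting the proof of Theorem~\ref{conv-H} in the $l_\infty$-norm, with the only substantive changes being that the upwind truncation error $R_3$ is now $O(h\dx)$ and that part $(ii)$ of Lemma~\ref{lemma-poisson} is no longer needed, which is precisely why $C^{\infty,4}_{\pol,T}$ suffices in place of $C^{2,6}_{\pol,T}$. The paper's own proof is in fact just a two-sentence sketch pointing to exactly these modifications, so your write-up is more detailed than, but entirely consistent with, the original.
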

	
	\begin{proof}
		The statement follows by applying Theorem \ref{convergencebates} once it is proved that $\mathcal{K}(\infty, 2\lambda c_\nu, h+\dx)$ holds. This is just a rewriting of the proof of   Theorem \ref{conv-H} in terms of the norm in $l_\infty(\mathcal{X})$. We only notice that, for handling the remaining terms, in $l_\infty$-norm we do not need to apply  \eqref{poisson2}, so we do not need more regularity for $u$. That's why the class $C^{\infty,4}_{\pol,T}(\R,\D)$ is enough. 
	\end{proof}	
	
\section{The hybrid procedure for the Heston or Bates model}\label{sect-bates}

As an application in finance, we consider the Heston \cite{heston} and the  Bates \cite{bates} model. In this framework,  $u(t,x,y)$ is in fact related to the value function at time $t$ of a European option with maturity $T$ and  (discounted) payoff $f$.

Recall that under the Heston or Bates model, the asset price process $S$ and the volatility process $Y$  evolve following the stochastic differential system
\begin{equation}\label{bates}
\begin{array}{ll}
&\displaystyle
\frac{dS_t}{S_{t^-}}= (r-\delta)dt+\mu \sqrt{Y_t}\, dZ^1_t+\gamma d\tilde H_t,   \\
&\displaystyle
dY_t= \kappa(\theta-Y_t)dt+\sigma\sqrt{Y_t}\,dZ^2_t,
\end{array}
\end{equation}
where  $S_0>0$, $Y_0> 0$, $Z=(Z^1,Z^2)$ is a correlated Brownian motions with $d\langle Z^1,Z^2\rangle_t$ $=\rho dt$, $|\rho|<1$, $\tilde H$ is a compound Poisson process with
intensity $\lambda$ and i.i.d. jumps $\{\tilde  J_k\}_k$ as in \eqref{H}. Here, $\gamma=1$ (Bates model) or $\gamma= 0$ (Heston model).
$r$ and $\delta$ are  the interest rate and the dividend interest rate respectively.	We assume, as usual, that the Poisson process $K$, the jump amplitudes $\{\tilde J_k\}_k$ and the correlated Brownian motion $(Z^1,Z^2)$ are independent. 

With a simple transformation, we can reduce the model \eqref{bates} to our reference model \eqref{generalsystem}. 
To get rid of the correlated Brownian motion,  we set $\bar\rho=\sqrt{1-\rho^2}$, $Z^2=W$ and $Z^1=\rho Z^2+\bar \rho B,$
$(B,W)$ denoting a standard $2$-dimensional Brownian motion. 
Moreover,	considering the process $X_t=\log S_t-\frac{\rho}{\sigma}Y_t$, the pair  $(X,Y)$ satisfies
\begin{equation}\label{SDE_bates}
\begin{array}{l}
dX_t= \mu_X(Y_t)dt+\bar\rho\,\sqrt{Y_t}\, dB_t+\gamma dH_t,\\
dY_t= \kappa(\theta-Y_t)dt+\sigma\sqrt{Y_t}\,dW_t,
\end{array}
\end{equation}
where 
$
\mu_X(y)=  r-\delta-\frac y2 -\frac \rho{\sigma}\kappa(\theta-y),
$
$H_t$ is the compound Poisson process written through the Poisson process $K$, with intensity $\lambda$, and the i.i.d. jumps $J_k=\log(1+\tilde J_k)$. 
The standard Bates model requires that $J_1$ has a normal law. But it is clear that the  convergence result holds for other laws such that the L\'evy measure $\nu$ satisfies the requests in  Theorem \ref{conv-H} or  Theorem \ref{conv-H2}. For example, these properties hold for the mixture of exponential laws used by Kou \cite{kou}.

We consider  the approximating Markov chain for the CIR process discussed in   Section \ref{sect-CIR} and the two possible finite difference operator discussed in  sections  \ref{sect-l2} and  \ref{sect-linf}. As an application of Theorem \ref{conv-H} and Theorem \ref{conv-H2}, we get the following convergence rate result of the hybrid method.

\begin{theorem}\label{Bates_conv}
	Let $(X,Y)$ be the solution to \eqref{SDE_bates} and let $(\hat{Y}^h_n)_{n=0,\dots, N}$ be the Markov chain introduced in    Section \ref{sect-CIR} for the approximation of the CIR process $Y$. Let $u(t,x,y)=\E(f(X_T^{t,x,y},Y_T^{t,y}))$ be as in \eqref{european} and $(u^h_n)_{n=0,\ldots,N}$ be given by \eqref{backward-ter0} with the choice
	$$
	\Pi^h_{\dx}(y)=(A^h_{\dx})^{-1}B^h_{\dx}(y).
	$$
	
	\begin{itemize}
		\item[$(i)$] 
		$\mathrm{[Convergence\  in\ }l_2(\mathcal{X})]$ Suppose that 
		\begin{itemize}
			\item [$\bullet$]
			$A^h_{\dx}(y)$ and $B^h_{\dx}(y)$ are defined in \eqref{A} and \eqref{B} respectively;
			\item [$\bullet$]
			$\frac{\nu'}{\nu},\frac{\nu''}{\nu}\in L^2(\R,d\nu)$ and $\nu$ has finite moments of any order;
			\item [$\bullet$]
			$\partial^{2j}_xf\in C^{2,6-j}_{\pol}(\R, \R_+)$ for every $j=0,\ldots,6$.
		\end{itemize} 
		Then, there exist $\bar h,C>0$ such that for every $h<\bar h$ and $\dx<1$ one has
		$$
		| u(0,\cdot,Y_0)-u^h_{0}(\cdot,Y_0)|_{2} \leq CT(h+ \dx^2).
		$$
		
		\item[$(ii)$] 
		$\mathrm{[Convergence\  in\ } l_\infty(\mathcal{X})]$ Suppose that 
		\begin{itemize}
			\item [$\bullet$]
			$A^h_{\dx}(y)$ and $B^h_{\dx}(y)$ are defined in \eqref{A2} and \eqref{B} respectively;
			\item [$\bullet$]
			$\frac{\nu'}{\nu},\frac{\nu''}{\nu}\in L^1(\R,d\nu)$  and $\nu$ has finite moments of any order;
			\item [$\bullet$]
			$\partial^{2j}_xf\in C^{\infty,4-j}_{\pol}(\R, \R_+)$ for every $j=0,\ldots,4$.
		\end{itemize} 
		Then, there exist $\bar h,C>0$ such that for every $h<\bar h$ and $\dx<1$ one has
		$$
		| u(0,\cdot,Y_0)-u^h_{0}(\cdot,Y_0)|_{\infty} \leq CT(h+ \dx).
		$$
	\end{itemize} 
	
\end{theorem}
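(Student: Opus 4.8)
The plan is to verify, in each of the two cases, the hypotheses of Theorem \ref{conv-H} (for part $(i)$) and of Theorem \ref{conv-H2} (for part $(ii)$), and then to invoke those theorems directly. The reduction of the original model \eqref{bates} to the reference form \eqref{generalsystem} has already been carried out in the excerpt: the pair $(X,Y)$ of \eqref{SDE_bates} fits \eqref{generalsystem} with $\mu_Y(y)=\kappa(\theta-y)$, $\sigma_Y(y)=\sigma\sqrt y$, $\sigma_X(y)=\bar\rho\sqrt y$, $\gamma_X\equiv\gamma$ and $\mu_X(y)=r-\delta-\frac y2-\frac\rho\sigma\kappa(\theta-y)$. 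First I would check the structural conditions (a)--(b) of Section \ref{sect-finitedifference}: $\mu=(\mu_X,\mu_Y)^\star$ is affine and $\sigma_X(y)=\bar\rho\sqrt y$ has polynomial growth, so (a) holds; for (b), either $\gamma=0$ (Heston, $\gamma_X\equiv 0$) or $\gamma=1$ (Bates, $\gamma_X\equiv 1$), so the required dichotomy on $\inf_y|\gamma_X(y)|$ is trivially met. Moreover, since the CIR component is well posed and the jump part is a compound Poisson process with smooth density, the standing requirements (1)--(2) (existence of a unique weak solution living in $\R\times\R_+$, and $u$ solving the PIDE \eqref{PDEB}) hold by Feynman--Kac.

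Next I would dispose of the L\'evy measure. In the standard Bates model $J_1$ is Gaussian, so $\nu=\lambda p_{J_1}$ is a multiple of a Gaussian density; then $\nu'/\nu$ is affine and $\nu''/\nu$ is quadratic in $x$, and since a Gaussian has finite moments of every order one gets $\nu'/\nu,\nu''/\nu\in L^2(\R,d\nu)$, hence also in $L^1(\R,d\nu)$ because $\nu$ is a finite measure. The same elementary computation covers Kou's mixture of exponentials. This is exactly what Theorem \ref{conv-H} (in $L^2$) and Theorem \ref{conv-H2} (in $L^1$) ask of $\nu$, and together with Proposition \ref{prop-nu} it supplies the constant $c_\nu$ of \eqref{cnu}. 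As for the approximating chain, $(\hat Y^h_n)_n$ is precisely the CIR tree of Section \ref{sect-CIR}, which satisfies Assumptions $\mathcal{H}_1$ and $\mathcal{H}_2$ by Propositions \ref{moments-1} and \ref{moments-2}; nothing further is required here and, importantly, no restriction on $2\kappa\theta/\sigma^2$ is needed.

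The remaining --- and main --- ingredient is the regularity of the value function: one must show $u\in C^{2,6}_{\pol,T}(\R,\R_+)$ in case $(i)$ and $u\in C^{\infty,4}_{\pol,T}(\R,\R_+)$ in case $(ii)$, starting from the hypotheses $\partial^{2j}_x f\in C^{2,6-j}_\pol(\R,\R_+)$, resp. $\partial^{2j}_x f\in C^{\infty,4-j}_\pol(\R,\R_+)$. The structure of these hypotheses mirrors the argument. Since in \eqref{SDE_bates} the variable $x$ enters $X^{t,x,y}_T$ only additively, one has $\partial^{l'}_x u(t,x,y)=\E[\partial^{l'}_x f(X^{t,x,y}_T,Y^{t,y}_T)]$, so bounds on the $x$-derivatives of $u$ --- and their $L^2(\R,dx)$, resp. $l_\infty$, decay --- follow from the corresponding assumptions on the $x$-derivatives of $f$; the pairing $j\leftrightarrow 6-j$ (resp. $j\leftrightarrow 4-j$) encodes that differentiating also in $y$ couples back to $x$ through the $Y$-dependent coefficients of $X$. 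For the $t$- and $y$-derivatives I would condition on the CIR path and the jump configuration, write $u$ via a Feynman--Kac representation in which $Y$ appears through its stochastic flow $y\mapsto Y^{t,y}$, and differentiate that flow, using its $L^p$-differentiability with polynomial-growth derivatives --- that is, the CIR regularity estimates underlying Corollary \ref{corollary-cir} and Theorem 4.1 in \cite{A-MC}, extended to the joint $(X,Y)$ system --- so as to obtain $\sup_{t<T}|\partial^k_t\partial^{l'}_x\partial^l_y u(t,\cdot,y)|_{L^p(\R,dx)}\le C(1+|y|^a)$ for $2k+|l'|+|l|\le 6$ (resp. $\le 4$), with $p=2$ (resp. $p=\infty$). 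The hard part is that $\sigma_Y(y)=\sigma\sqrt y$ degenerates at $y=0$, so the flow derivatives must be controlled up to that boundary, uniformly on $[0,T]$ and without the Feller condition; this is the genuine technical core, to be carried out by adapting the CIR analysis.

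With $u$ in the required class, case $(i)$ is a direct application of Theorem \ref{conv-H} (with $\mathcal{E}(h,\dx)=h+\dx^2$ and $c(y)\equiv 2\lambda c_\nu$) and case $(ii)$ of Theorem \ref{conv-H2} (with $\mathcal{E}(h,\dx)=h+\dx$), which yields the two stated estimates and completes the proof.
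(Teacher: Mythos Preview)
Your high-level plan matches the paper's: reduce to Theorems \ref{conv-H} and \ref{conv-H2}, verify $\mathcal{H}_1$, $\mathcal{H}_2$ for the CIR tree via Propositions \ref{moments-1}--\ref{moments-2}, check the structural conditions (a)--(b) on the coefficients, and then establish the regularity $u\in C^{2,6}_{\pol,T}(\R,\R_+)$ (resp.\ $C^{\infty,4}_{\pol,T}$). The paper's proof is exactly this, with the regularity step delegated to Proposition \ref{prop-reg-new}.

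Where you diverge is in the regularity argument itself. You propose to condition on the CIR path and differentiate the stochastic flow $y\mapsto Y^{t,y}$. The paper explicitly notes that this route is blocked: because the diffusion coefficient $\sigma\sqrt{y}$ is not Lipschitz at $0$, the flow is not classically differentiable, and one cannot simply ``control the flow derivatives up to the boundary''. Instead, the paper differentiates the \emph{PDE} \eqref{PIDE-new} in $y$ to obtain a new Cauchy problem for $\partial_y u$ whose generator $\L_*$ corresponds to a CIR with the shifted parameter $\theta_*=\theta+\sigma^2/(2\kappa)$; crucially, this shifted process \emph{always} satisfies the Feller condition, so the Feynman--Kac representation is unique and yields the stochastic formula \eqref{stoc_repr-new}. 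To justify the differentiation rigorously, the paper regularizes $\sqrt{y}$ by smooth $\varphi_k$, uses genuine flow differentiation on the regularized system, and passes to the limit via the stability results of \cite{BMO}. This parameter-shift plus regularization is the actual technical core, not an $L^p$ bound on $\partial_y Y^{t,y}$. Your sketch correctly flags the difficulty but points toward a method that would not close; the paper's argument is the one you should fill in.
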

\begin{proof}
	We apply   Theorem \ref{conv-H} for $(i)$ and Theorem  \ref{conv-H2} for $(ii)$. Following Theorem \ref{conv_T}, the assumptions $\mathcal{H}_1$ and $\mathcal{H}_2$ hold (see also Proposition \ref{moments-2}). So, we need only to prove that if $\partial_x^{2j}f\in C^{2,6-j}_{\pol}(\R, \R_+)$ as $j=0,1,\ldots,6$, resp. $\partial_x^{2j}f\in C^{\infty,4-j}_{\pol}(\R, \R_+)$ as $j=0,1,\ldots,4$, then $u\in C^{2,6}_{\pol,T}(\R, \R_+)$, resp. $u\in C^{\infty,4}_{\pol,T}(\R, \R_+)$. This is proved in next  Proposition  \ref{prop-reg-new} (set $\rho=0$, $\mathfrak{a}=r-\delta-\frac \rho{\sigma}\kappa\theta$ and $\mathfrak{b}=\frac \rho{\sigma}\kappa-\frac 12$ therein),  the whole Section  \ref{appendix-reg} being devoted to.
\end{proof}

\begin{remark}
	Another  example of interest in finance is the Bates-Hull-White mo\-del \cite{bctz}, which is a Bates model coupled with a stochastic interest rate. The dynamics follows  \eqref{bates} in which $r$ is not constant but given by the Vasicek model 
	$$
	dr_t=\kappa_r(\theta_r-r_t)dt+\sigma_rdZ^3_t,
	$$
	$Z^3$ being a Brownian motion correlated with $Z^1$ (and possibly $Z^2$). Here, there is no global transformation allowing one to reduce to our reference model. Nevertheless, a similar convergence result can be proved by means of the local transformation introduced in \cite{bctz} (Section  4.1), acting on each time interval $[nh,(n+1)h]$.	
\end{remark}

%

\subsection{A regularity result for the Heston PDE/Bates PIDE}\label{appendix-reg}

We deal here with a slightly more general model: we consider the SDE 
\begin{equation}\label{SDE}
\begin{array}{l}
dX_t= \left(\mathfrak{a}+\mathfrak{b}Y_t\right)dt+\sqrt{Y_t}\, dW^1_t+\gamma_XdH_t, \\
dY_t= \kappa(\theta-Y_t)dt+\sigma\sqrt{Y_t}\,dW^2_t,
\end{array}
\end{equation}
where $W^1,W^2$ are correlated Brownian motions  with $d\langle W^1,W^2\rangle_t=\rho dt$ and $H$ is a compound Poisson process with intensity  $\lambda$ and  L\'evy measure $\nu$, which is assumed hereafter to have finite moments of any order. Here, $\mathfrak{a},\mathfrak{b}\in\R$  and $\gamma_X\in\{0,1\}$ denote constant parameters. Note that 
when $\mathfrak{a}=r-\delta$, $\mathfrak{b}=-\frac 12$ and $\gamma_X=0$ (resp. $\gamma_X=1$), then $(X,Y)$ is the standard Heston (resp. Bates) model for the log-price and volatility. When instead $\rho=0$, $\mathfrak{a}=r-\delta-\frac \rho{\sigma}\kappa\theta$ and $\mathfrak{b}=\frac \rho{\sigma}\kappa-\frac 12$, we recover the equation \eqref{SDE_bates} discussed in   Theorem \ref{Bates_conv}.

Let  $\L$ denote the infinitesimal generator associated to \eqref{SDE}, that is,
\begin{equation}	\label{app-L}
\L u=\frac y 2 \left( \partial^2_x u +2\rho\sigma \partial_x\partial_yu+\sigma^2\partial^2_yu  \right)
+  \left(\mathfrak{a}+\mathfrak{b}y \right)\partial_xu
+\kappa(\theta-y)\partial_yu+\mathcal{L}_{\mbox{{\tiny int}}} u,
\end{equation}
where, hereafter, we set
$
\mathcal{L}_{\mbox{{\tiny int}}} u(t,x,y)=\int \big[u(t,x+\gamma_X\zeta,y)-u(t,x,y)\big]\nu(\zeta)d\zeta.
$

So, the present section is devoted to the proof of the following result.
\begin{proposition} \label{prop-reg-new}
	Let $p\in [1,	\infty]$, $q\in\N$ and  suppose that 
	$\partial_x ^{2j}f\in C^{p,q-j}_{\pol}(\R,\R_+)$ for every $j=0,1,\ldots,q$. 
	Set
	$$
	u(t,x,y)=\E\big[f(X^{t,x,y}_T,Y^{t,y}_T)\big].
	$$
	Then $u\in C^{p,q}_{\pol, T}(\R,\R_+)$. 
	Moreover, the following stochastic representation holds: for $m+2n\leq 2q$,
	\begin{equation}\label{stoc_repr-new}
	\begin{array}{l}
	\partial^{m}_x\partial^{n}_yu(t,x,y)=\E\left[e^{-n\kappa (T-t)} \partial^m_x\partial^n_yf(X^{n,t,x,y}_T,Y^{n,t,x,y}_T)\right]\\
	\quad+
	n\,\E\left[\int_t^T\left[\frac 12 \partial^{m+2}_x\partial^{n-1}_yu+\mathfrak{b}\partial^{m+1}_x\partial^{n-1}_yu\right](s,X^{n,t,x,y}_s,Y^{n,t,x,y}_s)ds  \right],
	\end{array}
	\end{equation}
	where $\partial^{m}_x\partial^{n-1}_y u:=0$ when $n=0$ and $(X^{n,t,x,y},Y^{n,t,x,y})$, $n\geq 0$, denotes the solution starting from $(x,y)$ at time $t$ to the SDE \eqref{SDE}  with parameters
	\begin{equation}\label{parameters-new}
	\rho_n=\rho,\quad \mathfrak{a}_n=\mathfrak{a}+n\rho\sigma,\quad \mathfrak{b}_n=\mathfrak{b},\quad \kappa_n=\kappa,\quad \theta_n=\theta+\frac{n\sigma^2}{2\kappa},\quad \sigma_n=\sigma.
	\end{equation}
	In particular, if $q\geq 2$ then $u\in C^{1,2}([0,T]\times \bar{\mathcal{O}})$, $\bar{\mathcal{O}}=\R\times\R_+$, solves the PIDE
	\begin{equation}\label{PIDE-new}
	\left\{
	\begin{array}{ll}
	\partial_t u(t,x,y)+ \L u(t,x,y)= 0, &(t,x,y)\in [0,T)\times\bar{\mathcal{O}},\\
	u(T,x,y)= f(x,y), & (x,y)\in \bar{\mathcal{O}}.
	\end{array}
	\right.	
	\end{equation}
\end{proposition}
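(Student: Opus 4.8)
The two structural facts I would exploit are that the coefficients of \eqref{SDE} do not depend on $x$ and that differentiating them in $y$ only hits the scalar prefactors $\tfrac y2,\ \rho\sigma y,\ \tfrac{\sigma^2 y}2,\ \mathfrak a+\mathfrak b y,\ \kappa(\theta-y)$, so that one recovers a Heston/Bates generator with shifted parameters. First I would record that, since $X^{t,x,y}_s=x+X^{t,0,y}_s$ pathwise with $X^{t,0,y}$ independent of $x$ and $\gamma_X$ constant, one has $\partial^m_x u(t,x,y)=\E[\partial^m_x f(X^{t,x,y}_T,Y^{t,y}_T)]$; hence every statement about $\partial^m_x\partial^n_y u$ with $m+2n\le 2q$ reduces, through the hypothesis $\partial^{2j}_x f\in C^{p,q-j}_{\pol}(\R,\R_+)$, to a statement about $\partial^n_y$ of a value function of the form \eqref{european} whose payoff is a suitable $x$-derivative of $f$. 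This is exactly why two $x$-derivatives must be paid for by one unit of $y$-smoothness of the data, and it disposes of all regularity in $x$.

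\textbf{The $y$-differentiation identity and \eqref{stoc_repr-new}.} Differentiating the Cauchy problem \eqref{PIDE-new} in $y$ and using that $\partial_y$ commutes with the (constant-coefficient) differential and integral operators in $\L$ while acting on its coefficients, I would show that $w=\partial_y u$ solves
\[
\partial_t w+\L^{(1)}w-\kappa w=-\Big(\tfrac12\partial^2_x u+\mathfrak b\,\partial_x u\Big),\qquad w(T,\cdot,\cdot)=\partial_y f,
\]
where $\L^{(1)}$ is the generator of \eqref{SDE} with $\mathfrak a,\theta$ replaced by $\mathfrak a+\rho\sigma,\ \theta+\sigma^2/(2\kappa)$, i.e. the $n=1$ case of \eqref{parameters-new}: the first-order terms $\rho\sigma\partial_x w+\tfrac{\sigma^2}2\partial_y w$ produced by differentiating the coefficients are precisely what upgrade $\L$ to $\L^{(1)}$, the term $-\kappa\partial_y u$ becomes the discount $-\kappa w$, and $\tfrac12\partial^2_x u+\mathfrak b\,\partial_x u$ is what is genuinely new and is pushed into the source. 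Feynman--Kac applied to this linear problem expresses $\partial_y u$ through the $1$-shifted flow, $\partial_y f$ at maturity and $\tfrac12\partial^2_x u+\mathfrak b\,\partial_x u$ along the path; applying $\partial^m_x$ throughout and iterating the same identity to $\partial^m_x\partial^{n-1}_y u$ — whose source involves only $\partial^{m+2}_x\partial^{n-1}_y u$ and $\partial^{m+1}_x\partial^{n-1}_y u$, all of order $n-1$ in $y$ — yields \eqref{stoc_repr-new} for every $m+2n\le 2q$ by a finite downward induction on $n$, the parameter shifts composing additively into \eqref{parameters-new}.

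\textbf{Growth bounds, continuity and the PIDE.} Granting the identity, I would prove $u\in C^{p,q}_{\pol,T}(\R,\R_+)$ by a finite induction on the number of $y$-derivatives (equivalently on $q$). The inputs are property (1); the fact that the CIR flow $Y^{t,y}$ and each of its finitely many shifts $Y^{n,t,y}$ has moments of every order, bounded locally uniformly in $(t,y)$ and in the (bounded) shift; and that $\nu$ has moments of every order, so the compound-Poisson part of $X$ is harmless. At the base level dominated convergence gives $u\in C_{\pol,T}$. In the inductive step, in \eqref{stoc_repr-new} the terminal term is controlled by the polynomial growth of $\partial^m_x\partial^n_y f$ and the integrand involves $\partial^{m+2}_x\partial^{n-1}_y u$ and $\partial^{m+1}_x\partial^{n-1}_y u$, which lie in the required class by the inductive hypothesis (one fewer $y$-derivative, hence no circularity); continuity in $(t,x,y)$ up to $\{y=0\}$ follows from continuity of the flows in the initial data together with the uniform integrability supplied by the moment bounds, and the $L^p(\R,dx)$-in-$x$ control is propagated via the $x$-translation invariance of the dynamics. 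Taking $q\ge2$ gives $u\in C^{1,2}([0,T]\times\bar{\mathcal{O}})$, and a final application of It\^o's formula to $s\mapsto u(s,X^{t,x,y}_s,Y^{t,y}_s)$ on $[t,T]$ — the local martingale being a true martingale by the growth bounds — together with $u(T,\cdot)=f$ and $u(0,\cdot)=\E[f(X_T,Y_T)]$ forces $\partial_t u+\L u=0$, i.e. \eqref{PIDE-new}.

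\textbf{Main obstacle.} The delicate point is the behaviour at $y=0$: the Feller condition is not assumed, the CIR process genuinely reaches the boundary, and one must make sure that repeated $y$-differentiation creates no singularity there and that the Feynman--Kac step and the interchange of $\partial_y$ with $\E[\,\cdot\,]$ are legitimate uniformly down to $y=0$. The affine/Bessel structure is what saves the day: each $y$-derivative only raises $\theta$ (it pushes the underlying Bessel dimension up, never lowering the regularity at the boundary), so all shifted flows retain moments of every order and no blow-up occurs. Carrying this through rigorously, and recognising that it is exactly the balance $m+2n\le 2q$ against $\partial^{2j}_x f\in C^{p,q-j}_{\pol}$ which the recursion respects, is where the real work lies; everything else is bookkeeping dictated by the affine form of \eqref{SDE}.
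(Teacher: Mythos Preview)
Your overall strategy matches the paper's: exploit the $x$-translation invariance $X^{t,x,y}=x+X^{t,0,y}$ to reduce everything to $y$-regularity, differentiate the PIDE in $y$ to obtain a Cauchy problem with the shifted generator $\L^{(1)}$ and parameters \eqref{parameters-new}, invoke Feynman--Kac to recover \eqref{stoc_repr-new}, and induct on the number of $y$-derivatives. You also correctly locate the only genuine difficulty at the boundary $y=0$.

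What is missing is the \emph{mechanism} that resolves that difficulty. Your remark that each $y$-differentiation raises $\theta$ (so the shifted CIR satisfies the Feller condition) is an observation about the target process, not a proof that $\partial_y u$ exists with polynomial growth up to $y=0$. The circularity to be broken is this: to use Feynman--Kac uniqueness on the shifted problem you need $\partial_y u\in C_{\pol,T}(\bar{\mathcal O})$ \emph{a priori}, but the CIR flow $y\mapsto Y^{t,y}$ is not differentiable in $y$ (the coefficient $\sqrt y$ fails to be Lipschitz at $0$), so one cannot simply interchange $\partial_y$ with $\E[\cdot]$ via a first-variation process. The paper closes this gap by a regularization argument: it replaces $\sqrt{|y|}$ by smooth $\varphi_k\ge 1/k$ with $\varphi_k^2$ Lipschitz uniformly in $k$, so the approximating pair $(X^k,Y^k)$ does admit a first-variation process and $\partial_y u^k$ is well defined with polynomial growth \emph{uniform in $k$}; then $\partial_y u^k$ solves a uniformly elliptic shifted PIDE, one passes to the limit via the SDE stability result of \cite{BMO}, and identifies the limit with $\partial_y u$ in the distributional sense. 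Only at this stage does the Feller condition of the shifted process enter, to guarantee uniqueness and hence the identification $\partial_y u=\bar v$. This regularization step is the content of the paper's Lemma immediately preceding the proof and is the technical ingredient absent from your sketch.

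A smaller omission: $C^{p,q}_{\pol,T}$ also requires control of $\partial_t^k\partial_x^{l'}\partial_y^{l}u$ for $2k+l'+l\le q$. The paper obtains the time derivatives at the very end, by writing $\partial_t u_{m,n}=-\L_n u_{m,n}+n\kappa u_{m,n}-n[\tfrac12 u_{m+2,n-1}+\mathfrak b\,u_{m+1,n-1}]$ from the PIDE satisfied by $u_{m,n}=\partial_x^m\partial_y^n u$ and iterating; you should include this step.
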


\begin{remark}
	For our purposes, we need both the polynomial growth condition for $(x,y)\mapsto u(t,x,y)$ and the $L^p$	property for $x\mapsto u(t,x,y)$, and similarly for the derivatives. A closer look to the proof of   Proposition \ref{prop-reg-new} shows that the result holds also when one is not interested in the latter $L^p$ condition. In this case,   Proposition \ref{prop-reg-new} reads:  for $q\in\N$, if $\partial_x ^{2j}f\in C^{q-j}_{\pol}(\R\times\R_+)$ for every $j=0,1,\ldots,q$ then
	$u\in C^q_{\pol,T}(\R\times \R_+)$. Moreover, the stochastic representation \eqref{stoc_repr-new} holds and, if $q\geq 2$, $u$ solves PIDE \eqref{PIDE-new}.
\end{remark}	

As an immediate consequence of  Proposition \ref{prop-reg-new}, we obtain the already known regularity result for the CIR process which has been already proved in  Proposition 4.1 of  \cite{A-MC}.

\begin{corollary}\label{corollary-cir}
	Assume that $f=f(y)$ and set
	$
	u(t,y)=\E\big[f(Y^{t,y}_T)\big].
	$
	If $f\in C^q_\pol(\R_+)$, then $u\in C^q_{\pol, T}(\R_+)$. 
	Moreover, for $n\leq q$,
	$$
	\partial^n_y u(t,y)=\E\left[e^{-n\kappa (T-t)} 	\partial^n_yf(Y^{n,t,y}_T)\right],
	$$
	where $Y^{n,t,y}$ denotes a CIR process starting from $y$ at time $t$ which solves the CIR dynamics with parameters $\kappa_{n}=\kappa$, $\theta_{n}=\theta+\frac {n\sigma^2} {2\kappa}$, $\sigma_n=\sigma$. 
	In particular, if  $q\geq 2$ then $u\in C^2_{\pol,T}(\R_+)$ solves the PDE
	$$
	\left\{
	\begin{array}{ll}	
	\partial_tu+ \mathcal{A} u= 0,& (t,y)\in [0,T)\times \R_+,\\
	u_n(T,y)= \partial^n_yf(y),& y\in \R_+,
	\end{array}
	\right.
	$$	
	where $\mathcal{A}$ is the CIR infinitesimal generator given in \eqref{A-op}.
\end{corollary}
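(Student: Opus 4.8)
The plan is to read the statement directly off Proposition~\ref{prop-reg-new}, viewing $f$ as a function of the pair $(x,y)$ that happens not to depend on $x$. Concretely, I would set $\tilde f(x,y)=f(y)$ on $\R\times\R_+$ and apply Proposition~\ref{prop-reg-new} with $\gamma_X=0$ (no jumps, so $\mathcal{L}_{\mbox{{\tiny int}}}\equiv 0$) and any convenient choice of the parameters $\mathfrak a,\mathfrak b$ in \eqref{SDE}. Then $\tilde u(t,x,y):=\E[\tilde f(X^{t,x,y}_T,Y^{t,y}_T)]=\E[f(Y^{t,y}_T)]=u(t,y)$ does not depend on $x$, and the $X$-dynamics is irrelevant.

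First I would check the hypotheses of Proposition~\ref{prop-reg-new} with $p=\infty$. For $j=0$ we have $\partial^0_x\tilde f=\tilde f$, and since $f\in C^q_\pol(\R_+)$ the derivatives $\partial^l_y\tilde f(\cdot,y)=\partial^l_y f(y)$ are constant in $x$ with $|\partial^l_y f(y)|\le C(1+|y|^a)$, so $\tilde f\in C^{\infty,q}_\pol(\R,\R_+)$; for $j\ge 1$, $\partial^{2j}_x\tilde f\equiv 0\in C^{\infty,q-j}_\pol(\R,\R_+)$ trivially. Proposition~\ref{prop-reg-new} then gives $\tilde u\in C^{\infty,q}_{\pol,T}(\R,\R_+)$, and since $\tilde u$ is independent of $x$ this is precisely $u\in C^q_{\pol,T}(\R_+)$. (One may equivalently invoke the $L^p$-free version of Proposition~\ref{prop-reg-new} noted in the Remark preceding the Corollary, which already outputs $C^q_{\pol,T}$.)

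Next I would specialize the stochastic representation \eqref{stoc_repr-new} to $m=0$: every term containing an $x$-derivative of $\tilde u$ vanishes, namely $\partial^2_x\partial^{n-1}_y\tilde u=\partial_x\partial^{n-1}_y\tilde u=0$, so the integral correction disappears and $\partial^n_y u(t,y)=\E[e^{-n\kappa(T-t)}\partial^n_y f(Y^{n,t,y}_T)]$, where, keeping only the $Y$-parameters from \eqref{parameters-new}, $Y^{n,t,y}$ is the CIR process started at $y$ at time $t$ with $\kappa_n=\kappa$, $\theta_n=\theta+\tfrac{n\sigma^2}{2\kappa}$, $\sigma_n=\sigma$. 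Finally, for $q\ge 2$, Proposition~\ref{prop-reg-new} yields $\tilde u\in C^{1,2}([0,T]\times\bar{\mathcal{O}})$ solving \eqref{PIDE-new}; with $\gamma_X=0$ the nonlocal term in \eqref{app-L} is absent and, $\tilde u$ being independent of $x$, one has $\L\tilde u=\tfrac12\sigma^2 y\,\partial^2_y\tilde u+\kappa(\theta-y)\partial_y\tilde u=\mathcal A u$, with $\mathcal A$ the CIR generator \eqref{A-op} (for which $a_Y(y)=\sigma^2 y$, $\mu_Y(y)=\kappa(\theta-y)$). Together with the terminal condition $\tilde u(T,\cdot)=\tilde f=f$ this gives the stated Cauchy problem for $u$, and the analogous problems for the $\partial^n_y u$ follow from the representation above via the parameter shift $\theta\mapsto\theta+\tfrac{n\sigma^2}{2\kappa}$ and the factor $e^{-n\kappa(T-t)}$.

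There is essentially no genuine obstacle here: the Corollary is a transcription of Proposition~\ref{prop-reg-new} in the degenerate case $\gamma_X=0$, $\partial_x f\equiv 0$. The only point requiring a moment of care is that a nonconstant function of $y$ alone does not lie in $L^p(\R,dx)$ for finite $p$, which is why one works with $p=\infty$ (or with the $L^p$-free variant of the Proposition); once that is observed, all the remaining steps are routine.
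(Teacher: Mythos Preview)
Your proposal is correct and follows exactly the approach the paper intends: the Corollary is stated there as an immediate consequence of Proposition~\ref{prop-reg-new}, and you have simply written out the specialization $\tilde f(x,y)=f(y)$, $\gamma_X=0$, $p=\infty$ (or the $L^p$-free variant from the Remark) in full detail. Your observation that one must take $p=\infty$ because a nontrivial function of $y$ alone is not in $L^p(\R,dx)$ for finite $p$ is the only point requiring care, and you handled it correctly.
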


We first need some preliminary results. First of all, recall that $X$ and $Y$ have uniformly bounded moments:
for every $T>0$ and $a\geq 1$ there exist $A>0$  such that for every $t\in[0,T]$,
\begin{equation}\label{moments_XY}
\sup_{s\in[t,T]} \E[|X^{t,x,y}_s|^a]\leq A(1+|x|^{a}+y^{a})\mbox{ and }
\sup_{s\in[t,T]} \E[|Y^{t,y}_s|^a]\leq A(1+y^a).
\end{equation}
For the second property in  \eqref{moments_XY}, we refer, for example, to \cite{A-MC}, whereas the first one follows from standard techniques.
\begin{lemma}\label{lemma_reg}
	Let $p\in[0,\infty]$, $ g\in C^{p,0}_{\pol}(\R,\R_+)$,
	$ h\in C^{p,0}_{\pol,T}(\R,\R_+)$  and consider the function
	\begin{equation}\label {u-g-f}
	u(t,x,y)=\E\Big[e^{\varrho (T-t)} g(X^{t,x,y}_T,Y^{t,y}_T)-\int_t^Te^{\varrho (s-t)} h(s,X^{t,x,y}_s,Y^{t,y}_s)ds  \Big],
	\end{equation}
	where $\varrho\in\R$. Then $u\in C^{p,0}_{\pol,T}(\R,\R_+)$. 
\end{lemma}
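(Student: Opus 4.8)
The plan is to show three things in turn: (1) $u$ is well-defined and has polynomial growth in $(x,y)$ uniformly in $t<T$; (2) for fixed $t$, the map $x\mapsto u(t,x,y)$ lies in $L^p(\R,dx)$ with an $L^p$-norm that grows polynomially in $y$; and (3) $u$ is continuous on $[0,T)\times\R\times\R_+$. The continuity part together with the two growth estimates gives exactly $u\in C^{p,0}_{\pol,T}(\R,\R_+)$.

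First I would address the polynomial growth in $(x,y)$. Since $g\in C^{p,0}_{\pol}(\R,\R_+)\subset C_\pol(\R\times\R_+)$, there are $C,a>0$ with $|g(x,y)|\le C(1+|x|^a+y^a)$, and similarly $|h(s,x,y)|\le C(1+|x|^a+y^a)$ uniformly in $s<T$. Plugging $(X^{t,x,y}_s,Y^{t,y}_s)$ into these bounds, taking expectations, and invoking the uniform moment estimates \eqref{moments_XY} gives $|u(t,x,y)|\le C'e^{|\varrho|T}(1+|x|^a+y^a)\big(1+(T-t)\big)$, hence the desired bound $\sup_{t<T}|u(t,x,y)|\le C''(1+|x|^a+y^a)$. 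This also shows the expectation and the time integral defining $u$ are finite.

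For the $L^p$ estimate in $x$, the key observation is that the law of $X^{t,x,y}_s$ depends on the starting point $x$ only through a \emph{translation}: writing the $X$-dynamics of \eqref{SDE} we have $X^{t,x,y}_s = x + \widetilde X^{t,0,y}_s$ where $\widetilde X^{t,0,y}_s$ does not depend on $x$ (the drift, diffusion and jump coefficients of $X$ involve only $Y$, not $X$ itself). Therefore $u(t,x,y)=\E\big[e^{\varrho(T-t)}g(x+\widetilde X^{t,0,y}_T,Y^{t,y}_T)-\int_t^T e^{\varrho(s-t)}h(s,x+\widetilde X^{t,0,y}_s,Y^{t,y}_s)ds\big]$, and by Minkowski's integral inequality
\begin{align*}
\big|u(t,\cdot,y)\big|_{L^p(\R,dx)}
&\le e^{|\varrho|T}\,\E\Big[\big|g(\cdot+\widetilde X^{t,0,y}_T,Y^{t,y}_T)\big|_{L^p(\R,dx)}\Big]\\
&\qquad +e^{|\varrho|T}\int_t^T\E\Big[\big|h(s,\cdot+\widetilde X^{t,0,y}_s,Y^{t,y}_s)\big|_{L^p(\R,dx)}\Big]ds.
\end{align*}
By translation invariance of Lebesgue measure, $\big|g(\cdot+z,w)\big|_{L^p(\R,dx)}=\big|g(\cdot,w)\big|_{L^p(\R,dx)}\le C(1+w^a)$ using $g\in C^{p,0}_{\pol}$, and likewise for $h$; taking expectations and using the moment bound for $Y$ in \eqref{moments_XY} yields $\sup_{t<T}\big|u(t,\cdot,y)\big|_{L^p(\R,dx)}\le C(1+y^a)$, which is the $L^p$-polynomial-growth requirement.

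Finally, for continuity on $[0,T)\times\R\times\R_+$ I would use standard $L^q(\Omega)$-continuity of the flow $(t,x,y)\mapsto(X^{t,x,y}_s,Y^{t,y}_s)$ (continuity of solutions of the SDE \eqref{SDE} in initial data and initial time, available since $Y$ has uniformly bounded moments of all orders and the CIR/Heston flow is well-posed), combined with the continuity of $g$ and $h$ and a uniform-integrability argument powered by the polynomial moment bounds \eqref{moments_XY}: along a convergent sequence $(t_k,x_k,y_k)\to(t,x,y)$ with $t<T$, pass to a subsequence along which $(X^{t_k,x_k,y_k}_T,Y^{t_k,y_k}_T)\to(X^{t,x,y}_T,Y^{t,y}_T)$ a.s., apply continuity of $g$, and conclude by Vitali's convergence theorem using that $\{g(X^{t_k,x_k,y_k}_T,Y^{t_k,y_k}_T)\}_k$ is bounded in $L^2(\Omega)$ (from \eqref{moments_XY}), hence uniformly integrable; handle the time-integral term analogously, using dominated convergence in $s$ with the uniform bound $|h(s,X^{t_k,x_k,y_k}_s,Y^{t_k,y_k}_s)|\le C(1+|X^{t_k,x_k,y_k}_s|^a+(Y^{t_k,y_k}_s)^a)$ whose expectation is uniformly bounded. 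This gives $u(t_k,x_k,y_k)\to u(t,x,y)$, i.e. $u\in C(\,[0,T)\times\R\times\R_+)$.

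The main obstacle is the $L^p$ claim, and the trick that makes it work is precisely the translation structure $X^{t,x,y}_s=x+\widetilde X^{t,0,y}_s$, which is available here only because the coefficients of $X$ in \eqref{SDE} depend on $Y$ alone; without that one would have to control how the $L^p(dx)$-norm transforms under a genuinely $x$-dependent change of variables. The continuity step is routine but slightly delicate because the state space $\R\times\R_+$ is not compact and the integrand is only polynomially bounded, so one must lean on the uniform moment estimates \eqref{moments_XY} for the uniform integrability rather than on any boundedness of $g$ or $h$.
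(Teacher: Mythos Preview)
Your proposal is correct and follows essentially the same route as the paper's proof: both split into polynomial growth, the $L^p$-in-$x$ estimate, and continuity; both exploit the translation structure $X^{t,x,y}_s=x+Z^{t,y}_s$ together with translation invariance of Lebesgue measure for the $L^p$ bound, and both obtain continuity via convergence in probability (or $L^q$) of the flow plus uniform integrability from the moment bounds \eqref{moments_XY}. Your identification of the norm inequality as Minkowski's integral inequality is in fact more accurate than the paper's phrasing (``Cauchy--Schwarz''), but the argument is the same.
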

\begin{proof}
	We set
	$$
	u_1(t,x,y)=\E\Big[e^{\varrho (T-t)} g(X^{t,x,y}_T,Y^{t,y}_T)\Big], 
	u_2(t,x,y)=\E\Big[\int_t^Te^{\varrho (s-t)} h(s,X^{t,x,y}_s,Y^{t,y}_s)ds  \Big]
	$$
	%
	and we show that, for $i=1,2$, $u_i\in C^{p,0}_{\pol,T}(\R,\R_+)$. We prove it for $i=2$, the case $i=1$ being similar and easier.

	Fix $(t,x,y)\in [0,T]\times \R\times \R_+$ and let $(t_n,x_n,y_n)_n\subset [0,T]\times\R\times\R_+$ be such that $(t_n,x_n,y_n)\to (t,x,y)$ as $n\to\infty$. One can easily prove that, for every fixed $s\geq t_n\vee t$,  $(X_s^{t_n,x_n,y_n},Y_s^{t_n,y_n})\rightarrow(X_s^{t,x,y},Y_s^{t,y})$ in probability. 
	%
	We write $u_2$ as 
	$$
	u_2(t,x,y)=\int_0^T\I_{s>t}e^{\varrho (s-t)}\E\left[ h(s,X^{t,x,y}_s,Y^{t,y}_s) \right]ds.
	$$
	Since $ h$ is continuous,	for $s>t_n\vee t$ the sequence $( h(s,X^{t_n,x_n,y_n}_s,Y^{t_n,y_n}_s))_n$ converges in probability to $ h(s,X^{t,x,y}_s,Y^{t,y}_s)$.  By the polynomial growth of $h$ and \eqref{moments_XY}, for $p>1$ we have
	\begin{align}\label{a}
	\sup_{n} \E[	| h(X_s^{t_n,x_n,y_n},Y_s^{t_n,y_n})|^p]&\leq C\sup_n\E[1+|X_s^{t_n,y_n}|^{ap}+(Y_s^{t_n,y_n})^{ap}]<\infty.
	\end{align}
	Thus, $( h(X_s^{t_n,x_n,y_n},Y_s^{t_n,y_n}))_n$ is uniformly integrable, so $ \{h(X_s^{t_n,x_n,y_n},Y_s^{t_n,y_n})\}_n$ converges to $h(X_s^{t,x,y},Y_s^{t,y})$ in $L^1(\Omega)$ and
	$$
	\I_{s>t_n}\E\left[e^{\varrho (s-t_n)} h(s,X^{t_n,x_n,y_n}_s,Y^{t_n,y_n}_s) \right]\to \I_{s>t}\E\left[e^{\varrho (s-t)} h(s,X^{t,x,y}_s,Y^{t,y}_s) \right],
	$$
	a.e. $s\in [0,T] $.
	By \eqref{a}, $u_2(t_n,x_n,y_n)\to u_2(t,x,y)$ thanks to the Lebesgue's dominated convergence and moreover, $u_2$ grows polynomially. So, $u_2\in \mathcal{C}_{\pol,T}(\R\times\R_+)$.
	Fix now $p\neq\infty$. We notice that $X^{t,x,y}_s=x+Z^{t,y}_s$, so
	$$
	\begin{array}{l}
	\sup_{t\leq T}\|u_2(t,\cdot,y)\|_{L^p(\R,dx)}
	= \sup_{t\leq T}\| \E [\int_t^Te^{\varrho (s-t)} h(s,X^{t,\cdot,y}_s,Y^{t,y}_s)ds   ] \|_{L^p(\R,dx)}\\
	\quad\leq C\sup_{t\leq T}\E [ \int_t^T \| h(s,X^{t,\cdot,y}_s,Y^{t,y}_s)\|^p_{L^p(\R,dx)} ]^{1/p}\\
	\quad = C\sup_{t\leq T}\E[ \int_t^T \| h(s,\cdot +Z^{t,y}_s,Y^{t,y}_s)\|^p_{L^p(\R,dx)} ]^{1/p}\\
	\quad = C\sup_{t\leq T}\E [ \int_t^T \| h(s,\cdot ,Y^{t,y}_s)\|^p_{L^p(\R,dx)}]^{1/p}
	\leq C T\sup_{t\leq s\leq T}(1+\E[(Y_s^{t,y})^{pa}])^{1/p}
	\end{array}
	$$
	in which we have used twice the Cauchy-Schwarz inequality. Then, by \eqref{moments_XY}, $u_2\in C_{\pol,T}^{p,0}(\R,\R_+)$. The case $p=\infty$ follows the same lines.
\end{proof}
To simplify the notation, from now on we set $\E^{t,x,y}[\cdot]=\E[\cdot|X_t=x,Y_t=y]$ and $\mathcal{O}=\R\times(0,\infty)$.
\begin{lemma}\label{u-pde}
	Let $ g\in \mathcal{C}_{\pol}(\bar{\mathcal{O}})$ and $ h\in C_{\pol,T}(\bar{\mathcal{O}})$ be such that $\mathcal{O}\ni z\mapsto  h(t,z)$ is locally H\"older continuous uniformly on the compact sets of $[0,T)$. 
	Let $u$ be defined in \eqref {u-g-f}. Then, $u\in\mathcal{C}([0,T]\times\bar{\mathcal{O}})\cap\mathcal{C}^{1,2}([0,T)\times \mathcal{O})$ and solves the PIDE
	\begin{equation}\label{PDE-u}
	\left\{
	\begin{array}{ll}
	\partial_t u+ \L u+\varrho u= h,&\mbox{ in }[0,T)\times\mathcal{O},\\
	u(T,z)= g(z),&\mbox{ in }\mathcal{O}.
	\end{array}\right.
	\end{equation}
	Moreover, if the Feller condition $2\kappa\theta\geq \sigma^2$ holds then $u$ is the unique solution to \eqref{PDE-u} in the class $C_{\pol,T}(\bar{\mathcal{O}})$.
	
\end{lemma}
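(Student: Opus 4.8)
The plan is to treat the statement in three parts: (i) continuity of $u$ up to $t=T$ and up to $\{y=0\}$, together with polynomial growth; (ii) interior $C^{1,2}$ regularity and the PIDE on $[0,T)\times\mathcal{O}$; (iii) uniqueness under the Feller condition. Part (i) is obtained exactly as in the proof of Lemma~\ref{lemma_reg}: given $(t_n,x_n,y_n)\to(t,x,y)$ in $[0,T]\times\bar{\mathcal{O}}$ one has $(X^{t_n,x_n,y_n}_s,Y^{t_n,y_n}_s)\to(X^{t,x,y}_s,Y^{t,y}_s)$ in probability for each fixed $s$, the moment bounds \eqref{moments_XY} give uniform integrability of $g$ and of $h(s,\cdot)$ along the sequence, and dominated convergence yields $u(t_n,x_n,y_n)\to u(t,x,y)$; the same bounds give $\sup_{t\le T}|u(t,x,y)|\le C(1+|x|^a+y^a)$. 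This step uses neither the H\"older hypothesis on $h$ nor the non-degeneracy of $\L$, and it covers $\{y=0\}$; in particular the terminal condition $u(T,\cdot)=g$ follows directly from \eqref{u-g-f}.

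For the interior regularity I would remove the nonlocal term by a Picard iteration, setting $\L_0:=\L-\mathcal{L}_{\mbox{{\tiny int}}}$. Since $\gamma_X$ is constant and $\nu$ has finite moments of every order, $\mathcal{L}_{\mbox{{\tiny int}}}$ preserves $C_{\pol,T}(\bar{\mathcal{O}})$ and, on bounded functions, $|\mathcal{L}_{\mbox{{\tiny int}}}\varphi|\le 2\lambda\|\varphi\|_\infty$, so it is a bounded perturbation of $\L_0$. Let $v_0$ solve the local Cauchy problem $\partial_t v_0+\L_0 v_0+\varrho v_0=h$, $v_0(T,\cdot)=g$: this is classically solvable with $v_0\in C^{1,2}([0,T)\times\mathcal{O})\cap C_{\pol,T}(\bar{\mathcal{O}})$, by classical parabolic theory (interior Schauder estimates, the differential part being uniformly elliptic on relatively compact subsets of $\mathcal{O}$ with smooth, affine coefficients) combined — for the global growth and the continuity at $\{y=0\}$ — with the probabilistic representation and \eqref{moments_XY} exactly as in part (i). Define inductively $v_{k+1}$ as the solution of $\partial_t v_{k+1}+\L_0 v_{k+1}+\varrho v_{k+1}=h-\mathcal{L}_{\mbox{{\tiny int}}}v_k$, $v_{k+1}(T,\cdot)=g$; a standard Duhamel/Gronwall iteration (using $|\mathcal{L}_{\mbox{{\tiny int}}}|\le 2\lambda$ and the time integral) shows that $(v_k)$ is Cauchy, uniformly on compact subsets of $[0,T]\times\bar{\mathcal{O}}$ and with uniform-in-$k$ polynomial growth, hence converges to a limit $v\in C_{\pol,T}(\bar{\mathcal{O}})$ solving \eqref{PDE-u}. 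At each step $\mathcal{L}_{\mbox{{\tiny int}}}v_k$ is locally H\"older in the interior (the polynomially growing first derivatives of $v_k\in C^{1,2}_{loc}$ make $x\mapsto\mathcal{L}_{\mbox{{\tiny int}}}v_k(t,x,y)$ locally Lipschitz, uniformly on compacts of $(t,y)$), so interior Schauder closes the bootstrap and gives $v\in C^{1,2}([0,T)\times\mathcal{O})$. Finally I would identify $v$ with $u$: applying It\^o to $s\mapsto e^{\varrho(s-t)}v(s,X_s,Y_s)$ on $[t,T\wedge\tau_m]$, where $(\tau_m)$ are exit times from an exhausting sequence of relatively compact open subsets of $\mathcal{O}$, the finite-variation part vanishes because $v$ solves \eqref{PDE-u}; taking expectations, letting $m\to\infty$ using \eqref{moments_XY}, the polynomial growth and the continuity of $v$ at $t=T$, gives $v=u$, whence $u$ itself satisfies \eqref{PDE-u} and lies in $C^{1,2}([0,T)\times\mathcal{O})$.

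For (iii), assume $2\kappa\theta\ge\sigma^2$, so that for $y>0$ one has $Y^{t,y}_s>0$ for all $s\in[t,T]$ a.s., hence $(X_s,Y_s)\in\mathcal{O}$ on $[t,T)$. If $w\in C_{\pol,T}(\bar{\mathcal{O}})$ is another classical solution of \eqref{PDE-u}, apply It\^o to $e^{\varrho(s-t)}w(s,X_s,Y_s)$ on $[t,(T-\tfrac{1}{m})\wedge\tau_m]$; the $dr$-integral vanishes by the PDE, the stopped stochastic integral has zero expectation, and letting $m\to\infty$ — bounding the remainders by the polynomial growth of $w$ and $h$ together with \eqref{moments_XY}, and using the continuity of $w$ at $t=T$ — gives $w(t,x,y)=\E^{t,x,y}\big[e^{\varrho(T-t)}g(X_T,Y_T)-\int_t^T e^{\varrho(s-t)}h(s,X_s,Y_s)\,ds\big]=u(t,x,y)$ for $y>0$, and then for $y=0$ by continuity. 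The main obstacle is the interior-regularity step, and within it the tension between the degeneracy of $\L$ at $\{y=0\}$ — which confines classical Schauder estimates to subsets of $\mathcal{O}$ bounded away from the boundary — and the nonlocal term, whose H\"older regularity one needs but does not have a priori: the Picard scheme is precisely what produces, step by step, a locally H\"older source, while the global growth and the continuity at $\{y=0\}$ stay with the probabilistic representation, the two being glued by the localized It\^o argument. The remaining verifications are routine.
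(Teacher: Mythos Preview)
The paper does not actually prove this lemma: it says only that ``the proof employs standard techniques, see e.g.\ Proposition~3.2 in \cite{ET} with the use of classical results in parabolic PIDEs theory from \cite{GM,MP}'', and omits all details. So your proposal is less a competing route than an explicit realization of what the paper merely cites. That said, the two differ in how the nonlocal term is handled: the references \cite{GM,MP} build Schauder-type theory directly for integro-differential operators, which, combined with the localization-plus-Feynman--Kac argument of \cite{ET} for the degeneracy at $\{y=0\}$, gives the result in one pass. Your Picard iteration instead peels off $\mathcal{L}_{\mbox{{\tiny int}}}$ as a bounded perturbation and reduces everything to a sequence of purely local parabolic problems, so only classical Schauder theory is needed. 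Both are standard; yours is more self-contained, the paper's (implicit) route is shorter once the cited machinery is granted.

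Two technical points in your sketch deserve one line each. First, you invoke ``polynomially growing first derivatives of $v_k$'' to obtain local H\"older regularity of $\mathcal{L}_{\mbox{{\tiny int}}}v_k$, but the hypotheses supply no derivatives of $g$ or $h$. The reason it still works is that the coefficients of $\L_0$ are independent of $x$, so interior Schauder estimates on cylinders $[a,b]\times[x-1,x+1]\times[c,d]\subset[0,T)\times\mathcal{O}$ are uniform in $x$ up to the local sup of $v_k$ and the source; since both grow polynomially in $x$, so do $\partial_x v_k$ and $\partial_x^2 v_k$, and together with the finite moments of $\nu$ this makes $\mathcal{L}_{\mbox{{\tiny int}}}v_k$ locally Lipschitz with polynomially growing constant. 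Second, your contraction step quotes $|\mathcal{L}_{\mbox{{\tiny int}}}\varphi|\le 2\lambda\|\varphi\|_\infty$, but the iterates are only polynomially bounded; the clean version is the factorial estimate $|v_{k+1}-v_k|(t,x,y)\le C\,\dfrac{(C'(T-t))^k}{k!}\,(1+|x|^a+y^a)$, obtained from the Feynman--Kac representation of the differences together with \eqref{moments_XY}. With these two remarks your argument goes through; parts (i) and (iii) are exactly as one would expect and match what the paper's one-line hint (``the CIR process never hits $0$'') indicates for uniqueness.
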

The proof employs standard techniques, see e.g. Proposition 3.2 in \cite{ET} with the use of  classical results in parabolic PIDEs theory from \cite{GM, MP}. The uniqueness of the solution under the Feller condition follows from the fact that the CIR process never hits $0$. So, we omit this proof.

\begin{lemma}
	\label{lemma-reg}
	
	Let $u$ be defined in \eqref{u-g-f}, with $g$ and $h$ such that, as $j=0,1$,
	$\partial_x^{2j}g\in C^{1-j}_{\pol}(\bar{\mathcal{O}})$  and  $\partial_x^{2j}h\in \mathcal{C}^{1-j}_{\pol,T}(\bar{\mathcal{O}})$.
	Then 
	$u\in \mathcal{C}^1_{\pol,T}(\bar{\mathcal{O}})$. Moreover, $\partial^2_xu\in \mathcal{C}_{\pol,T}(\bar{\mathcal{O}})$ and one has
	\begin{equation}
	\partial^m_x u(t,x,y)
	=\E^{t,x,y}\Big[e^{\varrho (T-t)} \partial^m_x g(X_T,Y_T)-\int_t^Te^{\varrho (s-t)} \partial^m_x h(s,X_s,Y_s)ds  \Big],\  m=1,2,\label{u-x-xx}
	\end{equation}
	\begin{equation}\label{u-y}
	\begin{array}{rl}
	\partial_y u(t,x,y)
	=\E^{t,x,y}\Big[&e^{(\varrho-\kappa) (T-t)} \partial_yg(X^*_T,Y^*_T)\\
	&+\int_t^Te^{(\varrho-\kappa) (T-s)} [\partial_yh+\frac 12 \partial^2_xu+\mathfrak{b}\partial_xu](s,X^*_s,Y^*_s)ds  \Big],
	\end{array}
	\end{equation}
	where
	$(X^*_t,Y^*_t)$ solves  \eqref{SDE} with new parameters 	$\rho_*=\rho$, $\mathfrak{a}_*=\mathfrak{a}+\rho\sigma$, $\mathfrak{b}_*=\mathfrak{b}$,  $\kappa_*=\kappa$, $\theta_*=\theta+\frac{\sigma^2}{2\kappa}$, $\sigma_*=\sigma$.
\end{lemma}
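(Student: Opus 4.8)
The plan is to handle the $x$-derivatives and the $y$-derivative separately. The $x$-derivatives are essentially free, thanks to the translation invariance of the $X$-dynamics; the $y$-derivative --- differentiation with respect to the initial value of the degenerate square-root component $Y$ --- is the real difficulty, and is where the shifted parameters \eqref{parameters-new} and the discount $e^{-\kappa(T-t)}$ come from.

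\emph{Step 1 (the $x$-derivatives).} No coefficient in \eqref{SDE} depends on $x$, so $X^{t,x,y}_s=x+Z^{t,y}_s$ with $Z^{t,y}:=X^{t,0,y}$ independent of $x$; hence
$$
u(t,x,y)=\E\Big[e^{\varrho(T-t)}g\big(x+Z^{t,y}_T,Y^{t,y}_T\big)-\int_t^Te^{\varrho(s-t)}h\big(s,x+Z^{t,y}_s,Y^{t,y}_s\big)\,ds\Big].
$$
Since $g\in C^1_\pol(\bar{\mathcal{O}})$ with $\partial^2_xg\in C_\pol(\bar{\mathcal{O}})$ (and similarly for $h$), one differentiates twice in $x$ under the expectation and the $ds$-integral; the interchange is justified by the mean value theorem and the uniform integrability of the difference quotients, a consequence of the polynomial growth of $\partial_xg,\partial^2_xg$ (resp. $\partial_xh,\partial^2_xh$) and of \eqref{moments_XY}. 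This gives \eqref{u-x-xx} for $m=1,2$. As $\partial^m_xu$ then has the form \eqref{u-g-f} with $g,h$ replaced by $\partial^m_xg,\partial^m_xh$, Lemma~\ref{lemma_reg} (in the version for the classes $C^q_\pol$ without the $L^p$-in-$x$ requirement, cf.\ the remark after Proposition~\ref{prop-reg-new}) gives $\partial_xu,\partial^2_xu\in\mathcal{C}_{\pol,T}(\bar{\mathcal{O}})$; that $u$ itself lies in $\mathcal{C}_{\pol,T}(\bar{\mathcal{O}})$ is again Lemma~\ref{lemma_reg}.

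\emph{Step 2 (reduction to smooth data and the PIDE for $\partial_yu$).} Approximate $g,h$ by $g_n,h_n\in C^\infty$ with rapid decay such that $\partial^{2j}_xg_n\to\partial^{2j}_xg$, $\partial^{2j}_xh_n\to\partial^{2j}_xh$ for $j=0,1$ and $\partial_yg_n\to\partial_yg$, $\partial_yh_n\to\partial_yh$, all locally uniformly and with polynomial bounds uniform in $n$. For such data the function $u_n$ obtained from \eqref{u-g-f} with $g_n,h_n$ in place of $g,h$ is smooth on $[0,T)\times\mathcal{O}$ and, by the regularity theory for the CIR/Heston generator as in \cite{A-MC}, smooth up to $\{y=0\}$; by Lemma~\ref{u-pde} it solves $\partial_tu_n+\L u_n+\varrho u_n=h_n$, $u_n(T,\cdot)=g_n$. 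Differentiating this PIDE in $y$, and using that $\L_{\mbox{{\tiny int}}}$ (which shifts only the $x$-variable, with the constant coefficient $\gamma_X$) commutes with $\partial_y$, one finds that $w_n:=\partial_yu_n$ solves
$$
\partial_tw_n+\L^{*}w_n+(\varrho-\kappa)w_n=\partial_yh_n-\tfrac12\partial^2_xu_n-\mathfrak{b}\,\partial_xu_n,\qquad w_n(T,\cdot)=\partial_yg_n,
$$
where $\L^{*}$ is the generator of \eqref{SDE} with parameters $\rho_*=\rho$, $\mathfrak{a}_*=\mathfrak{a}+\rho\sigma$, $\mathfrak{b}_*=\mathfrak{b}$, $\kappa_*=\kappa$, $\theta_*=\theta+\frac{\sigma^2}{2\kappa}$, $\sigma_*=\sigma$: the terms $\partial_y(\tfrac y2\partial^2_xu_n)$ and $\partial_y((\mathfrak{a}+\mathfrak{b}y)\partial_xu_n)$ produce the inhomogeneity $\tfrac12\partial^2_xu_n+\mathfrak{b}\partial_xu_n$; $\partial_y(\kappa(\theta-y)\partial_yu_n)$ produces the zeroth-order shift $-\kappa$ and, together with $\partial_y(\tfrac y2\sigma^2\partial^2_yu_n)$, the shift $\theta\rightsquigarrow\theta+\tfrac{\sigma^2}{2\kappa}$; and $\partial_y(\rho\sigma y\,\partial_x\partial_yu_n)$ produces $\mathfrak{a}\rightsquigarrow\mathfrak{a}+\rho\sigma$. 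Feynman--Kac applied to the diffusion $(X^*,Y^*)$ governed by $\L^{*}$ then yields the representation \eqref{u-y} for $u_n$.

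\emph{Step 3 (limit) and the main obstacle.} By the continuity estimates in the proof of Lemma~\ref{lemma_reg} and the bounds \eqref{moments_XY}, $u_n\to u$, $\partial_xu_n\to\partial_xu$, $\partial^2_xu_n\to\partial^2_xu$ pointwise with polynomial control uniform in $n$, so the right-hand side of \eqref{u-y} written for $u_n$ converges, by dominated convergence, to the one written for $u$; since $\partial_yu_n$ then converges, $u$ is differentiable in $y$ with $\partial_yu$ given by \eqref{u-y}, and Lemma~\ref{lemma_reg} applied to $(X^*,Y^*)$ gives $\partial_yu\in\mathcal{C}_{\pol,T}(\bar{\mathcal{O}})$. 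Combined with Step~1, this proves $u\in\mathcal{C}^1_{\pol,T}(\bar{\mathcal{O}})$ and $\partial^2_xu\in\mathcal{C}_{\pol,T}(\bar{\mathcal{O}})$. The hard part is precisely this $y$-differentiation: when the Feller condition fails, $\sqrt{Y}$ is only $\tfrac12$-Hölder and the $Y$-drift degenerates at $0$, so one cannot differentiate the flow naively; the smooth-data detour sidesteps this, but one must still guarantee that the $u_n$ genuinely enjoy the regularity up to $\{y=0\}$ needed to differentiate the PIDE in $y$, and that every bound in the limiting passage is uniform in $n$ --- this is where the CIR-specific estimates of \cite{A-MC} are essential.
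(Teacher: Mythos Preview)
Your treatment of the $x$-derivatives in Step~1 is correct and coincides with the paper's argument: translation invariance of the $X$-dynamics gives \eqref{u-x-xx} directly, and Lemma~\ref{lemma_reg} yields the $C_{\pol,T}$ control.

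For the $y$-derivative, however, your regularization is in the wrong place, and this creates a genuine gap. You smooth the \emph{data} $g,h\rightsquigarrow g_n,h_n$ and then assert that the resulting $u_n$ is smooth up to $\{y=0\}$ ``by the regularity theory for the CIR/Heston generator as in \cite{A-MC}''. But \cite{A-MC} (Proposition~4.1 there, restated as Corollary~\ref{corollary-cir} here) treats only the CIR process, i.e.\ functions of $y$ alone; the corresponding result for the coupled $(X,Y)$ system with an inhomogeneous term is precisely Proposition~\ref{prop-reg-new}, whose proof \emph{uses} the present lemma. So the appeal is circular. Smoothing $g,h$ does nothing to tame the degeneracy of $\sqrt{y}$ at $0$: with the original coefficients one still cannot differentiate the stochastic flow in $y$, and hence has no independent way to certify that $\partial_yu_n$ exists with polynomial growth --- which is exactly the input needed to identify it with the Feynman--Kac candidate. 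You yourself flag this in your last sentence (``one must still guarantee that the $u_n$ genuinely enjoy the regularity up to $\{y=0\}$''), but you do not provide that guarantee.

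The paper resolves this by regularizing the \emph{coefficient} rather than the data: it replaces $\sqrt{|y|}$ by a smooth $\varphi_k\ge 1/k$ and works with the uniformly elliptic diffusion $(X^k,Y^k)$. Now the stochastic flow is differentiable, so the first variation process gives $\partial_yu^k$ explicitly and shows it has polynomial growth --- no a~priori regularity for the degenerate equation is needed. One then differentiates the (regularized) PIDE, applies Feynman--Kac (uniqueness coming from uniform ellipticity), obtains uniform-in-$k$ polynomial bounds because $\varphi_k^2$ is Lipschitz uniformly in $k$, passes to the limit via the pathwise stability results of \cite{BMO}, and identifies the limit with $\partial_yu$ in the sense of distributions. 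A further point you omit: the Feynman--Kac identification for the limiting $(X^*,Y^*)$ relies on the fact that the shifted parameters automatically satisfy the Feller condition $2\kappa_*\theta_*=2\kappa\theta+\sigma^2\ge\sigma^2=\sigma_*^2$, which is what gives uniqueness in Lemma~\ref{u-pde}.
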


\begin{proof}
	First, the stochastic flow w.r.t. $x$ is differentiable (here, $(X^*)^{t,x,y}_s=x+Z^{ t,y}_s$ and $Z^{ t,y}_s$ does not depend on $x$). Hence, by using the polynomial growth hypothesis, by  \eqref{u-g-f} one gets  \eqref{u-x-xx}. Let us prove \eqref{u-y}.
	
	By   Lemma \ref{u-pde} $u$  solves \eqref{PDE-u}. So, setting $v=\partial_y u$, by derivating \eqref{PDE-u} one has
	$$
	\left\{
	\begin{array}{ll}
	\partial_t v+ \L_* v+\varrho_* v= h_*,&\mbox{ in }[0,T)\times\mathcal{O},\\
	v(T,z)= g_*(z),&\mbox{ in }\mathcal{O}.
	\end{array}	
	\right.
	$$
	where $\L_*$ is the infinitesimal generator of $(X^*,Y^*)$ and 
	$\varrho_*=\varrho-\kappa$, $h_*=\partial_y h-\mathfrak{b}\partial_xu-\frac 12\partial^2_xu$, $g_*=\partial_yg$.
	By using \eqref{u-x-xx} and    Lemma \ref{lemma_reg}, $h_*\in C_{\pol, T}(\bar{\mathcal{O}})$. Moreover, the Feller condition $2\kappa_*\theta_*\geq \sigma^2_*$ holds, and by  Lemma \ref{u-pde} the unique solution with polynomial growth in $(x,y)$ to the above PIDE is
	$$
	\bar v(t,x,y)
	=\E^{t,x,y}\Big[e^{\varrho (T-t)} g_*(X^*_T,Y^*_T)-\int_t^Te^{\varrho (s-t)} h_*(s,X^*_s,Y^*_s)ds \Big].
	$$
	In order to identify $\bar v$ with $v=\partial_yu$, one should know that $\partial_yu\in C_{\pol, T}(\mathcal{O})$.  If the diffusion coefficient of $Y$ was more regular, one could use arguments from the stochastic flow. But this is not the case, hence we  use a density argument inspired by \cite{ET}. 
	
	For $k\geq 1$, let $\varphi_k$  be a $C^\infty(\R)$ approximation of $\sqrt{|y|}$ such that $\varphi_k(y)\geq 1/k$, $\varphi_k(y)\to \sqrt{|y|}$ uniformly on the compact sets of $[0,+\infty)$ and $\varphi^2_k$ is Lipschitz continuous uniformly in $k$ (which means that $\varphi_k\varphi'_k$ is bounded uniformly in $k$).  Consider the diffusion process $(X^k,Y^k) $ defined by
	\begin{equation}\label{SDE-n}
	\left\{
	\begin{array}{l}
	dX^k_t=\left( \mathfrak{a}+\mathfrak{b}Y^k_t\right)dt +\varphi_k(Y^k_t)dB_t+dH_t,\\
	dY^k_t=\kappa(\theta-Y^k_t)dt+\sigma\varphi_k(Y^k_t)dW_t, 
	\end{array}
	\right.
	\end{equation}
	whose generator is
	$$
	\L_ku= \frac {\varphi^2_k(y)} 2 \left(\partial^2_x u+ 2\rho\sigma\partial_x\partial_y u  
	+ \sigma^2 \partial^2_y u\right)
	+\left(\mathfrak{a}+\mathfrak{b} y \right)\partial_x u+\kappa(\theta-y)\partial_y u  + \II u.
	$$
	Set
	$$
	u^k(t,x,y)=\E^{t,x,y}
	\Big[e^{\varrho (T-t)} g(X^k_T,Y^k_T)-\int_t^Te^{\varrho (s-t)}h(s, X^k_s, Y^k_s)ds\Big].
	$$
	Le us first show that $\partial_yu^k\in C_{\pol,T}(\mathcal{O})$. Since the diffusion coefficients associated to $(X^k,Y^k)$ are good enough, we can consider the first variation process: by calling $Z^{k,t,x,y}_s=(\partial_yX^{k,t,x,y}_s, \partial_y Y^{k,t,x,y}_s)$, we get
	$$
	\begin{array}{rl}
	\partial_y  u^k(t,x,y)
	=&\E[e^{\varrho (T-t)}\langle \nabla_{x,y}g(X^{k,t,x,y}_T,Y^{k,t,x,y}_T),Z^{k,t,x,y}_T\rangle]\\
	&-\int_t^Te^{\varrho (s-t)}
	\E[\langle\nabla_{x,y}h(s,X^{k,t,x,y}_s,Y^{k,t,x,y}_s), Z^{k,t,x,y}_s\rangle]ds.
	\end{array}
	$$
	The functions $g,h$ and their derivatives have polynomial growth, so
	$$
	\begin{array}{rl}
	\left|\partial_y u^k(t,x,y)\right|
	\leq &\E[C(1+|X^{k,t,x,y}_T|^a+|Y^{k,t,x,y}_T|^a)|Z^{k,t,x,y}_T|]\\
	&+\int_t^Te^{\varrho (s-t)}
	\E[C(1+|X^{k,t,x,y}_s|^a+|Y^{k,t,x,y}_s|^a)|Z^{k,t,x,y}_s|]ds
	\end{array}
	$$
	and the usual $L^p$-estimates give
	$$
	\sup_{t<T}\left|\partial_y  u^k(t,x,y)\right|\leq C_k(1+|x|^{a_k}+y^{a_k}),
	$$
	for suitable constants $C_k,a_k>0$.
	Moreover, from the standard theory of parabolic PIDEs, $u^k$ is a solution to
	$$
	\left\{
	\begin{array}{ll}
	\partial_t  u^k+ \L_k u^k+\varrho u^k= h,&\mbox{ in }[0,T)\times\mathcal{O},\\
	u^k(T,z)= g(z),&\mbox{ in }\mathcal{O}.
	\end{array}	
	\right.
	$$
	By differentiating, $v^k=\partial_y  u^k$ solves the problem
	$$
	\left\{
	\begin{array}{ll}
	\partial_t  v^k+ \L_{k,*} v^k+\varrho_* v^k= h_{k,*},&\mbox{ in }[0,T)\times\mathcal{O},\\
	v^k(T,z)= g_{*}(z),&\mbox{ in }\mathcal{O},
	\end{array}
	\right.	
	$$
	where
	$$
	\begin{array}{rl}
	\L_{k,*}v=
	& \frac {\varphi^2_k(y)} 2 \left(\partial^2_xv+ 2\rho\sigma\partial_x\partial_y v
	+ \sigma^2 \partial^2_yv \right)\\
	&+ \left(\mathfrak{a}+\mathfrak{b}y+2\rho\sigma\varphi_k\varphi'_k(y)\right)\partial_xv
	+\left(\kappa(\theta-y)+\sigma^2\varphi_k\varphi'_k(y)\right)\partial_yv+\II v 
	\end{array}
	$$
	and 
	$
	h_{k,*}=\partial_yh-\mathfrak{b} \partial_x u^k-\varphi_k\varphi'_k(y)\partial^2_x u^k.
	$
	By developing the same arguments as before, we get $h_{k,*}\in C_ {\pol, T}(\bar{\mathcal{O}})$.
	The PIDE for $v^k$ has a unique solution in $C_{\pol,T}(\mathcal{O})$ (recall that, by construction, the second order operator is uniformly elliptic). Thus, the Feynman-Kac formula gives
	$$
	\partial_yu^k(t,x,Y)
	=\E^{t,x,y}\Big[e^{\varrho (T-t)} g_*(X^{k,*}_T,Y^{k,*}_T)-\int_t^Te^{\varrho (s-t)} h_{k,*}(s,X^{k,*}_s,Y^{k,*}_s)ds  \Big],
	$$
	where $(X^{k,*},Y^{k,*})$ is the diffusion with infinitesimal generator given by $\L_{k,*}$.
	Now, the standard $L^p$ estimates for $(X^k,Y^k)$ and $(X^{k,*},Y^{k,*})$ hold uniformly in $k$ (recall that $\varphi_k$ is sublinear uniformly in $k$ and $\varphi_k\varphi'_k$ is bounded uniformly in $k$): for every $p\geq 1$ there exist $C,a>0$ such that
	$$
	\sup_k\sup_{t\leq T}\E^{t,x,y}\left( |X^k_t|^p+|Y^k_t|^p\right)
	+\sup_k\sup_{t\leq T}\E^{t,x,y}\left( |X^{k,*}_t|^p+|Y^{k,*}_t|^p\right)\leq C(1+|x|^a+|y|^a).
	$$
	This gives that
	$\sup_k\sup_{t<T}|u^k(t,x,y)|+\sup_k\sup_{t<T}\left|\partial_y u^k(t,x,y)\right|\leq C(1+|x|^a+|y|^a)$,
	for suitable $C,a>0$ (possibly different from the ones above). Moreover, the stability results in \cite{BMO} give
	$\lim_{k\to\infty}u^k(t,x,y)=u(t,x,y)$ and $\lim_{n\to\infty}\partial_y u^k(t,x,y)$ $=v(t,x,y)$
	for every $(t,x,y)\in [0,T)\times \mathcal{O}$. And thanks to the above uniform polynomial bounds for $u^k$ and $\partial_yu^k$, for every $\phi\in C^\infty (\mathcal{O})$ with compact support we easily get
	$$
	\begin{array}{l}
	\int v(t,x,y) \phi(x,y)dxdy
	=\int \lim_k \partial_y  u^k(t,x,y)\phi(x,y)dxdy\\
	=-\int \lim_k u^k(t,x,y)\partial_y\phi(x,y)dxdy
	=-\int u(t,x,y)\partial\phi(x,y)dxdy.
	\end{array}
	$$
	Therefore, $v(t,x,y)=\partial_y u(t,x,y)$ in $[0,T)\times \mathcal{O}$. 
\end{proof}

We can now prove the result which this section is devoted to.

\begin{proof}[Proof of   Proposition \ref{prop-reg-new}]
	We follow an induction on $q$. If $q=0$, 	  Lemma \ref{lemma_reg} gives the result. Suppose the statement is true up to $q-1\geq 0$ and let us prove it for $q$. 
	
	Take $f$ such that	$\partial_x ^{2j}f\in C^{p,q-j}_{\pol}(\R,\R_+)$ for every $j=0,1,\ldots,q$. Then, by induction, $\partial^{l}_t\partial^m_x\partial^n_y u\in C^{p,0}_{\pol,T}(\R,\R_+)$ when $2l+m+n\leq q-1$. So, we just need to prove that $\partial^l_t\partial^m_x\partial^n_y u\in C^{p,0}_{\pol, T}(\R,\R_+)$ for any $l,m,n$ such that $2l+m+n=q$. 
	
	Assume first $l=0$. For $n=0$, we use that $X_T^{t,x,y}=x+Z_T^{t,y}$ and we get
	$
	\partial^{m}_xu(t,x,y)=\E^{t,x,y}\big[\partial^m_xf(X_T,Y_T)\big].
	$
	Since $\partial^m_xf\in C^{p,0}_{\pol}(\R,\R_+)$ for any $m\leq 2q$,  by   Lemma \ref{lemma_reg} we obtain  $\partial^{m}_xu\in C^{p,0}_{\pol,T}(\R,\R_+)$	for every $m\leq 2q$.  
	
	Fix now $n>0$ and $m\geq 0$.   Recursively applying   Lemma \ref{lemma-reg}, we get  formula \eqref{stoc_repr-new}. Let us stress that,
	because of the presence of the derivatives $\partial^{m+2}_x\partial^{n-1}_yu$ and $\partial^{m+1}_x\partial^{n-1}_yu$ in \eqref{stoc_repr-new}, the recursively application of  Lemma \ref{lemma-reg} gives the constraint $m+2n\leq q$. 
	Then, by   Lemma \ref{lemma_reg}, it follows that $\partial^{m}_x\partial^{n}_yu \in C^{p,0}_{\pol, T}(\R,\R_+)$	for every $m, n\in \N$ such that $m+2n\leq 2q$, and in particular when $m+n=q$.
	
	Consider now the case $l>0$. By \eqref{stoc_repr-new},   Lemma \ref{u-pde} ensures that if $m+2n\leq 2q$ then $u_{n,m}=\partial^{m}_x\partial^{n}_y u$ solves
	\begin{equation}\label{pdeit-new}
	\left\{
	\begin{array}{ll}
	\partial_t u_{m,n}+ \L_{n}u_{m,n}-n\kappa u_{m,n}=-n \big[\frac 12 u_{m+2,n-1}+ \mathfrak{b}u_{m+1,n-1}\big], & \mbox{in }[0,T)\times\mathcal{O},\\
	u_{m,n}(T,x,y)=\partial^m_x\partial^n_y f(x,y), & \mbox{in }\mathcal{O},
	\end{array}
	\right.
	\end{equation}
	where $\L_n$ is the generator in \eqref{app-L} with the (new) parameters in \eqref{parameters-new}.
	Therefore, the general case concerning  $\partial^l_t\partial^m_x\partial^n_y u$ with $2l+m+n=q$ follows by an iteration on $l$: by \eqref{pdeit-new},
	$$
	\begin{array}{l}
	\partial^{l}_t\partial^{m}_x\partial^n_y u\\
	=-\L_n\partial^{l-1}_t\partial^{m}_x\partial^n_y u
	+n\kappa \partial^{l-1}_t\partial^{m}_x\partial^n_y u
	-n[\frac 12\partial^{l-1}_t\partial^{m+2}_x\partial^{n-1}_y u+\mathfrak{b}\partial^{l-1}_t\partial^{m+1}_x\partial^{n-1}_y u].
	\end{array}
	$$
\end{proof}

	\appendix
	
	\section{Lattice properties of the CIR approximating tree}\label{app-jumps}
	The aim of this section is to prove  Propostition \ref{propjumps}. 
	For later use, let us first give some (trivial) properties of the lattice. First, by construction, $k_d(n,k)\leq k<k_u(n,k)$, so that
	$
	y^{n+1}_{k_d(n,k)}\leq y^{n+1}_k\leq y^n_k\leq y^{n+1}_{k+1}\leq y^{n+1}_{k_u(n,k)}.
	$
	Moreover for every $n$ and $k$, it is easy to see that
	\begin{equation}\label{tree1}
	\begin{array}{c}
	y_k^n\leq y_{k+1}^n,
	\quad y^{n+1}_{k}\leq y^n_k\leq y^{n+1}_{k+1},\smallskip\\
	\displaystyle
	y^n_k\leq y  ^n_{k-1}+\sigma^2h+2\sigma\sqrt{y^n_{k-1}h},\quad
	y^{n+1}_k\leq y^n_k+\frac{\sigma^2} 4\,h-\sigma\sqrt{y^n_k h}.
	\end{array}
	\end{equation}
	
	\medskip
	
	\noindent
	\textit{Proof of Proposition \ref{propjumps}.}
	1. The statement is an immediate consequence of the following facts:
	\begin{align}
	&\mbox{if $k_u(n,k)\geq k+2$, then $y^n_k < \theta_*h$,}\label{ass1}\\
	&\mbox{if $k_d(n,k)\leq k-1$, then $y^n_k > \theta^*/h$,}\label{ass2}
	\end{align}
	which we now prove.
	
	First of all, note that $y^n_k+\mu_Y(y^n_k)h= \kappa\theta h +y^n_k(1-\kappa h)$, so by choosing $\bar h = 1/ \kappa$, 	one has 	$y^n_k+\mu_Y(y^n_k)h >0$. Moreover, as   a direct consequence of \eqref{ku}--\eqref{kd} and of \eqref{tree1}, we have that, if $\mu_Y(y^n_k)>0$, then $k_d(n,k)=k$, and if $\mu_Y(y^n_k)<0$, then $k_u(n,k)=k+1$.
	
	Concerning \eqref{ass1}, we obviously assume $y^n_k>0$, so that  $y^{n+1}_{k+1}>0.$ Note that, from \eqref{ku},
	\begin{align*}
	y^n_k+\mu_Y(y^n_k)h > y^{n+1}_{k_u(n,k)-1}\geq y^{n+1}_{k+1}=y^n_k+\frac{\sigma^2}{4}h +\sigma \sqrt{y^n_k h }.
	\end{align*}
	Since $\mu_Y(y^n_k) \leq \kappa\theta$,  we get
	$
	\kappa\theta h > \frac{\sigma^2}{4}h +\sigma \sqrt{y^n_k h }>\sigma \sqrt{y^n_k h},
	$
	from which
	$
	y^n_k < \Big( \frac{\kappa\theta}{\sigma}\Big)^2	h=\theta_*h
	$, and \eqref{ass1} holds.
	
	We prove now \eqref{ass2}.  First of all observe that, if $y^n_k \leq \theta$, then $\mu_Y(y^n_k) >0$ and so $k_d(n,k)=k$. Then we have $y^n_k >\theta$ and from \eqref{vnk} we can assume   $y^{n+1}_k>0$ up to take  $h< ( 2 \sqrt{\theta}/\sigma)^2$.
	Now, by \eqref{kd} we get
	\begin{align*}
	y^n_k+\mu_Y(y^n_k)h < y^{n+1}_{k_d(n,k)+1}\leq  y^{n+1}_{k}=y^n_k+\frac{\sigma^2}{4}h -\sigma \sqrt{y^n_k h },
	\end{align*}
	so that
	$$
	\kappa(\theta-y^n_k)h 
	< \frac{\sigma^2}{4}h -\sigma \sqrt{y^n_k h }.
	$$
	This gives $\kappa y^{n}_k h> \sigma\sqrt{v^{n}_k h} -\frac{\sigma^{2}}{4}\, h +\kappa\theta h$ and, for $h$ small enough, $y^n_k h>\frac{\sigma^2}{4\kappa^2}$, that is, \eqref{ass2} holds.
	
	\smallskip

	2. If $y^n_k \leq \theta_*h$, \eqref{ass2} gives $k_d(n,k)= k$. As regards the up jump, the case $y^{n+1}_{k_u(n,k)}= 0$ is trivial so we consider $y^{n+1}_{k_u(n,k)}>0$.
	In order to prove \eqref{stimasottosoglia}, we consider two possible cases: $k_u(n,k)=k+1$
	and $k_u(n,k)\geq k+2$. In the first case, we have
	\begin{align*}
	y^{n+1}_{k_u(n,k)}-y^n_k= \frac{\sigma^2}{4}h +\sigma \sqrt{y^n_k h }\leq \Big(   \frac{\sigma^2}{4}+\sigma \sqrt{\theta_*}  \Big)h\leq C_\ast h,
	\end{align*}
	and the statement holds. If instead $k_u(n,k)\geq k+2$, then by \eqref{ku} we have
	$$
	y^{n+1}_{k_u(n,k)-1} - y^n_k <\mu_Y(y^n_k)h.
	$$
	We apply the third inequality in \eqref{tree1} (with $n$ replaced by $n+1$ and $k=k_u(n,k)$) and we get
	\begin{align*}
	&0\leq y^{n+1}_{k_u(n,k)}-y^n_k\leq y^{n+1}_{k_u(n,k)-1}+2\sigma \sqrt{y^{n+1}_{k_u(n,k)-1}h } +\sigma^2h-y^n_k \\
	& \quad \leq \mu_Y(y^n_k)h+2\sigma \sqrt{(y^n_k +\mu_Y(y^n_k)h)h}+\sigma^2h
	\leq (\kappa\theta + 2\sigma\sqrt{ \theta_*+\kappa\theta}+\sigma^2)h\leq C_\ast h.
	\end{align*}
	
	3. The statement follows from \eqref{ass1}.
	
	\smallskip
	
	4. Formula \eqref{proba} is proved once we show that the sets $K_u(n,k)=\{k^*\,:\, k+1\leq k^*\leq n+1 \mbox{ and }y^n_k+\mu_Y(y^n_k)h \le y^{n+1}_{ k^*}\}$ and $K_d(n,k)=\{k^*\,:\, 0\leq k^*\leq k \mbox{ and }y^n_k+\mu_Y(y^n_k)h \ge y^{n+1}_{ k^*}\}$ are nonempty. 
	Indeed, 
	if 	$y^n_k> \theta_*h$ then $k_u=k+1$, so $K_u(n,k)\neq \emptyset$.	And if $y^n_k<\theta_*h$,
	\begin{align*}
	y^{n+1}_{n+1}-y^n_k-\mu_Y(y^n_k)h \geq Y_0 -\theta_*h-\kappa\theta h =Y_0 -(\theta_*+\kappa\theta )h>0
	\end{align*}
	for $h< Y_0/(\theta_*+\kappa\theta)$, which gives $k_u(n,k)<n+1$. Therefore   $K_u(n,k)\neq \emptyset$ for every $(n,k)$.
	As regards $K_d(n,k)$, if $y^n_k< \theta^*/h$ then $k_d(n,k)=k$ by Proposition \ref{propjumps}, so that $K_d(n,k)\neq \emptyset$. If instead $y^n_k\geq \theta^*/h$, then
	$$
	y^{n+1}_0-y^n_k-\mu_Y(y^n_k)h\leq Y_0-\frac {\theta^*} h -\kappa\theta h +\kappa y^n_kh\leq Y_0-\frac {\theta^*} h  +\kappa y^n_kh .
	$$
	Recalling that $h=\frac T N$, we note that there exists $C>0$ such that
	\begin{align*}
	y^n_k h\leq y^N_Nh= \Big(\sqrt {Y_0}+\frac{\sigma} 2N\sqrt{h}\Big)^2h= \Big(\sqrt {Y_0}\sqrt{\frac T N }+\frac{\sigma} 2T  \Big)^2\leq C.
	\end{align*}
	Therefore
	$$
	y^{n+1}_0-y^n_k-\mu_Y(y^n_k)h\leq Y_0-\frac {\theta^*} h+\kappa C <0
	$$
	for $h<\frac{\theta^*  }{Y_0+\kappa C}$. So, $K_d(n,k)\neq \emptyset$. Now, by \eqref{ku} and \eqref{kd}, since $K_u(n,k)\neq \emptyset$ and $K_d(n,k)\neq \emptyset$,
$$
	0\leq \frac{\mu_Y(y^n_k)h+ y^n_k-y^{n+1}_{k_d(n,k)} }{y^{n+1}_{k_u(n,k)}-y^{n+1}_{k_d(n,k)}}= 1+ \frac{\mu_Y(y^n_k)h+ y^n_k-y^{n+1}_{k_u(n,k)} }{y^{n+1}_{k_u(n,k)}-y^{n+1}_{k_d(n,k)}}\leq 1.
$$
	\begin{flushright}
		$\square$
	\end{flushright}

\section{Proof of \eqref{lemma-poisson}}\label{SU-Poisson}
For $x\in\R$, let $\lfloor x\rfloor=\sup\{k\in \Z\,:\,k\leq x\}$ denote the integer part. For $N\in\N$, straightforward computations give
$$
\sum_{|n|\leq N}\varphi(n)=\frac 12 (\varphi(N)+\varphi(-N))+\int_{-N}^{N}\varphi(x)dx+\int_{-N}^{N}\Big(x-\lfloor x\rfloor-\frac 12\Big)\varphi'(x)dx.
$$	
We recall that $\varphi(\pm N)\to 0$ as $N\to\infty$ (because $\varphi,\varphi'\in L^1(\R,dx)$). Moreover, 
the Fourier series representation gives
$$
x-\lfloor x\rfloor-\frac 12=\sum_{n\in\Z,n\neq 0}\frac {e^{-2\pi\ii n x}}{2\pi\ii n},\quad x\in\R.
$$
So,
\begin{align*}
\sum_{n\in\Z}\varphi(n)
&=\int_\R \varphi(x)dx+\int_\R\sum_{n\in\Z,n\neq 0}\frac {e^{-2\pi\ii n x}}{2\pi\ii n}\varphi'(x)dx.
\end{align*}
With $\mathfrak{F}[\cdot]$ denoting the Fourier transform, we have $\int_\R e^{-2\pi\ii n x}\varphi'(x)dx=\mathfrak{F}[\varphi'](2\pi n)=2\pi \ii n\mathfrak{F}[\varphi]$ $(2\pi n)$ and $|\mathfrak{F}[\varphi'](2\pi n)|\leq |\frac{\mathfrak{F}[\varphi''](2\pi n)}{2\pi n}|\leq \frac M n$ because $\varphi''\in L^1(\R,dx)$. Thus, we can put the sum outside the integral and 
the statement holds.

	\addcontentsline{toc}{section}{References}
	
	\small


\begin{thebibliography}{}
		
		
		
		\bibitem{ads}
		\textsc{E. Aky\i ld\i r\i m, Y. Dolinsky, H.M.  Soner},   Approximating stochastic volatility by recombinant trees, \textit{Ann. Appl. Probab.}, \textbf{24} (2014), pp. 2176--2205.	
		
		\bibitem{A-MC}
		\textsc{A. Alfonsi}, On the discretization schemes for the CIR (and Bessel squared)
		processes, \textit{Monte Carlo Methods Appl.}, \textbf{11} (2005), pp. 355--467.
		
		\bibitem{A}
		\textsc{A. Alfonsi}, High order discretization schemes for the CIR process: Application to affine term structure and Heston models, \textit{ Math. Comp.}, \textbf{79}  (2010), pp. 209--237. 
		
		\bibitem{AN}
		\textsc{M. Altmayer, A. Neuenkirch},  Discretising the Heston model: an analysis of the weak convergence rate, \textit{IMA J. Numer. Anal.}, \textbf{37}  (2017), pp. 1930--1960. 
		
		\bibitem{and}
		\textsc{L. Andersen}, Simple and efficient simulation of the Heston stochastic volatility model, \textit{J. Comput. Finance}, \textbf{11} (2008), pp. 1--42.
		
		\bibitem{acz} \textsc{E. Appolloni, L. Caramellino, A. Zanette},  A robust tree method
		for pricing American options with CIR stochastic interest rate, {\it IMA J. Manag. Math.}, \textbf{26} (2015), pp. 345--375.
		

		\bibitem{BMO}	
		\textsc{K. Bahlali, B. Mezerdi, Y. Ouknine},  Pathwise uniqueness and approximation of solutions of stochastic differential equations. in: S\'eminaire de Probabilit\'es XXXII,  Lecture Notes in Math., vol. 1686, Springer, Berlin 1998, pp. 166--187.
		


\bibitem{br}
\textsc{V. Bally, C. Rey}, Approximation of Markov semigroups in total variation distance, \textit{Electron. J. Probab.},
\textbf{21}  (2016), no. 12, 44 pp.

		\bibitem{bt} 
\textsc{V. Bally, D. Talay},  The law of the Euler scheme for stochastic differential equations. I. Convergence rate of the
distribution function, \textit{Probab. Theory Related Fields}, \textbf{104}  (1996), pp. 43--60.

		\bibitem{bates}  
		\textsc{D.S. Bates}, Jumps and stochastic volatility: exchange
		rate processes implicit in Deutsch mark options,  \emph{Rev. Fin.}, \textbf{9} (1996), pp. 69--107
		
		


		\bibitem{bossy}
\textsc{M. Bossy, H. Olivero}, 
Strong convergence of the symmetrized Milstein scheme for some CEV-like SDEs,
\textit{Bernoulli}, \textbf{24} (2018), pp. 1995--2042.	
	
		\bibitem{bcz}
		\textsc{M. Briani, L. Caramellino, A. Zanette},   A hybrid approach for the implementation of the Heston model, \textit{IMA J. Manag. Math.}, \textbf{28}  (2017), pp. 467--500.
		
		\bibitem{bcz-hhw}
		\textsc{M. Briani,  L. Caramellino, A. Zanette}, 
		A hybrid tree/finite-difference approach for Heston-Hull-White type models.
		\textit{J. Comput. Finance}, \textbf{21} (2017), pp. 1--45.
		
		\bibitem{bctz}
		\textsc{M. Briani,  L. Caramellino, G. Terenzi, A. Zanette}, 
		On a hybrid method using trees and finite-difference for pricing options in complex models,
		Preprint \texttt{ArXiv:1603.07225},  2017.
		
		\bibitem{bm}
		\textsc{D. Brigo, F. Mercurio}, \textit{Interest Rate Models: Theory and Practice}, Springer 2001.
		
				\bibitem{ckmz}  
		\textsc{C. Chiarella, B. Kang, G. Meyer, A. Ziogas},  The evaluation of American option prices under stochastic
		volatility and jump-diffusion dynamics using the method of lines, \textit{Int. J. Theor.  Appl. Finan.},  \textbf{12} (2009), pp. 393--425.
		
		\bibitem{cgmz}
		\textsc{M. Costabile, M. Gaudenzi, I. Massab\`o, A. Zanette}, Evaluating fair premiums of equity-linked
		policies with surrender option in a bivariate model, \textit{Insurance Math. Econom.},  \textbf{45}  (2009), pp. 286--295.
		
		\bibitem{cir}
		\textsc{J.C. Cox, J.E. Ingersoll, S.A. Ross}, A Theory of the Term Structure of Interest Rates, \textit{Econometrica}, \textbf{53} (1985), pp. 385--407. 
		
		
		\bibitem{dps}
		\textsc{D. Duffie, J. Pan, K. Singleton}, Transform analysis and asset pricing for affine jump-diffusions, \textit{Econometrica}, \textbf{68} (2000), pp. 1343--1376.
		
		\bibitem{ET}
		\textsc{E. Ekstrom, J. Tysk}, The Black and Scholes equation in stochastic volatility models.
		\textit{J. Math. Anal. Appl.}, 368(2) (2010), 498--507.
		
		
		\bibitem{GM}
		\textsc{M.G. Garroni, J.L. Menaldi},  \textit{Green Functions for Second Order Parabolic Integro-Differential Problems}, Pitman Research Notes in Mathematics Series, vol. 275, 1993. 
		
		
		\bibitem{heston}
		\textsc{S.L. Heston},  A Closed-Form Solution for Options with Stochastic Volatility with Applications to Bond and Currency Options. \textit{The Review of Financial Studies}, \textbf{6} (1993), pp. 327--343.
		
		\bibitem{HST}
		\textsc{J.E. Hilliard, A.L. Schwartz, A.L. Tucker}, Bivariate binomial pricing with generalized interest
		rate processes, \textit{J. Financ. Res.}, \textbf{XIX-4}  (1996), pp. 585--602.
		
		\bibitem{it}
		\textsc{A. Itkin}, Efficient Solution of Backward Jump-Diffusion
		PIDEs with Splitting and Matrix Exponentials,  \textit{J. Comput. Finance},
		\textbf{19} (2016), pp. 29--70.
		
		\bibitem{kou}
		\textsc{S.G. Kou}, A Jump-Diffusion Model for Option Pricing, \textit{Management Science}, \textbf{48}  (2002), pp. 1086--1101.
		
		
		\bibitem{lt}
		\textsc{D. Lamberton, G. Terenzi}, Variational formulation of American option prices in the Heston model. \texttt{ArXiv:1711.11311}, 2017
		
		\bibitem{Lax}
		\textsc{P.D. Lax, R.D. Richtmyer},
		Survey of the stability of linear finite difference equations, \emph{Commun. Pure Appl. Math.}, \textbf{9}  (1956),  267--293.

		\bibitem{mer} 
		\textsc{R.C. Merton}, Option pricing when underlying stock returns
		are discontinuous, \textit{J. Financial Econom.}, \textbf{3}  (1976), pp. 125--144.
		
		\bibitem{MP} 
		\textsc{R. Mikulevicius, H. Pragarauskas}, On Cauchy-Dirichlet problem in half-space for linear integro-differential equations in weighted H\"older spaces, \textit{Electron. J. Probab.}, \textbf{10}  (2004), pp. 1398--1416.
		
		
		
		\bibitem{nr} \textsc{D.B. Nelson, K.  Ramaswamy}, Simple binomial processes as diffusion approximations in financial  models. \textit{The Review of Financial Studies},  \textbf{3}  (1990), pp. 393--430.
		
		\bibitem{nv}
		\textsc{H. Nieuwenhuis, M. Vellekoop}, A tree-based method to price American Options in the Heston Model, \textit{J.  Comput. Finance}, \textbf{13}  (2009), 1--21.
		
		 \bibitem{PP}
		 \textsc{G. Pag\`es, J. Printems},  Functional quantization for numerics with an application to option pricing, \textit{Monte Carlo Methods Appl.},  \textbf{11}  (2005), 407--446. 
		 
		 \bibitem{SV}
		\textsc{D.W. Stroock, S.R.S. Varadhan}, Multidimensional Diffusion Processes,  Springer, Berlin 1979.
		
		
		\bibitem{Tian}
		\textsc{Y. Tian},  A reexamination of lattice procedures for interest rate-contingent claims, \textit{ Adv. Futures Options
			Res.},  \textbf{7}  (1994), pp. 87--110.
		
		\bibitem{t} 
		\textsc{J. Toivanen},  A Componentwise Splitting Method for Pricing American Options
		Under the Bates Model, {\it Applied and Numerical Partial Differential Equations}, \textbf{15}  (2010), pp. 213--227.
		
		\bibitem{W}
		\textsc{J. Wei}, 
		Valuing American equity options with a stochastic interest rate: a note. \textit{J. Financ. Eng.}, \textbf{2}  (1996), pp. 195--206.

		\bibitem{z}
		\textsc{C. Zheng}, Weak convergence rate of a time-discrete scheme for the Heston stochastic volatility model, \textit{SIAM J. Numer. Anal.}, \textbf{55}  (2017), pp. 1243--1263.
		
	\end{thebibliography}
\end{document}